\title{MAGMA: Multi-level accelerated gradient mirror descent algorithm for large-scale convex composite minimization}
\author{Vahan Hovhannisyan \and Panos Parpas \and Stefanos Zafeiriou}
\newcommand{\Tau}{\mathcal{T}}
\DeclareMathOperator*{\dom}{dom}
\DeclareMathOperator*{\prox}{prox}
\DeclareMathOperator*{\argmin}{arg\,min}
\DeclareMathOperator*{\Prog}{Prog}
\DeclareMathOperator*{\Mirr}{Mirr}
\DeclareMathOperator*{\D}{D}
\newtheorem{theorem}{Theorem}
\newtheorem{lemma}{Lemma}
\newtheorem{definition}{Definition}
\newtheorem{remark}{Remark}
\newtheorem{assumption}{Assumption}
\begin{document}

\maketitle

\begin{abstract}
Composite convex optimization models arise in several applications, and are especially prevalent in inverse problems with a sparsity inducing norm and in general convex optimization with simple constraints. The most widely used algorithms for convex composite models are accelerated first order methods, however they can take a large number of iterations to compute an acceptable solution for large-scale problems. In this paper we propose to speed up first order methods by taking advantage of the structure present in many applications and in image processing in particular. Our method is based on multi-level optimization methods and exploits the fact that many applications that give rise to large scale models can be modelled using varying degrees of fidelity. We use Nesterov's acceleration techniques together with the multi-level approach to achieve an $\mathcal{O}(1/\sqrt{\epsilon})$ convergence rate, where $\epsilon$ denotes the desired accuracy. The proposed method has a better convergence rate than any other existing multi-level method for convex problems, and in addition has the same rate as accelerated methods, which is known to be optimal for first-order methods. {Moreover}, as our numerical experiments show, on large-scale face recognition problems our algorithm is several times faster than the state of the art.
\end{abstract}

\section{Introduction}

Composite convex optimization models consist of the optimization of a convex function with Lipschitz continuous gradient plus a non-smooth term. They arise in several applications, and are especially prevalent in inverse problems with a sparsity inducing norm and in general convex optimization with simple constraints (e.g. box or linear constraints). Applications include signal/image reconstruction from few linear measurements (also referred to as compressive or compressed sensing) \cite{candes2006compressive,candes2006robust,candes2008introduction,donoho2006compressed}, image super-resolution \cite{yang2010image}, data classification \cite{wright2010sparse,wright2009robust}, feature selection \cite{saeys2007review}, sparse matrix decomposition \cite{candes2006robust}, trace norm matrix completion \cite{candes2009exact}, image denoising and deblurring \cite{engl1996regularization}, to name a few.

Given the large number of applications it is not surprising that several specialized algorithms have been proposed for convex composite models. Second order methods outperform all other methods in terms of the number of iterations required. Interior point methods \cite{bertsekas1999nonlinear}, Newton and truncated Newton methods \cite{kim2007interior}, primal and dual Augmented Lagrangian methods \cite{yang2013fast} have all been proposed. However, unless the underlying application posses some specific sparsity pattern second order methods are too expensive to use in applications that arise in practice. As a result the most widely used methods are first order methods. Following the development of accelerated first order methods for the smooth case \cite{nesterov1983method} and the adaptation of accelerated first order methods to the non-smooth case (\cite{beck2009fast,nesterov2013gradient}) there has been a large amount of research in this area. State of the art methods use only first order information and as a result (even when accelerated) take a large number of iterations to compute an acceptable solution for large-scale problems. Applications in image processing can have millions of variables and therefore new methods that take advantage of problem structure are needed.

We propose to speed up first order methods by taking advantage of the structure present in many applications and in image processing in particular. The proposed methodology uses the fact that many applications that give rise to large scale models can be modelled using varying degrees of fidelity. The typical applications of convex composite optimization include dictionaries for learning problems, images for computer vision applications, or discretization of infinite dimensional problems  appearing in image processing \cite{aubert2006mathematical}. First order methods use a quadratic model with first order information and the Lipschitz constant to construct a search direction. We propose to use the solution of  a low dimensional (not necessarily quadratic) model to compute a search direction. The low dimensional model is not just a low fidelity model of the original model but it is constructed in a way so that is consistent with the high fidelity model. 

The method we propose is based on multi-level optimization methods. A multi-level method for solving convex infinite-dimensional optimization problems was introduced in \cite{braess1999multigrid} and later further developed by Nash in \cite{nash2000multigrid}. Although \cite{nash2000multigrid} is one of the pioneering works that uses a multi-level algorithm in an optimization context, it is rather abstract and only gives a general idea of how an optimization algorithm can be used in a multi-level way. The author proves that the algorithm converges under certain mild conditions. Extending the key ideas of Nash's multi-level method in \cite{wen2009line}, Wen and Goldfarb used it in combination with a line search algorithm for solving discretized versions of unconstrained infinite-dimensional smooth optimization problems. The main idea is to use the solution obtained from a coarse level for calculating the search direction in the fine level. On each iteration the algorithm uses either the negative gradient direction on the current level or a direction obtained from coarser level models. They prove that either search direction is a descent direction using Wolfe-Armijo and Goldstein-like stopping rules in their backtracking line search procedure. Later a multi-level algorithm using the trust region framework was proposed in \cite{gratton2008recursive}. In all those works the objective function is assumed to be smooth, whereas the problems addressed in this paper are not. We also note that multi-level optimization methods have been applied to PDE optimization for many years. A review of the latter literature appeared in \cite{borzi2009multigrid}.

The first multi-level optimization algorithm covering the non-smooth convex composite case was introduced in \cite{parpas2014multilevel}. It is a multi-level version of the well-known Iterative Shrinkage-Thresholding Algorithm (ISTA) (\cite{beck2009fast}, \cite{chambolle1998nonlinear}, \cite{daubechies2004iterative}), called MISTA. In \cite{parpas2014multilevel} the authors prove R-linear convergence rate of the algorithm and demonstrate in several image deblurring examples that MISTA can be up to 3-4 times faster than state of the art methods. However, MISTA's R-linear convergence rate is not optimal for first order methods and thus it has the potential for even better performance. Motivated by this possible improvement, in this paper we apply Nesterov's acceleration techniques on MISTA, and show that it is possible to achieve $\mathcal{O}(1/\sqrt{\epsilon})$ convergence rate, where $\epsilon$ denotes the desired accuracy. To the best of our knowledge this is the first multi-level method that has an optimal rate. In addition to the proof that our method is optimal, we also show in numerical experiments that, for large-scale problems it can be up to $10$ times faster than the current state of the art. 

{One very popular recent approach for handling very large scale problems are stochastic coordinate descent methods \cite{fercoq2015accelerated, lin2015accelerated}. However, while being very effective for $\ell_1$-regularized least squares problems in general, stochastic gradient methods are less applicable for problems with highly correlated data, such as the face recognition problem and other pattern recognition problems, as well as in dense error correction problems with highly correlated dictionaries \cite{wright2010dense}. We compare our method both with accelerated gradient methods (FISTA) as well as stochastic block-coordinate methods (APCG \cite{fercoq2015accelerated}).} 

Our convergence proof is based on the proof of Nesterov's method given in \cite{allen2014novel}, where the authors showed that optimal gradient methods can be viewed as a linear combination of primal gradient and mirror descent steps. We show that for some iterations (especially far away from the solution) it is beneficial to replace gradient steps with coarse correction steps. The coarse correction steps are computed by iterating on a carefully designed coarse model and a different step-size strategy. Since our algorithm combines concepts from multilevel optimization, gradient and mirror descent steps we call it Multilevel Accelerated Gradient Mirror Descent Algorithm (MAGMA). {The proposed method converges in function value with a faster rate than any other existing multi-level method for convex problems}, and in addition has the same rate as accelerated methods which is known to be optimal for first order methods \cite{nemirovski1982problem}. However, given that we use the additional structure of problems that appear in imaging applications in practice our algorithm is several times faster than the state of the art.

The rest of this paper is structured as follows: in Section \ref{sec:problem statement} we give mathematical definitions and describe the algorithms most related to MAGMA. Section \ref{sec:mult-level algorithm} is devoted to presenting a multi-level model as well as our multi-level accelerated algorithm with its convergence proof. Finally, in Section \ref{sec:experiments} we present the numerical results from experiments on large-scale face recognition problems.

\section{Problem Statement and Related Methods} \label{sec:problem statement}
In this section we give a precise description of the problem class MAGMA can be applied to and discuss the main methods related to our algorithm. 
MAGMA uses gradient and mirror descent steps, but for some iterations it uses a smooth coarse model to compute the search direction. So in order to understand the main components of the proposed algorithm we briefly review gradient descent, mirror descent and smoothing techniques in convex optimization. We also introduce our main assumptions and some notation that will be used later on.

\subsection{Convex Composite Optimization}
Let $f:\mathbb{R}^n\rightarrow (-\infty,\infty]$ be a convex, continuously differentiable function with an $L_f$-Lipschitz continuous gradient:
\begin{equation*}
\Vert \nabla f(\mathbf{x})-\nabla f(\mathbf{y})\Vert_* \leq L_f \Vert \mathbf{x}-\mathbf{y}\Vert,
\end{equation*}
where $\Vert \cdot\Vert_{*}$ is the dual norm of $\Vert \cdot\Vert$ and $g:\mathbb{R}^n\rightarrow (-\infty,\infty]$ be a closed, proper, locally Lipschitz continuous convex function, but not necessarily differentiable. Assuming that the minimizer of the following optimization problem:
\begin{equation} \label{eq:composite}
\min_{\mathbf{x}\in \mathbb{R}^n}\{F(\mathbf{x})=f(\mathbf{x})+g(\mathbf{x})\},
\end{equation}
is attained, we are concerned with solving it.

An important special case of (\ref{eq:composite}) is when {$f(\mathbf{x})=\frac{1}{2}\Vert A\mathbf{x}-\mathbf{b}\Vert_2^2$} and $g(\mathbf{x})=\lambda\Vert\mathbf{x}\Vert_1$, where $\mathbf{A}\in\mathbb{R}^{m\times n}$ is a full rank matrix  with $m<n$, $\mathbf{b}\in\mathbb{R}^m$ is a vector and $\lambda>0$ is a parameter. 
Then (\ref{eq:composite}) becomes
\begin{equation} \label{eq:L1regLS}
\tag{$P_1$}
\min\limits_{\mathbf{x}\in\mathbb{R}^n}{\frac{1}{2}\Vert \mathbf{A}\mathbf{x}-\mathbf{b}\Vert_2^2+\lambda\Vert \mathbf{x}\Vert_1},
\end{equation}
The problem (\ref{eq:L1regLS}) arises from solving underdetermined linear system of equations $\mathbf{A}\mathbf{x}=\mathbf{b}$. This system has more unknowns than equations and thus has infinitely many solutions. A common way of avoiding that obstacle and narrowing the set of solutions to a well-defined one is regularization via sparsity inducing norms \cite{elad2010sparse}.  In general, the problem of finding the sparsest decomposition of a matrix $\mathbf{A}$ with regard to data sample $\mathbf{b}$ can be written as the following non-convex minimization problem:
\begin{equation} \label{eq:l0-norm}
\tag{$P_0$}
\begin{array}{ccc}
\min\limits_{\mathbf{x}\in\mathbb{R}^n}\Vert\mathbf{x}\Vert_{0} & \text{subject to} &  \mathbf{A}\mathbf{x}=\mathbf{b},
\end{array}
\end{equation}
where $\Vert\cdot\Vert_{0}$ denotes the $\ell_0$-pseudo-norm, i.e. counts the number of non-zero elements of $\mathbf{x}$. Thus the aim is to recover the sparsest $\mathbf{x}$ such that $\mathbf{A}\mathbf{x}=\mathbf{b}$. However, solving the above non-convex problem is NP-hard, even difficult to approximate \cite{amaldi1998approximability}. Moreover, less sparse, but more stable to noise solutions are often more preferred than the sparsest one. These issues can be addressed by minimizing the more forgiving $\ell_1$-norm instead of the $\ell_0$-pseudo-norm:
\begin{equation} \label{eq:BP}
\tag{BP}
\begin{array}{ccc}
\min\limits_{\mathbf{x}\in\mathbb{R}^n}\Vert\mathbf{x}\Vert_{1} & \text{subject to} &  \mathbf{A}\mathbf{x}=\mathbf{b}.
\end{array}
\end{equation}
It can be shown that problem (\ref{eq:BP}) (often referred to as Basis Pursuit (BP)) is convex, its solutions are gathered in a bounded convex set and at least one of them has at most $m$ non-zeros \cite{elad2010sparse}. In order to handle random noise in real world applications, it is often beneficial to allow some constraint violation. Allowing a certain error $\epsilon>0$ to appear in the reconstruction we obtain the so-called Basis Pursuit Denoising (BPD) problem:
\begin{equation} \label{eq:BPD}
\tag{BPD}
\begin{array}{ccc}
\min\limits_{\mathbf{x}\in\mathbb{R}^n}\Vert\mathbf{x}\Vert_{1} & \text{subject to} &  \Vert \mathbf{A}\mathbf{x}-\mathbf{b}\Vert_2^2 \leq \epsilon,
\end{array}
\end{equation}
or using the Lagrangian dual we can equivalently rewrite it as an unconstrained, but non-smooth problem (\ref{eq:L1regLS}). Note that problem (\ref{eq:L1regLS}) is equivalent to the well known LASSO regression \cite{tibshirani1996regression}. Often BP and BPD (in both constrained and unconstrained forms) are referred to as $\ell_1$-min problems.

A relatively new, but very popular example of the BPD problem is the so-called dense error correction (DEC) \cite{wright2010dense}, which studies the problem of recovering a sparse signal $\mathbf{x}$ from highly corrupted linear measurements $\mathbf{A}$. It was shown that if $\mathbf{A}$ has highly correlated columns, with an overwhelming probability the solution of (\ref{eq:BP}) is also a solution for (\ref{eq:l0-norm}) \cite{wright2010dense}. One example of DEC is the face recognition (FR) problem, where $\mathbf{A}$ is a dictionary of facial images, with each column being one image, $\mathbf{b}$ is a new incoming image and the non-zero elements of the sparse solution $\mathbf{x}$ identify the person in the dictionary whose image $\mathbf{b}$ is. 

Throughout this section we will continue referring to the FR problem as a motivating example to explain certain details of the theory. In the DEC setting, $\mathbf{A}$ among other structural assumptions, is also assumed to have highly correlated columns; clearly, facial images are highly correlated. This assumption on one hand means that in some sense $\mathbf{A}$ contains extra information, but on the other hand, it is essential for the correct recovery. We propose to exploit the high correlation of the columns of $\mathbf{A}$ by creating coarse models that have significantly less columns and thus can be solved much faster by a multi-level algorithm.

We use the index $H$ to indicate the coarse level of a multi-level method: $\mathbf{x}_H\in\mathbb{R}^{n_H}$ is the coarse level variable, $f_H(\cdot):\mathbb{R}^{n_H}\rightarrow (-\infty,\infty]$ and $g_H(\cdot):\mathbb{R}^{n_H}\rightarrow (-\infty,\infty]$ are the corresponding coarse surrogates of $f(\cdot)$ and $g(\cdot)$, respectively. In theory, these functions only need to satisfy a few very general assumptions (Assumption \ref{ass:coarse-model}), but of course, in practice they should reflect the structure of the problem, so that the coarse model is a meaningful smaller sized representation of the original problem. In the face recognition example creating coarse models for a dictionary of facial images is fairly straightforward: we take linear combinations of the columns of $\mathbf{A}$.

In our proposed method we do not use the direct prolongation of the coarse level solution to obtain a solution for the fine level problem. Instead, we use these coarse solutions to construct search directions for the fine level iterations. Therefore, in order to ensure the convergence of the algorithm it is essential to ensure that the fine and coarse level problems are coherent in terms of their gradients \cite{nash2000multigrid,wen2009line}. However, in our setting the objective function is not smooth. To overcome this in a consistent way, we construct smooth coarse models, as well as use the smoothed version of the fine problem to find appropriate step-sizes\footnote{We give more details about smoothing non-smooth functions in general, and $\Vert\cdot\Vert_1$ in particular, at the end of this section.}. Therefore, we make the following natural assumptions on the coarse model.
\begin{assumption} \label{ass:coarse-model}
For each coarse model constructed from a fine level problem (\ref{eq:composite}) it holds that
\begin{enumerate}
\item The coarse level problem is solvable, i.e. bounded from below and the minimum is attained, and
\item $f_H(\mathbf{x})$ and $g_H(\mathbf{x})$ are both continuous, closed, convex and differentiable with Lipschitz continuous gradients.
\end{enumerate}
\end{assumption}

On the fine level, we say that iteration $k$ is a coarse step, if the search direction is obtained by coarse correction. To denote the $j$-th iteration of the coarse model, we use $\mathbf{x}_{H,j}$.

\subsection{Gradient Descent Methods}
Numerous methods have been proposed to solve (\ref{eq:composite}) when $g$ has a simple structure. By simple structure we mean that its proximal mapping, defined as
\begin{equation*}
\prox\nolimits_{L}(\mathbf{x})=\argmin_{{\mathbf{y}}\in\mathbb{R}^n} \bigg\lbrace \frac{L}{2}\Vert{\mathbf{y}}-\mathbf{x}\Vert^2 + \langle\nabla f(\mathbf{x}),\mathbf{y}-\mathbf{x} \rangle + g(\mathbf{y})\bigg\rbrace,
\end{equation*}
for some $L\in\mathbb{R}$, is given in a closed {form for any $f$}. Correspondingly, denote
\begin{equation*}
\Prog\nolimits_L(\mathbf{x})=-\min_{{\mathbf{y}}\in\mathbb{R}^n} \bigg\lbrace \frac{L}{2}\Vert{\mathbf{y}}-\mathbf{x}\Vert^2 + \langle\nabla f(\mathbf{x}),\mathbf{y}-\mathbf{x} \rangle + g(\mathbf{y}) - g(\mathbf{x}) \bigg\rbrace.
\end{equation*}
We often use the $\prox$ and $\Prog$ operators with the Lipschitz constant $L_f$ of $\nabla f$. In that case we simplify the notation to $\prox(\mathbf{x})=\prox\nolimits_{L_f}(\mathbf{x})$ and $\Prog(\mathbf{x})=\Prog\nolimits_{L_f}(\mathbf{x})$ \footnote{These definitions without loss of generality also cover the case when the problem is constrained and smooth. Also see \cite{nesterov2005smooth} for the original definitions for the unconstrained smooth case.}.

In case of the FR problem (and LASSO, in general) {$\text{prox}(\mathbf{x}-\frac{1}{L}\nabla f(\mathbf{x}))$} becomes the shrinkage-thresholding operator $\Tau_L(\mathbf{x})_i=(\vert \mathbf{x}_i\vert-1/L)_{+} \text{sgn}(\mathbf{x}_i)$ and the iterative scheme is given by,
\begin{equation*}
\mathbf{x}_{k+1}=\Tau_{\lambda/L}\left(\mathbf{x}_k -\frac{2}{L} \mathbf{A}^\top( \mathbf{A}\mathbf{x}_k-\mathbf{b})\right).
\end{equation*}
Then problem (\ref{eq:L1regLS}) can be solved by the well-known Iterative Shrinkage Thresholding Algorithm (ISTA) and its generalizations, see, e.g. \cite{beck2009fast}, \cite{chambolle1998nonlinear}, \cite{daubechies2004iterative}, \cite{figueiredo2003algorithm}. When the stepsize is fixed to the Lipschitz constant of the problem, ISTA reduces to the following simple iterative scheme,
\begin{equation*}
\mathbf{x}_{k+1}=\prox(\mathbf{x}_{k}).
\end{equation*}
If $g(\cdot)\equiv 0$, the problem is smooth, $\prox({\mathbf{x}})\equiv \mathbf{x}_k-\frac{1}{L_f}\nabla f(\mathbf{x}_k)$ and ISTA becomes the well-known steepest descent algorithm with
\begin{equation*}
\mathbf{x}_{k+1}=\mathbf{x}_k-\frac{1}{L_f}\nabla f(\mathbf{x}_k).
\end{equation*}
For composite optimization problems it is common to use the gradient mapping as an optimality measure \cite{nesterov2004introductory}:
\begin{equation*}
\D(\mathbf{x})=\mathbf{x}-\prox(\mathbf{x}).
\end{equation*}
It is a generalization of the gradient for the composite setting and preserves its most important properties used for the convergence analysis.

We will make use of the following fundamental Lemma in our convergence analysis. The proof can be found, for instance, in \cite{nesterov2004introductory}.
\begin{lemma}[Gradient Descent Guarantee] \label{l:gradient-descent-guarantee}
For any $\mathbf{x}\in\mathbb{R}^n$,
\begin{equation*}
F(\prox(\mathbf{x}))\leq F(\mathbf{x})-\Prog(\mathbf{x}).
\end{equation*}
\end{lemma}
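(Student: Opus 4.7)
The plan is to combine the standard descent lemma for functions with $L_f$-Lipschitz gradient with the explicit definition of $\Prog$. Let $\mathbf{y}^* = \prox(\mathbf{x})$ denote the proximal step. By unfolding the definition,
\begin{equation*}
\Prog(\mathbf{x}) = g(\mathbf{x}) - \frac{L_f}{2}\|\mathbf{y}^*-\mathbf{x}\|^2 - \langle \nabla f(\mathbf{x}), \mathbf{y}^*-\mathbf{x}\rangle - g(\mathbf{y}^*),
\end{equation*}
so the target inequality $F(\mathbf{y}^*)\le F(\mathbf{x}) - \Prog(\mathbf{x})$ rearranges to
\begin{equation*}
f(\mathbf{y}^*) \le f(\mathbf{x}) + \langle \nabla f(\mathbf{x}), \mathbf{y}^*-\mathbf{x}\rangle + \frac{L_f}{2}\|\mathbf{y}^*-\mathbf{x}\|^2,
\end{equation*}
after the $g(\mathbf{x})$ and $g(\mathbf{y}^*)$ contributions cancel on both sides. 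The main step is thus simply to establish this last inequality.

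First I would invoke the descent lemma: since $f$ is convex with $L_f$-Lipschitz continuous gradient (in the norm $\|\cdot\|$ with dual $\|\cdot\|_*$), for every $\mathbf{x},\mathbf{y}\in\mathbb{R}^n$ one has $f(\mathbf{y}) \le f(\mathbf{x}) + \langle \nabla f(\mathbf{x}), \mathbf{y}-\mathbf{x}\rangle + \tfrac{L_f}{2}\|\mathbf{y}-\mathbf{x}\|^2$. This is the only nontrivial analytic ingredient, and it is standard (obtained e.g.\ by integrating $\nabla f$ along the segment from $\mathbf{x}$ to $\mathbf{y}$ and applying Cauchy--Schwarz with the Lipschitz bound). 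I would apply it at $\mathbf{y}=\mathbf{y}^*=\prox(\mathbf{x})$.

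Then I would add $g(\mathbf{y}^*)$ to both sides, obtaining $F(\mathbf{y}^*) \le f(\mathbf{x}) + \langle \nabla f(\mathbf{x}), \mathbf{y}^*-\mathbf{x}\rangle + \tfrac{L_f}{2}\|\mathbf{y}^*-\mathbf{x}\|^2 + g(\mathbf{y}^*)$, and rewrite the right-hand side by adding and subtracting $g(\mathbf{x})$: it becomes $f(\mathbf{x}) + g(\mathbf{x}) - \bigl(g(\mathbf{x}) - \tfrac{L_f}{2}\|\mathbf{y}^*-\mathbf{x}\|^2 - \langle \nabla f(\mathbf{x}), \mathbf{y}^*-\mathbf{x}\rangle - g(\mathbf{y}^*)\bigr) = F(\mathbf{x}) - \Prog(\mathbf{x})$, which is exactly the claim.

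There is no real obstacle here; the only mild subtlety is making sure the minimum defining $\prox(\mathbf{x})$ is attained (so that $\mathbf{y}^*$ exists), which follows because the objective in the proximal mapping is strongly convex in $\mathbf{y}$ and $g$ is closed and proper. Note also that the proof does \emph{not} use any optimality of $\mathbf{y}^*$ beyond plugging it into the descent inequality: the $\Prog$ value is, by its very definition, the sharpest possible quantity of this form, so any feasible $\mathbf{y}$ would only give a weaker bound, while using $\mathbf{y}^*$ makes the inequality tight in the sense of matching the definition of $\Prog(\mathbf{x})$.
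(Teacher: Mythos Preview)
Your proof is correct and is exactly the standard argument: the descent lemma for $L_f$-smooth $f$ applied at $\mathbf{y}^*=\prox(\mathbf{x})$, followed by the algebraic identification with $F(\mathbf{x})-\Prog(\mathbf{x})$. The paper does not actually give its own proof of this lemma---it simply cites \cite{nesterov2004introductory}---so there is nothing to compare; your write-up is precisely the kind of derivation one finds in that reference.
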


Using Lemma \ref{l:gradient-descent-guarantee}, it can be shown that ISTA computes an $\epsilon$-optimal solution in terms of function values in $\mathcal{O}(1/\epsilon)$ iterations (see \cite{beck2009fast} and references therein).

\subsection{Mirror Descent Methods}
Mirror Descent methods solve the general problem of minimizing a proper convex and locally Lipschitz continuous function $F$ over a simple convex set $Q$ by constructing lower bounds to the objective function (see for instance \cite{nemirovski1982problem}, \cite{nesterov2009primal}). Central to the definition of mirror descent is the concept of a distance generating function.
\begin{definition}
A function $\omega(\mathbf{x}):Q\rightarrow\mathbb{R}$ is called a Distance Generating Function {(DFG)}, if $\omega$ is $1$-strongly convex with respect to a norm $\Vert\cdot\Vert$, that is
\begin{equation*}
\omega(\mathbf{z})\geq\omega(\mathbf{x})+\langle\nabla\omega(\mathbf{x}),\mathbf{z}-\mathbf{x}\rangle+\frac{1}{2}\Vert \mathbf{x}-\mathbf{z}\Vert^2 \quad\forall \mathbf{x},\mathbf{z}\in Q.
\end{equation*}
Accordingly, the Bregman divergence (or prox-term) is given as
\begin{equation*}
V_\mathbf{x}(\mathbf{z})=\omega(\mathbf{z})-\langle\nabla\omega(\mathbf{x}),\mathbf{z}-\mathbf{x}\rangle-\omega(\mathbf{x}) \quad\forall \mathbf{x},\mathbf{z}\in Q.
\end{equation*}
The property of DFG ensures that $V_\mathbf{x}(\mathbf{x})=0$ and $V_\mathbf{x}(\mathbf{z})\geq\frac{1}{2}\Vert \mathbf{x}-\mathbf{z}\Vert^2\geq 0$.
\end{definition}

The main step of mirror descent algorithm is given by,
\begin{equation*}
\mathbf{x}_{k+1} = \Mirr\nolimits_{\mathbf{x}_k}(\nabla f(\mathbf{x}_k),\alpha), \quad \text{where} \quad \Mirr\nolimits_\mathbf{x}(\mathbf{\xi},\alpha)=\argmin_{\mathbf{z}\in \mathbb{R}^n}\{V_\mathbf{x}(\mathbf{z})+\langle\alpha\mathbf{\xi},\mathbf{z}-\mathbf{x}\rangle + \alpha g(\mathbf{z})\}.
\end{equation*}
Here again, we assume that $\Mirr$ can be evaluated in closed form. Note that choosing $\omega(\mathbf{z})=1/2\Vert \mathbf{z}\Vert_2^2$ as the DFG, which is strongly convex w.r.t. to the $\ell_2$-norm over any convex set and correspondingly $V_\mathbf{x}(\mathbf{z})=\frac{1}{2}\Vert \mathbf{x}-\mathbf{z}\Vert_2^2$, the mirror step becomes exactly the proximal step of ISTA. However, regardless of this similarity, the gradient and mirror descent algorithms are conceptually different and use different techniques to prove convergence. The core lemma for mirror descent convergence is the so called Mirror Descent Guarantee. For the unconstrained\footnote{{Simple constraints can be included by defining $g$ as an indicator function.}} composite setting it is given below, a proof can be found in \cite{nesterov2004introductory}.
\begin{lemma}[Mirror Descent Guarantee] \label{l:mirror-descent-guarantee-composite}
Let $\mathbf{x}_{k+1}=\Mirr\nolimits_{\mathbf{x}_k}(\nabla f(\mathbf{x}_k), \alpha)$, then $\forall \mathbf{u}\in \mathbb{R}^n$
\begin{equation*}
\begin{split}
\alpha(F(\mathbf{x}_k)-F(\mathbf{u})) & \leq \alpha\langle \nabla f(\mathbf{x}_k),\mathbf{x}_k-\mathbf{u}\rangle + \alpha (g(\mathbf{x}_k)-g(\mathbf{u})) \\
 & \leq \alpha^2 L_f\Prog(\mathbf{x}_k) + V_{\mathbf{x}_k}(\mathbf{u})-V_{\mathbf{x}_{k+1}}(\mathbf{u}).
\end{split}
\end{equation*}
\end{lemma}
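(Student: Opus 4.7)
The first inequality is just convexity of $f$: since $f$ is differentiable and convex, $f(\mathbf{x}_k)-f(\mathbf{u})\leq \langle\nabla f(\mathbf{x}_k),\mathbf{x}_k-\mathbf{u}\rangle$; adding $g(\mathbf{x}_k)-g(\mathbf{u})$ to both sides and multiplying by $\alpha>0$ gives the first line. The real work is in the second inequality, and the plan is to exploit the first-order optimality of $\mathbf{x}_{k+1}$ as the minimizer in the definition of $\Mirr_{\mathbf{x}_k}(\nabla f(\mathbf{x}_k),\alpha)$.

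Concretely, optimality yields a subgradient $\tilde g_{k+1}\in\partial g(\mathbf{x}_{k+1})$ with
\begin{equation*}
\nabla\omega(\mathbf{x}_{k+1})-\nabla\omega(\mathbf{x}_k)+\alpha\nabla f(\mathbf{x}_k)+\alpha\tilde g_{k+1}=0.
\end{equation*}
I would split $\alpha\langle\nabla f(\mathbf{x}_k),\mathbf{x}_k-\mathbf{u}\rangle$ as $\alpha\langle\nabla f(\mathbf{x}_k),\mathbf{x}_k-\mathbf{x}_{k+1}\rangle+\alpha\langle\nabla f(\mathbf{x}_k),\mathbf{x}_{k+1}-\mathbf{u}\rangle$, substitute the optimality condition into the second piece, and then invoke the standard three-point (Bregman) identity
\begin{equation*}
\langle\nabla\omega(\mathbf{x}_k)-\nabla\omega(\mathbf{x}_{k+1}),\mathbf{x}_{k+1}-\mathbf{u}\rangle = V_{\mathbf{x}_k}(\mathbf{u})-V_{\mathbf{x}_{k+1}}(\mathbf{u})-V_{\mathbf{x}_k}(\mathbf{x}_{k+1}).
\end{equation*}
The subgradient contribution $-\alpha\langle\tilde g_{k+1},\mathbf{x}_{k+1}-\mathbf{u}\rangle$ is bounded by $\alpha(g(\mathbf{u})-g(\mathbf{x}_{k+1}))$ via convexity of $g$, which cancels the $-\alpha g(\mathbf{u})$ already present. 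After collecting terms I expect to land on
\begin{equation*}
\alpha\langle\nabla f(\mathbf{x}_k),\mathbf{x}_k-\mathbf{u}\rangle+\alpha(g(\mathbf{x}_k)-g(\mathbf{u}))\leq \underbrace{\alpha\langle\nabla f(\mathbf{x}_k),\mathbf{x}_k-\mathbf{x}_{k+1}\rangle+\alpha(g(\mathbf{x}_k)-g(\mathbf{x}_{k+1}))-\tfrac{1}{2}\Vert\mathbf{x}_{k+1}-\mathbf{x}_k\Vert^2}_{=:\;T}+V_{\mathbf{x}_k}(\mathbf{u})-V_{\mathbf{x}_{k+1}}(\mathbf{u}),
\end{equation*}
after using $V_{\mathbf{x}_k}(\mathbf{x}_{k+1})\geq\tfrac12\Vert\mathbf{x}_{k+1}-\mathbf{x}_k\Vert^2$ from $1$-strong convexity of $\omega$.

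The remaining step, which I expect to be the main obstacle, is showing $T\leq \alpha^2 L_f\,\Prog(\mathbf{x}_k)$. The natural idea is to test the definition of $\Prog(\mathbf{x}_k)$ at the point $\mathbf{y}=\mathbf{x}_k+\beta(\mathbf{x}_{k+1}-\mathbf{x}_k)$ with $\beta=\tfrac{1}{\alpha L_f}$ (assuming $\alpha L_f\geq 1$, which holds in the parameter regime of the MAGMA analysis); convexity of $g$ gives $g(\mathbf{y})\leq(1-\beta)g(\mathbf{x}_k)+\beta g(\mathbf{x}_{k+1})$, and substituting into $\Prog$ produces exactly $T/(\alpha^2 L_f)$ after a direct algebraic check. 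Combining this with the Bregman bound above yields the second inequality. The delicate points will be keeping track of the subgradient signs, verifying the three-point identity cleanly, and confirming that the convex-combination test point gives the desired coefficient $\alpha^2 L_f$ on $\Prog(\mathbf{x}_k)$.
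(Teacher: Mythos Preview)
The paper does not actually prove this lemma; it merely states that ``a proof can be found in \cite{nesterov2004introductory}'' and later (in the proof of Lemma~\ref{l:magma-main-lemma}) invokes Lemma~4.2 of \cite{allen2014novel} for the same inequality. So there is no in-paper argument to compare against.

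Your proposal is correct and is precisely the standard route taken in those references: the first inequality is convexity of $f$, and the second follows from (i) the first-order optimality condition of the $\Mirr$ subproblem, (ii) the Bregman three-point identity, (iii) the subgradient inequality for $g$, (iv) the bound $V_{\mathbf{x}_k}(\mathbf{x}_{k+1})\geq\tfrac12\Vert\mathbf{x}_{k+1}-\mathbf{x}_k\Vert^2$, and finally (v) testing $\Prog(\mathbf{x}_k)$ at the convex combination $\mathbf{y}=\mathbf{x}_k+\tfrac{1}{\alpha L_f}(\mathbf{x}_{k+1}-\mathbf{x}_k)$. Your algebraic check on step (v) is right, and the side condition $\alpha L_f\geq 1$ that you flag (needed so that the test point is a genuine convex combination for the bound on $g(\mathbf{y})$) is exactly the regime used throughout the MAGMA analysis, where $\alpha_{k+1}\eta_{k+1}\geq 1$ (equivalently $t_k\leq 1$) is enforced by construction. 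Nothing is missing.
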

Using the Mirror Descent Guarantee, it can be shown that the Mirror Descent algorithm converges to the minimizer $\mathbf{x}^*$ in {$\mathcal{O}(L_F/\epsilon)$}, where $\epsilon$ is the convergence precision \cite{bental2013lectures}.

\subsection{Accelerated First Order Methods}
A faster method for minimizing smooth convex functions with asymptotically tight $\mathcal{O}(1/\sqrt{\epsilon})$ convergence rate \cite{nemirovski1982problem} was proposed by Nesterov in \cite{nesterov1983method}. This method and its variants were later extended to solve the more general problem (\ref{eq:composite}) (e.g. \cite{beck2009fast}, \cite{nesterov2013gradient}, \cite{allen2014novel}), see \cite{tseng2008accelerated} for full review and analysis of accelerated first order methods. The main idea behind the accelerated methods is to update the iterate based on a linear combination of previous iterates rather than only using the current one as in gradient or mirror descent algorithms. The first and most popular method that achieved an optimal convergence rate for composite problems is FISTA \cite{beck2009fast}. At each iteration it performs one proximal operation, then uses a linear combination of its result and the previous iterator for the next iteration.

\begin{algorithm}[H] \label{alg:FISTA}
	\SetAlgoLined
	\KwData{FISTA($f(\cdot)$, $g(\cdot)$, $\mathbf{x}_{0}$)}
	Choose $\epsilon>0$.\\
	Set $\mathbf{y}_1=\mathbf{x}_0$ and $t_1=1$.\\
	\For{$k=0,1,2,...$}{
		Set $	\mathbf{x}_{k+1}=\prox(\mathbf{x}_k)$. \\
		Set $t_{k+1}=\frac{1+\sqrt{1+4t_k^2}}{2}$. \\
		Set $\mathbf{y}_{k+1}=\mathbf{x}_k+\frac{t_k-1}{t_{k+1}}\left( \mathbf{x}_k-\mathbf{x}_{k-1}\right)$. \\
		\If{$\Vert D(\mathbf{x}_k)\Vert_*<\epsilon$}{\Return $\mathbf{x}_k$.}
 	}
	\caption{FISTA}
\end{algorithm}

Alternatively, Nesterov's accelerated scheme can be modified to use two proximal operations at each iteration and linearly combine their results as the current iteration point \cite{nesterov2005smooth}. In a recent work this approach was reinterpreted as a combination of gradient descent and mirror descent algorithms \cite{allen2014novel}. The algorithm is given next.

\begin{algorithm}[H] \label{alg:AGM}
	\SetAlgoLined
	\KwData{AGM($f(\cdot)$, $g(\cdot)$, $\mathbf{x}_{0}$)}
	Choose $\epsilon>0$.\\
	Choose DGF $\omega$ and set $V_\mathbf{x}(\mathbf{y})=\omega(\mathbf{y})-\langle \nabla\omega(\mathbf{x}),\mathbf{y}-\mathbf{x}\rangle-\omega(\mathbf{x}).$\\
	Set $\mathbf{y}_0=\mathbf{z}_0=\mathbf{x}_0$.\\
	\For{$k=0,1,2,...$}{
		Set $\alpha_{k+1}=\frac{k+2}{2L}$ and $ t_k=\frac{2}{k+2}$.\\
		Set $\mathbf{x}_k= t_k \mathbf{z}_k+(1- t_k)\mathbf{y}_k$. \\
		\If{$\Vert D(\mathbf{x}_k)\Vert_*<\epsilon$}{\Return $\mathbf{x}_k$.}
		Set $\mathbf{y}_{k+1}=\prox(\mathbf{x}_k)$. \\
		Set $\mathbf{z}_{k+1}=\Mirr\nolimits_{\mathbf{z}_k}(\nabla f(\mathbf{x}_k),\alpha_{k+1})$. \\
 	}
	\caption{AGM}
\end{algorithm}

The convergence of Algorithm \ref{alg:AGM} relies on the following lemmas.
\begin{lemma} \label{l:AGM-main}
If $ t_k=\frac{1}{\alpha_{k+1} L_f}$ then it satisfies that for every $\mathbf{u}\in\mathbb{R}^n$,
\begin{equation*}
\begin{split}
& \alpha_{k+1}\langle \nabla f(\mathbf{x}_k),\mathbf{z}_k-\mathbf{u}\rangle +\alpha_{k+1}(g(\mathbf{z}_k)-g(\mathbf{u})) \\ 
& \leq \alpha_{k+1}^2 L_f\Prog(\mathbf{x}_k)+V_{\mathbf{z}_k}(\mathbf{u})-V_{\mathbf{z}_{k+1}}(\mathbf{u}) \\
& \leq \alpha_{k+1}^2 L_f(F(\mathbf{x}_k)-F(\mathbf{y}_{k+1}))+V_{\mathbf{z}_k}(\mathbf{u})-V_{\mathbf{z}_{k+1}}(\mathbf{u}).
\end{split}
\end{equation*}
\end{lemma}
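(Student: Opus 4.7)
The statement consists of two inequalities; I would attack them separately.

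The second inequality is immediate from Lemma~\ref{l:gradient-descent-guarantee}. Evaluated at $\mathbf{x}_k$ it gives $F(\mathbf{y}_{k+1}) = F(\prox(\mathbf{x}_k)) \leq F(\mathbf{x}_k) - \Prog(\mathbf{x}_k)$, i.e.\ $\Prog(\mathbf{x}_k) \leq F(\mathbf{x}_k) - F(\mathbf{y}_{k+1})$. Multiplying by the nonnegative factor $\alpha_{k+1}^2 L_f$ and leaving the Bregman difference $V_{\mathbf{z}_k}(\mathbf{u}) - V_{\mathbf{z}_{k+1}}(\mathbf{u})$ untouched on both sides yields the claim.

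For the first inequality I would follow the coupling argument of Allen-Zhu and Orecchia. First-order optimality of $\mathbf{z}_{k+1} = \Mirr_{\mathbf{z}_k}(\nabla f(\mathbf{x}_k), \alpha_{k+1})$ yields a subgradient $\mathbf{s} \in \partial g(\mathbf{z}_{k+1})$ satisfying $\alpha_{k+1}(\nabla f(\mathbf{x}_k) + \mathbf{s}) = \nabla\omega(\mathbf{z}_k) - \nabla\omega(\mathbf{z}_{k+1})$. Pairing with $\mathbf{z}_{k+1} - \mathbf{u}$, using the three-point Bregman identity $\langle \nabla\omega(\mathbf{z}_k) - \nabla\omega(\mathbf{z}_{k+1}), \mathbf{z}_{k+1} - \mathbf{u}\rangle = V_{\mathbf{z}_k}(\mathbf{u}) - V_{\mathbf{z}_{k+1}}(\mathbf{u}) - V_{\mathbf{z}_k}(\mathbf{z}_{k+1})$ together with the subgradient inequality $\langle \mathbf{s}, \mathbf{z}_{k+1} - \mathbf{u}\rangle \geq g(\mathbf{z}_{k+1}) - g(\mathbf{u})$ produces a mirror-descent bound with $\mathbf{z}_{k+1}$ inside the inner product. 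Adding $\alpha_{k+1}\langle \nabla f(\mathbf{x}_k), \mathbf{z}_k - \mathbf{z}_{k+1}\rangle$ to both sides recovers the target left-hand side modulo a residual term.

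The residual $\alpha_{k+1}\langle \nabla f(\mathbf{x}_k), \mathbf{z}_k - \mathbf{z}_{k+1}\rangle - V_{\mathbf{z}_k}(\mathbf{z}_{k+1})$ must be absorbed into $\alpha_{k+1}^2 L_f \Prog(\mathbf{x}_k)$. I would introduce the auxiliary point $\hat{\mathbf{y}} := t_k \mathbf{z}_{k+1} + (1-t_k)\mathbf{y}_k$; since $\mathbf{x}_k = t_k \mathbf{z}_k + (1-t_k)\mathbf{y}_k$, we obtain $\mathbf{x}_k - \hat{\mathbf{y}} = t_k(\mathbf{z}_k - \mathbf{z}_{k+1})$. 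Using strong convexity $V_{\mathbf{z}_k}(\mathbf{z}_{k+1}) \geq \tfrac{1}{2}\|\mathbf{z}_k - \mathbf{z}_{k+1}\|^2$ and the hypothesis $t_k = 1/(\alpha_{k+1} L_f)$, the residual rescales precisely to $\alpha_{k+1}^2 L_f\bigl[\langle \nabla f(\mathbf{x}_k), \mathbf{x}_k - \hat{\mathbf{y}}\rangle - \tfrac{L_f}{2}\|\mathbf{x}_k - \hat{\mathbf{y}}\|^2\bigr]$. Substituting $\hat{\mathbf{y}}$ into the minimum defining $\Prog(\mathbf{x}_k)$ shows this bracket is at most $\Prog(\mathbf{x}_k) + g(\hat{\mathbf{y}}) - g(\mathbf{x}_k)$.

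The main obstacle I expect is the $g$-bookkeeping. Applying convexity of $g$ to the two parallel convex combinations $\mathbf{x}_k = t_k\mathbf{z}_k + (1-t_k)\mathbf{y}_k$ and $\hat{\mathbf{y}} = t_k\mathbf{z}_{k+1} + (1-t_k)\mathbf{y}_k$ makes the $\mathbf{y}_k$ contributions cancel, and the identity $\alpha_{k+1} L_f t_k = 1$ converts the $\alpha_{k+1}^2 L_f$-weighted $g$-difference into an $\alpha_{k+1}$-weighted one. This extra term must exactly merge with the $\alpha_{k+1}(g(\mathbf{z}_{k+1}) - g(\mathbf{u}))$ pulled out of the mirror-descent step so that the final left-hand side matches the stated expression; everything else is routine Bregman manipulation once the coupling relation $\mathbf{x}_k - \hat{\mathbf{y}} = t_k(\mathbf{z}_k - \mathbf{z}_{k+1})$ is in hand.
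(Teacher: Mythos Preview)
The paper does not give a self-contained proof of this lemma; in the proof of Lemma~\ref{l:magma-main-lemma} it simply records that the first inequality ``follows directly from Lemma 4.2 of \cite{allen2014novel}'', and the second inequality is immediate from Lemma~\ref{l:gradient-descent-guarantee}. Your plan is exactly this: the second inequality via the gradient--descent guarantee is correct and matches what the paper implicitly uses, and for the first inequality you reconstruct the Allen-Zhu--Orecchia coupling argument.

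The coupling skeleton you describe is right: the three-point Bregman identity, the auxiliary point $\hat{\mathbf{y}}=t_k\mathbf{z}_{k+1}+(1-t_k)\mathbf{y}_k$, the relation $\mathbf{x}_k-\hat{\mathbf{y}}=t_k(\mathbf{z}_k-\mathbf{z}_{k+1})$, and plugging $\hat{\mathbf{y}}$ into $\Prog$. But the $g$-bookkeeping step does not close as you describe it. You claim that convexity applied to the two parallel combinations $\mathbf{x}_k=t_k\mathbf{z}_k+(1-t_k)\mathbf{y}_k$ and $\hat{\mathbf{y}}=t_k\mathbf{z}_{k+1}+(1-t_k)\mathbf{y}_k$ makes the $g(\mathbf{y}_k)$ contributions cancel. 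However both convexity inequalities are upper bounds,
\[
g(\mathbf{x}_k)\le t_k g(\mathbf{z}_k)+(1-t_k)g(\mathbf{y}_k),\qquad g(\hat{\mathbf{y}})\le t_k g(\mathbf{z}_{k+1})+(1-t_k)g(\mathbf{y}_k),
\]
and subtracting two inequalities pointing the same way yields nothing. If you track the signs in your own chain, after multiplying the $\Prog$ bound by $\alpha_{k+1}^2 L_f$ and merging with the mirror step you are left needing
\[
g(\mathbf{x}_k)+t_k g(\mathbf{z}_{k+1})\;\geq\; g(\hat{\mathbf{y}})+t_k g(\mathbf{z}_k),
\]
which after using convexity at $\hat{\mathbf{y}}$ reduces to the \emph{reverse} Jensen inequality $g(\mathbf{x}_k)\geq t_k g(\mathbf{z}_k)+(1-t_k)g(\mathbf{y}_k)$. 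This is false for convex $g$, so the sketch has a genuine gap precisely at the point you flagged as ``the main obstacle''.

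This gap reflects a real subtlety in the composite version of the statement. With $n=1$, $\omega(x)=x^2/2$, $f(x)=x^2/2$ (so $L_f=1$), $g(x)=|x|$, $\alpha_{k+1}=2$, $t_k=1/2$, $\mathbf{z}_k=10$, $\mathbf{y}_k=-10$, one gets $\mathbf{x}_k=0$, $\nabla f(\mathbf{x}_k)=0$, $\Prog(\mathbf{x}_k)=0$, $\mathbf{z}_{k+1}=8$; at $\mathbf{u}=0$ the left-hand side equals $20$ while the right-hand side equals $18$. So the first inequality as literally stated can fail when $\mathbf{y}_k$ is unconstrained. Some additional input from the algorithm (for instance that $\mathbf{y}_k=\prox(\mathbf{x}_{k-1})$, or a slightly different placement of the $g$-term in the intermediate bound) is needed to push the composite extension through. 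The paper's Remark after the convergence theorem acknowledges that \cite{allen2014novel} treats the smooth constrained case and asserts the composite extension ``can be set to work'', without spelling out how; your write-up would need to engage with this rather than assume the cancellation.
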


\begin{lemma}[Coupling] \label{l:AGM-coupling}
For any $\mathbf{u}\in\mathbb{R}^n$
\begin{equation*}
\alpha_{k+1}^2 L_f F(\mathbf{y}_{k+1}) - (\alpha_{k+1}^2 L_f-\alpha_{k+1})F(\mathbf{y}_k) + (V_{\mathbf{z}_{k+1}}(\mathbf{u})-V_{\mathbf{z}_k}(\mathbf{u})) \leq \alpha_{k+1} F(\mathbf{u}).
\end{equation*}
\end{lemma}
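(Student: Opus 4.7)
The plan is to chain Lemma \ref{l:AGM-main} with two convexity arguments, one for $f$ (using gradients) and one for $g$ (using the convex combination), so that the crucial $\alpha_{k+1}^2 L_f F(\mathbf{x}_k)$ term cancels and what remains is precisely the target inequality. Throughout I will use the identities baked into AGM: $\mathbf{x}_k = t_k \mathbf{z}_k + (1-t_k)\mathbf{y}_k$ with $t_k = 1/(\alpha_{k+1} L_f)$, which yields $\mathbf{z}_k - \mathbf{x}_k = \tfrac{1-t_k}{t_k}(\mathbf{x}_k - \mathbf{y}_k) = (\alpha_{k+1} L_f - 1)(\mathbf{x}_k - \mathbf{y}_k)$.

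First I would rewrite the inner product on the left-hand side of Lemma \ref{l:AGM-main} by splitting $\mathbf{z}_k - \mathbf{u} = (\mathbf{z}_k - \mathbf{x}_k) + (\mathbf{x}_k - \mathbf{u})$. For the second piece, convexity of $f$ gives $\langle \nabla f(\mathbf{x}_k), \mathbf{x}_k - \mathbf{u}\rangle \geq f(\mathbf{x}_k) - f(\mathbf{u})$. For the first piece, I substitute $\mathbf{z}_k - \mathbf{x}_k = (\alpha_{k+1} L_f -1)(\mathbf{x}_k - \mathbf{y}_k)$ and apply convexity again to get $\langle \nabla f(\mathbf{x}_k), \mathbf{x}_k - \mathbf{y}_k\rangle \geq f(\mathbf{x}_k) - f(\mathbf{y}_k)$. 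Since $\alpha_{k+1} L_f \geq 1$ this inequality is preserved after multiplying by the nonnegative constant. Combining, I obtain a lower bound of the form
\begin{equation*}
\alpha_{k+1}\langle\nabla f(\mathbf{x}_k),\mathbf{z}_k-\mathbf{u}\rangle \;\geq\; \alpha_{k+1}^2 L_f\, f(\mathbf{x}_k) - (\alpha_{k+1}^2 L_f - \alpha_{k+1})\,f(\mathbf{y}_k) - \alpha_{k+1}\, f(\mathbf{u}).
\end{equation*}

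Next I handle the $g$ term. Since $g$ is not smooth I cannot use gradients, but convexity of $g$ applied to $\mathbf{x}_k = t_k \mathbf{z}_k + (1-t_k)\mathbf{y}_k$ gives $g(\mathbf{x}_k) \leq t_k g(\mathbf{z}_k) + (1-t_k) g(\mathbf{y}_k)$, i.e. $g(\mathbf{z}_k) \geq \alpha_{k+1} L_f\, g(\mathbf{x}_k) - (\alpha_{k+1} L_f - 1)\, g(\mathbf{y}_k)$. Multiplying by $\alpha_{k+1}$ and subtracting $\alpha_{k+1} g(\mathbf{u})$ yields the exact $g$-analogue of the $f$-inequality above. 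Adding the two bounds gives the clean lower estimate
\begin{equation*}
\alpha_{k+1}\langle\nabla f(\mathbf{x}_k),\mathbf{z}_k-\mathbf{u}\rangle + \alpha_{k+1}(g(\mathbf{z}_k)-g(\mathbf{u})) \;\geq\; \alpha_{k+1}^2 L_f\, F(\mathbf{x}_k) - (\alpha_{k+1}^2 L_f - \alpha_{k+1})\,F(\mathbf{y}_k) - \alpha_{k+1}\,F(\mathbf{u}).
\end{equation*}

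Finally I plug this lower bound into the upper bound supplied by the last line of Lemma \ref{l:AGM-main}, namely $\alpha_{k+1}^2 L_f(F(\mathbf{x}_k) - F(\mathbf{y}_{k+1})) + V_{\mathbf{z}_k}(\mathbf{u}) - V_{\mathbf{z}_{k+1}}(\mathbf{u})$. The terms $\alpha_{k+1}^2 L_f F(\mathbf{x}_k)$ cancel on both sides, and after rearranging I recover exactly the claimed inequality. The only point that requires real attention is verifying that the coefficient $\alpha_{k+1} L_f - 1$ is nonnegative (so that multiplying convexity inequalities preserves their direction); this is immediate from the choice $\alpha_{k+1} = (k+2)/(2L_f)$ for $k \geq 0$, or more generally from the standing requirement $\alpha_{k+1} L_f \geq 1$ that underlies the coupling. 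All other steps are routine algebraic bookkeeping, so the main conceptual obstacle—namely producing a descent-style bound on $g(\mathbf{z}_k)$ without a gradient—is dispatched cleanly by using convexity at the defining convex combination $\mathbf{x}_k = t_k \mathbf{z}_k + (1 - t_k)\mathbf{y}_k$.
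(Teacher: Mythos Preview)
Your proof is correct. The paper does not actually supply its own proof of Lemma~\ref{l:AGM-coupling}; it quotes the result from \cite{allen2014novel} and only proves the multilevel analogue, Lemma~\ref{l:coupling}. Your argument---splitting $\mathbf{z}_k-\mathbf{u}$ through $\mathbf{x}_k$, applying gradient convexity of $f$ on each piece, and handling $g$ via the convex-combination inequality $g(\mathbf{x}_k)\le t_k g(\mathbf{z}_k)+(1-t_k)g(\mathbf{y}_k)$---is precisely the standard linear-coupling proof, and every step checks out, including the sign condition $\alpha_{k+1}L_f\ge 1$.

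It is worth contrasting your treatment of $g$ with what the paper does in the proof of Lemma~\ref{l:coupling}. There, because the coarse step forces them to work with $\nabla F_{\mu_k}$, they bound $g(\mathbf{x}_k)-g(\mathbf{z}_k)$ by passing to the smoothed surrogate $g_{\mu_k}$ and using its gradient, which incurs the additive $\beta\mu_k$ error. Your device---reading off $g(\mathbf{z}_k)\ge \alpha_{k+1}L_f\,g(\mathbf{x}_k)-(\alpha_{k+1}L_f-1)g(\mathbf{y}_k)$ directly from convexity at the defining point $\mathbf{x}_k=t_k\mathbf{z}_k+(1-t_k)\mathbf{y}_k$---is the cleaner route and avoids smoothing entirely, which is exactly why it yields the error-free inequality of Lemma~\ref{l:AGM-coupling}. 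The paper's smoothing detour is necessary only because in MAGMA the relevant descent direction is tied to $\nabla F_{\mu_k}$ rather than to the composite structure of $F$.
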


\begin{theorem}[Convergence]
Algorithm \ref{alg:AGM} ensures that
\begin{equation*}
F(\mathbf{y}_T)-F(\mathbf{x}^{*}) \leq \frac{4\Theta L_f}{T^2},
\end{equation*}
where $\Theta$ is any upper bound on $V_{\mathbf{x}_0}(\mathbf{x}^{*})$.
\end{theorem}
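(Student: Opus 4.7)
The plan is to telescope the Coupling Lemma across iterations, using the specific choice $\alpha_{k+1}=(k+2)/(2L_f)$ to absorb the error terms.

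First, I apply Lemma \ref{l:AGM-coupling} with $\mathbf{u}=\mathbf{x}^{*}$ and subtract $\alpha_{k+1}F(\mathbf{x}^{*})$ from both sides. Because $\alpha_{k+1}^2 L_f - (\alpha_{k+1}^2 L_f - \alpha_{k+1}) = \alpha_{k+1}$, this reshuffles into the suboptimality form
\begin{equation*}
\alpha_{k+1}^2 L_f\bigl(F(\mathbf{y}_{k+1}) - F(\mathbf{x}^{*})\bigr) + V_{\mathbf{z}_{k+1}}(\mathbf{x}^{*}) \leq (\alpha_{k+1}^2 L_f - \alpha_{k+1})\bigl(F(\mathbf{y}_k) - F(\mathbf{x}^{*})\bigr) + V_{\mathbf{z}_k}(\mathbf{x}^{*}).
\end{equation*}

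Next I verify the crucial algebraic identity that makes telescoping possible. With $\alpha_{k+1}=(k+2)/(2L_f)$,
\begin{equation*}
\alpha_{k+1}^2 L_f - \alpha_{k+1} = \frac{k(k+2)}{4L_f} \leq \frac{(k+1)^2}{4L_f} = \alpha_k^2 L_f,
\end{equation*}
with equality $\alpha_1^2 L_f - \alpha_1 = 0$ at $k=0$. Combining with $F(\mathbf{y}_k) \geq F(\mathbf{x}^{*})$, the one-step inequality becomes
\begin{equation*}
\alpha_{k+1}^2 L_f\bigl(F(\mathbf{y}_{k+1}) - F(\mathbf{x}^{*})\bigr) + V_{\mathbf{z}_{k+1}}(\mathbf{x}^{*}) \leq \alpha_k^2 L_f\bigl(F(\mathbf{y}_k) - F(\mathbf{x}^{*})\bigr) + V_{\mathbf{z}_k}(\mathbf{x}^{*}),
\end{equation*}
so the potential $\Phi_k := \alpha_k^2 L_f(F(\mathbf{y}_k)-F(\mathbf{x}^{*})) + V_{\mathbf{z}_k}(\mathbf{x}^{*})$ is monotonically non-increasing for $k\geq 1$.

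For the base case $k=0$, the coefficient $\alpha_1^2 L_f - \alpha_1 = 0$ vanishes, so the $F(\mathbf{y}_0)$ term disappears and we obtain $\Phi_1 \leq V_{\mathbf{z}_0}(\mathbf{x}^{*}) \leq \Theta$ without needing any bound on $F(\mathbf{y}_0) - F(\mathbf{x}^{*})$. Chaining with the monotonicity for $k=1,\ldots,T-1$ yields $\Phi_T \leq \Theta$. Dropping $V_{\mathbf{z}_T}(\mathbf{x}^{*}) \geq 0$ and plugging in $\alpha_T^2 L_f = (T+1)^2/(4L_f)$ finishes the proof:
\begin{equation*}
F(\mathbf{y}_T) - F(\mathbf{x}^{*}) \leq \frac{\Theta}{\alpha_T^2 L_f} = \frac{4 L_f \Theta}{(T+1)^2} \leq \frac{4\Theta L_f}{T^2}.
\end{equation*}

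The main obstacle is nothing conceptually deep; the only subtlety is the careful handling of the $k=0$ step, where one needs the exact cancellation $\alpha_1^2 L_f = \alpha_1$ to avoid carrying an uncontrolled $F(\mathbf{y}_0) - F(\mathbf{x}^{*})$ term into the telescoped bound. Everything else is a clean inductive chain on $\Phi_k$ driven by the Coupling Lemma.
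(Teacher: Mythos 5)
Your proof is correct and follows essentially the same route the paper uses: the paper states this AGM result without a written-out proof (deferring to \cite{allen2014novel}), but its own proof of Theorem \ref{th:convergence} for MAGMA telescopes the corresponding coupling lemma with $\mathbf{u}=\mathbf{x}^*$, uses $F(\mathbf{y}_k)\geq F(\mathbf{x}^*)$ and $V_{\mathbf{z}_T}(\mathbf{x}^*)\geq 0$, and exploits the same algebraic relation between $\alpha_{k+1}^2L_f-\alpha_{k+1}$ and $\alpha_k^2L_f$, exactly as you do. Your handling of the $k=0$ step via $\alpha_1^2L_f=\alpha_1$ and the final bound $4L_f\Theta/(T+1)^2\leq 4\Theta L_f/T^2$ are both sound.
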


\begin{remark}
In \cite{allen2014novel} the analysis of the algorithm is done for smooth constrained problems, however it can be set to work for the unconstrained composite setting as well.
\end{remark}

\begin{remark}
Whenever $L_f$ is not known or is expensive to calculate, we can use backtracking line search with the following stopping condition
\begin{equation} \label{eq:backtracking}
F(\mathbf{y}_{k+1})\leq F(\mathbf{x}_k)-\Prog\nolimits_L(\mathbf{x}_k),
\end{equation}
where $L>0$ is the smallest number for which the condition (\ref{eq:backtracking}) holds. The convergence analysis of the algorithm can be extended to cover this case in a straightforward way (see \cite{beck2009fast}).
\end{remark}

\subsection{Smoothing and First Order Methods}
A more general approach of minimizing non-smooth problems is to replace the original problem by a sequence of smooth problems. The smooth problems can be solved more efficiently than by using subgradient type methods on the original problem. {In \cite{nesterov2005smooth} Nesterov} proposed a first order smoothing method, where the objective function is of special $\max$-type. Then Beck and Teboulle extended this method to a more general framework for the class of so-called \textit{smoothable} functions \cite{beck2012smoothing}. Both methods are proven to converge to an $\epsilon$-optimal solution in $\mathcal{O}(1/\epsilon)$ iterations.

\begin{definition}
Let $g:\mathbb{E}\rightarrow(-\infty,\infty]$ be a closed and proper convex function and let $\mathbf{x}\subseteq\dom g$ be a closed convex set. The function $g$ is called "$(\alpha,\beta,K)$-smoothable" over $X$, if there exist $\beta_1$, $\beta_2$ satisfying $\beta_1+\beta_2=\beta>0$ such that for every $\mu>0$ there exists a continuously differentiable convex function $g_\mu :\mathbb{E}\rightarrow (-\infty,\infty)$ such that the following hold:
\begin{enumerate}
\item $g(\mathbf{x})-\beta_1\mu\leq g_\mu (\mathbf{x})\leq g(\mathbf{x})+\beta_2\mu$ for every $\mathbf{x}\in X$.
\item The function $g_\mu$ has a Lipschitz gradient over $X$ with a Lipschitz constant such that there exists $K>0$, $\alpha>0$ such that
\begin{equation*}
\Vert \nabla g_\mu(\mathbf{x})-\nabla g_\mu({\mathbf{y}})\Vert_{*}\leq \left( K+\frac{\alpha}{\mu}\right)\Vert \mathbf{x}-{\mathbf{y}}\Vert\; for\; every\; \mathbf{x},{\mathbf{y}}\in X.
\end{equation*}
\end{enumerate}
We often use $L_\mu=K+\frac{\alpha}{\mu}$.
\end{definition}

It was shown in \cite{beck2012smoothing} that with an appropriate choice of the smoothing parameter $\mu$ a solution obtained by smoothing the original non-smooth problem and solving it by a method with $\mathcal{O}(1/\sqrt{\epsilon})$ convergence rate finds an $\epsilon$-optimal solution in $\mathcal{O}(1/\epsilon)$ iterations. When the proximal step computation is tractable, accelerated first order methods are superior both in theory and in practice. However, for the purposes of establishing a step-size strategy for MAGMA it is convenient to work with smooth problems. We combine the theoretical superiority of non-smooth methods with the rich mathematical structure of smooth models to derive a step size strategy that guarantees convergence. MAGMA does not use smoothing in order to solve the original problem. Instead, it uses a smoothed version of the original model to compute a step size when the search direction is given by the coarse model (coarse correction step).

For the FR problem (and (\ref{eq:L1regLS}) in general), where $g(\mathbf{x})=\lambda \Vert \mathbf{x}\Vert_1$, it can be shown {\cite{beck2012smoothing}} that 
\begin{equation} \label{eq:g_mu}
g_\mu(\mathbf{x})=\lambda\sum_{j=1}^{n}\sqrt{\mu^2+\mathbf{x}_{j}^2}
\end{equation}
is a $\mu$-smooth approximation of $g(\mathbf{x})=\lambda\Vert \mathbf{x}\Vert_1$ with parameters $(\lambda, \lambda n, 0)$.

\section{Multi-level Accelerated Proximal Algorithm} \label{sec:mult-level algorithm}

In this section we formally present our proposed algorithm within the multi-level optimization setting, together with the proof of its convergence with $\mathcal{O}(1/\sqrt{\epsilon})$ rate, where $\epsilon$ is the convergence precision.

\subsection{Model Construction}
First we present the notation and construct a mathematical model, which will later be used in our algorithm for solving (\ref{eq:composite}). A well-defined and converging multi-level algorithm requires appropriate information transfer mechanisms in between levels and a well-defined and coherent coarse model. These aspects of our model are presented next.

\subsubsection{Information Transfer}
In order to transfer information  between levels, we use linear restriction and prolongation operators $\mathbf{R}:\mathbb{R}^{n\times n_H}$ and $\mathbf{P}:\mathbb{R}^{n_H\times n}$ respectively, where $n_H<n$ is the size of the coarse level variable. The restriction operator $\mathbf{R}$ transfers the current fine level point to the coarse level and the prolongation operator $\mathbf{P}$ constructs a search direction for the fine level from the coarse level solution. The techniques we use are standard (see \cite{briggs2000multigrid}) so we keep this section brief.

There are many ways to design $\mathbf{R}$ and $\mathbf{P}$ operators, but they should satisfy the following condition to guarantee the convergence of the algorithm,
\begin{equation*}
\sigma \mathbf{P} = \mathbf{R}^\top,
\end{equation*}
with some scalar $\sigma>0$. Without loss of generality we assume $\sigma=1$, which is a standard assumption in the multi-level literature \cite{briggs2000multigrid}, \cite{nash2000multigrid}.

In our applications, when the problem is given as (\ref{eq:L1regLS}), we use a simple interpolation operator given in the form of (\ref{eq:Rx}) as the restriction operator, which essentially constructs linear combinations of the columns of dictionary $\mathbf{A}$. We give more details on using $\mathbf{R}$ for our particular problem in Section \ref{sec:experiments}.

\subsubsection{Coarse Model} \label{sec:coarse-model}
A key property of the coarse model is that at its initial point $\mathbf{x}_{H,0}$ the optimality conditions of the two models match. This is achieved by adding a linear term to the coarse objective function:
\begin{equation} \label{eq:coarse-objective}
F_H(\mathbf{x}_H)=f_H(\mathbf{x}_H)+g_H(\mathbf{x}_H)+\langle \mathbf{v}_H,\mathbf{x}_H\rangle,
\end{equation}
where the vector $\mathbf{v}_H\in\mathbb{R}^{n_H}$ is defined so that the fine and coarse level problems are first order coherent, that is, their first order optimality conditions coincide. In this paper we have a non-smooth objective function and assume the coarse model is smooth \footnote{This can be done by smoothing the non-smooth coarse term.} with $L_H$-Lipschitz continuous gradient. Furthermore, we construct $\mathbf{v}_H$ such that the gradient of the coarse model is equal to the gradient of the smoothed fine model's gradient:
\begin{equation} \label{eq:coherence}
\nabla F_H(\mathbf{R}\mathbf{x})=\mathbf{R}\nabla F_\mu(\mathbf{x}),
\end{equation}
where $F_\mu(\mathbf{x})$ is a $\mu$-smooth approximation of $F(\mathbf{x})$ with parameters $(a,\beta,K)$. Note that for the composite problem (\ref{eq:composite}) $F_\mu(\mathbf{x})=f(\mathbf{x}) + g_\mu(\mathbf{x})$, where $g_\mu$ is a $\mu$-smooth approximation of $g(\mathbf{x})$. In our experiments for (\ref{eq:L1regLS}) we use $g_\mu(\mathbf{x})$ as given in (\ref{eq:g_mu}). The next lemma gives a choice for $\mathbf{v}_H$, such that (\ref{eq:coherence}) is satisfied.

\begin{lemma}[Lemma 3.1 of \cite{wen2009line}]\label{l:coherence}
Let $F_H$ be a Lipschitz continuous function with $L_H$ Lipschitz constant, then for
\begin{equation} \label{eq:v-term}
\mathbf{v}_H=\mathbf{R} \nabla F_\mu(\mathbf{x})-(\nabla f_H(\mathbf{R}\mathbf{x})+\nabla g_H(\mathbf{R}\mathbf{x}))
\end{equation}
we have
\begin{equation*}
\nabla F_H(\mathbf{R}\mathbf{x})=\mathbf{R} \nabla F_\mu(\mathbf{x}).
\end{equation*}
\end{lemma}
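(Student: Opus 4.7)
The plan is essentially a direct substitution, since the definition of $\mathbf{v}_H$ has been engineered precisely to make the identity hold. First I would differentiate the coarse objective (\ref{eq:coarse-objective}) term by term. Because $f_H$ and $g_H$ are assumed smooth (Assumption \ref{ass:coarse-model}) and the last term $\langle \mathbf{v}_H, \mathbf{x}_H\rangle$ is linear in $\mathbf{x}_H$ with $\mathbf{v}_H$ constant (it is fixed once $\mathbf{x}$ on the fine level is fixed), I get
\begin{equation*}
\nabla F_H(\mathbf{x}_H) = \nabla f_H(\mathbf{x}_H) + \nabla g_H(\mathbf{x}_H) + \mathbf{v}_H.
\end{equation*}

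Next I would evaluate this identity at the restricted point $\mathbf{x}_H = \mathbf{R}\mathbf{x}$ and substitute the definition (\ref{eq:v-term}) of $\mathbf{v}_H$. The two terms $\nabla f_H(\mathbf{R}\mathbf{x}) + \nabla g_H(\mathbf{R}\mathbf{x})$ cancel against their negatives appearing inside $\mathbf{v}_H$, leaving exactly $\mathbf{R}\nabla F_\mu(\mathbf{x})$, which is what we want.

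There is no real obstacle here: the lemma is essentially a definition check, and the hypothesis that $F_H$ is Lipschitz (and hence differentiable in the sense used above, thanks to Assumption \ref{ass:coarse-model}) is only needed to guarantee that $\nabla F_H$ exists and is well-defined. The only thing to double-check is that $\mathbf{v}_H$ does not depend on $\mathbf{x}_H$, so that its contribution to $\nabla F_H(\mathbf{x}_H)$ is simply $\mathbf{v}_H$; this is clear from (\ref{eq:v-term}), which involves only the fine-level point $\mathbf{x}$ and the fixed operator $\mathbf{R}$. So the entire proof fits into two or three lines of algebra.
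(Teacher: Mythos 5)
Your proof is correct: the paper does not reproduce a proof (it cites Lemma 3.1 of Wen--Goldfarb), and the intended argument is exactly your direct substitution of (\ref{eq:v-term}) into the gradient of the coarse objective (\ref{eq:coarse-objective}), with the linear term contributing the constant $\mathbf{v}_H$. Your observation that the Lipschitz hypothesis plays no role beyond ensuring $\nabla F_H$ is well defined is also consistent with how the lemma is used in the paper.
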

The condition (\ref{eq:coherence}) is referred to as \textit{first order coherence}. It ensures that if $\mathbf{x}$ is optimal for the smoothed fine level problem, then $\mathbf{R}\mathbf{x}$ is also optimal in the coarse level.

{
While in practice it is often beneficial to use more than one levels, for the theoretical analysis of our algorithm without loss of generality we assume that there is only one coarse level. Indeed, assume that $\mathbf{I}_h^H\in\mathbb{R}^{H\times h}$ is a linear operator that transfers $\mathbf{x}$ from $\mathbf{R}^h$ to $\mathbf{R}^H$. Now, we can define our restriction operator as $\mathbf{R}=\prod_{i=1}^{l} \mathbf{I}_{h_i}^{H_i}$, where $h_1=n$, $h_{i+1}=H_{i}$ and $H_{l}=n_H$. Accordingly, the prolongation operator will be $\mathbf{P}=\prod_{i=l}^{1} \mathbf{I}_{H_i}^{h_i}$, where $\sigma_i \mathbf{I}_{H_i}^{h_i} = (\mathbf{I}_{h_i}^{H_i})^\top$. Note that this construction satisfies the assumption that $\sigma \mathbf{P}=\mathbf{R}^\top$, with $\sigma=\prod_{i=1}^{l}\sigma_i$.}

\subsection{MAGMA}
In this subsection we describe our proposed multi-level accelerated proximal algorithm for solving (\ref{eq:composite}). We call it MAGMA for Multi-level Accelerated Gradient Mirror descent Algorithm. As in \cite{nesterov2005smooth} and \cite{allen2014novel}, at each iteration MAGMA performs both gradient and mirror descent steps, then uses a convex combination of their results to compute the next iterate. The crucial difference here is that whenever a coarse iteration is performed, our algorithm uses the coarse direction
\begin{equation*}
\mathbf{y}_{k+1}=\mathbf{x}_{k} + s_{k} \mathbf{d}_k(\mathbf{x}_{k}),
\end{equation*}
instead of the gradient, where $\mathbf{d}_k(\cdot)$ is the search direction and $s_k$ is an appropriately chosen step-size. Next we describe how each of these terms is constructed.

At each iteration $k$ the algorithm first decides whether to use the gradient or the coarse model to construct a search direction for the gradient descent step. This decision is based on the optimality condition at the current iterate $\mathbf{x}_{k}$: we do not want to use the coarse direction when it is not helpful, i.e. when 
\begin{itemize}
\item the first order optimality conditions are almost satisfied, or
\item the current point $\mathbf{x}_k$ is very close to the point $\tilde{\mathbf{x}}$, where a coarse iteration was last performed, as long as the algorithm has not performed too many gradient correction steps. 
\end{itemize} 
More formally, we choose the coarse direction, whenever both of the following conditions are satisfied:
\begin{equation} \label{eq:coarse-conditions}
\begin{aligned}
\Vert \mathbf{R} \nabla F_{\mu_{k}}(\mathbf{x}_k)\Vert_* > \kappa \Vert \nabla F_{\mu_{k}}(\mathbf{x}_k)\Vert_*, \\
\Vert \mathbf{x}_k-\tilde{\mathbf{x}}\Vert > \vartheta \Vert \tilde{\mathbf{x}}\Vert\quad\text{or}\quad q<K_d,
\end{aligned}
\end{equation}
where $\kappa\in(0,\min{(1,\min\Vert \mathbf{R}\Vert)})$, $\vartheta>0$ and $K_d$ are predefined constants, and $q$ is the number of consecutive gradient correction steps \cite{wen2009line}, \cite{parpas2014multilevel}.

If a coarse direction is chosen, a coarse model is constructed as described in (\ref{eq:coarse-objective}). In order to satisfy the coherence property (\ref{eq:coherence}), we start the coarse iterations with $\mathbf{x}_{H,0}=\mathbf{R} \mathbf{x}_k$, then solve the coarse model by a first order method and construct a search direction:
\begin{equation} \label{eq:coarse-direction}
\mathbf{d}_k(\mathbf{x}_k)=\mathbf{P}(\mathbf{x}_{H,N_H} - \mathbf{R}\mathbf{x}_k),
\end{equation}
where $\mathbf{x}_{H,N_H}\in\mathbb{R}^{n_H}$ is an approximate solution of the coarse model after $N_H$ iterations. Note that in practice we do not find the exact solution, but rather run the algorithm for $N_H$ iterations, until we achieve a certain acceptable $\mathbf{x}_{H,N_H}$. In our experiments we used the monotone version of FISTA \cite{beck2009fastMFISTA}, however in theory any monotone algorithm will ensure convergence. 

We next show that any coarse direction defined in (\ref{eq:coarse-direction}) is a descent direction for the smoothed fine problem.

\begin{lemma}[Descent Direction] \label{l:descent-direction}
If at iteration $k$ a coarse step is performed and suppose that $F_H(\mathbf{x}_{H,N_H})< F_H(\mathbf{x}_{H,1})$, then for any $\mathbf{x}_{k}$ it holds that
\begin{equation*}
\langle \mathbf{d}_k(\mathbf{x}_k), \nabla F_{\mu_{k}}(\mathbf{x}_k)\rangle < -\frac{\kappa^2}{2L_H} \Vert \nabla F_{\mu_{k}}(\mathbf{x}_k)\Vert_*^2 \leq 0.
\end{equation*}
\end{lemma}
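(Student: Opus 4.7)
The plan is to reduce the inner product on the fine level to a quantity on the coarse level and then apply two standard inequalities (convexity and the descent lemma) inside the coarse model, finally invoking the coarse-step selection rule to pull out the factor $\kappa^2$.

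First I would rewrite the inner product using the adjoint relation $\mathbf{P}=\mathbf{R}^\top$ (recalling that we have set $\sigma=1$):
\begin{equation*}
\langle \mathbf{d}_k(\mathbf{x}_k),\nabla F_{\mu_k}(\mathbf{x}_k)\rangle
= \langle \mathbf{P}(\mathbf{x}_{H,N_H}-\mathbf{R}\mathbf{x}_k),\nabla F_{\mu_k}(\mathbf{x}_k)\rangle
= \langle \mathbf{x}_{H,N_H}-\mathbf{R}\mathbf{x}_k,\mathbf{R}\nabla F_{\mu_k}(\mathbf{x}_k)\rangle.
\end{equation*}
Then by the first-order coherence property (Lemma on coherence, applied to $\mathbf{v}_H$) and the fact that the coarse iterations are initialized with $\mathbf{x}_{H,0}=\mathbf{R}\mathbf{x}_k$, the right-hand side becomes $\langle \mathbf{x}_{H,N_H}-\mathbf{x}_{H,0},\nabla F_H(\mathbf{x}_{H,0})\rangle$. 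So the problem is entirely transported to the coarse model.

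Next I would bound this coarse inner product using convexity of $F_H$,
\begin{equation*}
\langle \nabla F_H(\mathbf{x}_{H,0}),\mathbf{x}_{H,N_H}-\mathbf{x}_{H,0}\rangle \leq F_H(\mathbf{x}_{H,N_H})-F_H(\mathbf{x}_{H,0}),
\end{equation*}
then chain it with the hypothesis $F_H(\mathbf{x}_{H,N_H})<F_H(\mathbf{x}_{H,1})$ to get a strict bound in terms of the very first coarse iterate. Because the coarse model is smooth with $L_H$-Lipschitz gradient and the first coarse iteration is a (proximal/gradient) step with stepsize $1/L_H$, the standard descent lemma gives $F_H(\mathbf{x}_{H,1})-F_H(\mathbf{x}_{H,0})\leq -\tfrac{1}{2L_H}\Vert\nabla F_H(\mathbf{x}_{H,0})\Vert_*^2$. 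Putting everything together yields
\begin{equation*}
\langle \mathbf{d}_k(\mathbf{x}_k),\nabla F_{\mu_k}(\mathbf{x}_k)\rangle < -\frac{1}{2L_H}\Vert\nabla F_H(\mathbf{x}_{H,0})\Vert_*^2 = -\frac{1}{2L_H}\Vert\mathbf{R}\nabla F_{\mu_k}(\mathbf{x}_k)\Vert_*^2,
\end{equation*}
where the last equality again uses first-order coherence. Finally, the first of the two coarse-step conditions in \eqref{eq:coarse-conditions}, namely $\Vert\mathbf{R}\nabla F_{\mu_k}(\mathbf{x}_k)\Vert_*>\kappa\Vert\nabla F_{\mu_k}(\mathbf{x}_k)\Vert_*$, produces the claimed $\kappa^2$ factor.

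The only delicate point I anticipate is justifying the decrease $F_H(\mathbf{x}_{H,1})\leq F_H(\mathbf{x}_{H,0})-\tfrac{1}{2L_H}\Vert\nabla F_H(\mathbf{x}_{H,0})\Vert_*^2$: it relies on the fact that the inner solver used on the coarse model produces at least one full gradient-type decrement on the first iteration (the text indicates MFISTA, for which on a smooth objective the first step coincides with a proximal/gradient step of size $1/L_H$). Everything else is algebraic manipulation with the adjoint identity, coherence, convexity and the coarse-direction selection rule, so the main conceptual work is making the transport-to-coarse-level step clean and invoking the correct assumptions in the right order.
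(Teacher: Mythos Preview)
Your proposal is correct and follows essentially the same route as the paper's proof: transport the inner product to the coarse level via $\mathbf{P}=\mathbf{R}^\top$ and first-order coherence, bound by convexity of $F_H$, use the hypothesis $F_H(\mathbf{x}_{H,N_H})<F_H(\mathbf{x}_{H,1})$ together with the gradient-descent guarantee (Lemma~\ref{l:gradient-descent-guarantee}) for the first coarse step, and finish with the condition $\Vert\mathbf{R}\nabla F_{\mu_k}(\mathbf{x}_k)\Vert_*>\kappa\Vert\nabla F_{\mu_k}(\mathbf{x}_k)\Vert_*$. Your discussion of the ``delicate point'' is exactly what the paper handles by invoking Lemma~\ref{l:gradient-descent-guarantee} on the smooth coarse model.
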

\begin{proof}
Using the definition of coarse direction $\mathbf{d}(\mathbf{x}_k)$, linearity of $\mathbf{R}$ and Lemma \ref{l:coherence} we obtain
\begin{equation} \label{eq:descent-direction-1}
\begin{split}
\langle \mathbf{d}_k(\mathbf{x}_k), \nabla F_{\mu_{k}}(\mathbf{x}_k)\rangle 
& = \langle \mathbf{R}^\top(\mathbf{x}_{H,N_H} - \mathbf{x}_{H,0}), \nabla F_{\mu_{k}}(\mathbf{x}_{k}) \rangle\\
& = \langle \mathbf{x}_{H,N_H} - \mathbf{x}_{H,0}, \mathbf{R} \nabla F_{\mu_{k}}(\mathbf{x}_{k}) \rangle \\
& = \langle \mathbf{x}_{H,N_H} - \mathbf{x}_{H,0}, \nabla F_H(\mathbf{x}_{H,0}) \rangle\\
& \leq F_{H}(\mathbf{x}_{H,N_H}) - F_H(\mathbf{x}_{H,0}),
\end{split}
\end{equation}
where for the last inequality we used the convexity of $F_H$. On the other hand using the monotonicity assumption on the coarse level algorithm and lemma (\ref{l:gradient-descent-guarantee}) we derive:
\begin{equation} \label{eq:descent-direction-2}
F_{H}(\mathbf{x}_{H,N_H}) - F_H(\mathbf{x}_{H,0})
 < F_H(\mathbf{x}_{H,1}) - F_H(\mathbf{x}_{H,0})
 \leq -\frac{1}{2L_H} \Vert\nabla F_H(\mathbf{x}_{H,0})\Vert_{{*}}^2.
\end{equation}

Now using the choice $\mathbf{x}_{H,0}=\mathbf{R}\mathbf{x}_k$ and (\ref{eq:coherence}) we obtain:
\begin{equation} \label{eq:descent-direction-3}
\nabla F_H(\mathbf{x}_{H,0})
 = \nabla F_H(\mathbf{R} \mathbf{x}_k)
 = \mathbf{R}\nabla F_{\mu_{k}}(\mathbf{x}_k).
\end{equation}

Then combining (\ref{eq:descent-direction-2}), (\ref{eq:descent-direction-3}) and condition (\ref{eq:coarse-conditions}) we obtain 
\begin{equation} \label{eq:descent-direction-4}
F_{H}(\mathbf{x}_{H,N_H}) - F_H(\mathbf{x}_{H,0})
 < -\frac{1}{2L_H} \Vert\nabla \mathbf{R} F_{\mu_k}(\mathbf{x}_{k})\Vert^2
 \leq -\frac{\kappa^2}{2L_H}\Vert \nabla F_{\mu_k}(\mathbf{x}_k)\Vert^2.
\end{equation}

Finally, combining (\ref{eq:descent-direction-1}) and (\ref{eq:descent-direction-4}) we derive the desired result.
\end{proof}

After constructing a descent search direction, we find suitable step sizes for both gradient and mirror steps at each iteration. In \cite{allen2014novel} a fixed step size of $\alpha_{k+1}=\frac{1}{L_f}$ is used for gradient steps. However, in our algorithm we do not always use the steepest descent direction for gradient steps, and therefore another step size strategy is required. We show that step sizes obtained from Armijo-type backtracking line search on the smoothed (fine) problem ensures that the algorithm converges with optimal rate. Starting from a predefined {$s_k=s^0$}, we reduce it by a factor of constant $\tau\in(0,1)$ until
\begin{equation} \label{eq:line-search}
F_{\mu_{k}}(\mathbf{y}_{k+1})\leq F_{\mu_{k}}(\mathbf{x}_k) + c s_k \langle \mathbf{d}_k(\mathbf{x}_k),  \nabla F_{\mu_{k}}(\mathbf{x}_k)\rangle
\end{equation}
is satisfied for some predefined constant $c\in (0,1)$.

For mirror steps we always use the current gradient as search direction, as they have significant contribution to the convergence only when the gradient is small enough, meaning we are close to the minimizer, and in those cases we do not perform coarse iterations. However, we need to adjust the step size $\alpha_k$ for mirror steps in order to ensure convergence. Next we formally present the proposed algorithm.

\begin{algorithm}[H] \label{alg:MAGMA}
	\SetAlgoLined
	\KwData{MAGMA($f(\cdot)$, $g(\cdot)$, $\mathbf{x}_{0}$, $T$)}
	Set $\mathbf{y}_0=\mathbf{z}_0=\mathbf{x}_0$ and $\alpha_0=0$\\
	\For{$k=0,1,2,...,T$}{
		{Choose $\eta_{k+1}$ and $\alpha_{k+1}$ according to (\ref{eq:eta}) and (\ref{eq:alpha})}. \\
		Set $t_k=\frac{1}{\alpha_{k+1}\eta_{k+1}}$. \\
		Set $\mathbf{x}_{k}= t_k \mathbf{z}_{k}+(1- t_k)\mathbf{y}_{k}$. \\
		\If{Conditions (\ref{eq:coarse-conditions}) are satisfied}{
			Perform $N_H$ coarse iterations. \\
			Set $\mathbf{d}_k=\mathbf{P}(\mathbf{x}_{H,N_H} - \mathbf{R}\mathbf{x}_k)$. \\
			Set $\mathbf{y}_{k+1}=\mathbf{x}_k + s_k \mathbf{d}(\mathbf{x}_k)$, with $s_k$ satisfying (\ref{eq:line-search}). \\
		}\Else{
			Set $\mathbf{y}_{k+1}=\prox(\mathbf{x}_k)$. \\
		}
		Set $\mathbf{z}_{k+1}=\Mirr\nolimits_{\mathbf{z}_k}(\nabla f(\mathbf{x}_k),\alpha_{k+1})$. \\
 	}
	\caption{MAGMA}
\end{algorithm}

Now we show the convergence of Algorithm \ref{alg:MAGMA} with $\mathcal{O}(1/ \sqrt{\epsilon})$ rate. First we prove an analogue of Lemma \ref{l:AGM-main}.

\begin{lemma} \label{l:magma-main-lemma}
For every $\mathbf{u}\in\mathbb{R}^n$ and $\eta_k>0$ and

{\begin{equation*}
\eta_{k+1}=\left\{
    \begin{array}{ll}
      L_f,\quad\text{when}\; k+1\;\text{is a gradient correction step}\\
      \max\Big\{\frac{1}{4\alpha_k^2 \eta_k},\frac{L_H}{c s_k\kappa^2}
      \Big\},\quad\text{otherwise}\\
    \end{array}
  \right.
\end{equation*}}

{it holds that}

\begin{equation*}
\begin{split}
 & \alpha_{k+1} \langle \nabla f(\mathbf{x}_k),\mathbf{z}_k -\mathbf{u}\rangle + \alpha_{k+1}(g(\mathbf{z}_k)-g(\mathbf{u})) \\
\leq & \alpha_{k+1}^2 L_f \Prog(\mathbf{x}_{k}) + V_{{\mathbf{z}}_{k}}({\mathbf{u}})-V_{{\mathbf{z}}_{k+1}}({\mathbf{u}}) \\
\leq & \alpha_{k+1}^2 [\eta_{k+1}({F_{\mu_k}}(\mathbf{x}_k) - {F_{\mu_k}}(\mathbf{y}_{k+1})) + {L_f}\beta{\mu_{k}}] + V_{\mathbf{z}_k}(\mathbf{u})-V_{\mathbf{z}_{k+1}}(\mathbf{u})
\end{split}
\end{equation*}
\end{lemma}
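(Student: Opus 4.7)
The statement chains two inequalities; I would treat them separately. The first inequality,
\[
\alpha_{k+1}\langle \nabla f(\mathbf{x}_k),\mathbf{z}_k-\mathbf{u}\rangle + \alpha_{k+1}(g(\mathbf{z}_k)-g(\mathbf{u})) \leq \alpha_{k+1}^2 L_f\Prog(\mathbf{x}_k) + V_{\mathbf{z}_k}(\mathbf{u})-V_{\mathbf{z}_{k+1}}(\mathbf{u}),
\]
does not depend on whether iteration $k+1$ is a gradient or a coarse correction step: in both branches of MAGMA the mirror update is $\mathbf{z}_{k+1}=\Mirr_{\mathbf{z}_k}(\nabla f(\mathbf{x}_k),\alpha_{k+1})$, identical to its AGM counterpart. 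It can therefore be established exactly as the first inequality of Lemma~\ref{l:AGM-main} (cf.~\cite{allen2014novel}): write the variational inequality characterising $\mathbf{z}_{k+1}$, apply the three-point Bregman identity, and absorb the cross term using the $1$-strong convexity of $\omega$ together with the gradient-mapping bound that underlies the composite-case argument.

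For the second inequality I would split on the type of step $k+1$. If $k+1$ is a \emph{gradient correction step}, then $\eta_{k+1}=L_f$ and $\mathbf{y}_{k+1}=\prox(\mathbf{x}_k)$, so Lemma~\ref{l:gradient-descent-guarantee} gives $\Prog(\mathbf{x}_k)\leq F(\mathbf{x}_k)-F(\mathbf{y}_{k+1})$. The two-sided smoothability bound $g_{\mu_k}-\beta_1\mu_k\leq g\leq g_{\mu_k}+\beta_2\mu_k$ with $\beta_1+\beta_2=\beta$ lifts this to $F(\mathbf{x}_k)-F(\mathbf{y}_{k+1})\leq F_{\mu_k}(\mathbf{x}_k)-F_{\mu_k}(\mathbf{y}_{k+1})+\beta\mu_k$, and multiplying through by $\eta_{k+1}=L_f$ recovers the claimed bound in this branch.

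If $k+1$ is a \emph{coarse correction step}, then $\mathbf{y}_{k+1}=\mathbf{x}_k+s_k\mathbf{d}_k(\mathbf{x}_k)$ with $s_k$ satisfying~(\ref{eq:line-search}). Chaining~(\ref{eq:line-search}) with Lemma~\ref{l:descent-direction} immediately gives the decrease
\[
F_{\mu_k}(\mathbf{x}_k)-F_{\mu_k}(\mathbf{y}_{k+1}) \geq \frac{c s_k \kappa^2}{2L_H}\Vert \nabla F_{\mu_k}(\mathbf{x}_k)\Vert_*^2.
\]
To match this with $L_f\Prog(\mathbf{x}_k)$ I would introduce $\Prog_{\mu_k}$, the analogous quantity with $g$ replaced by $g_{\mu_k}$, and establish in two steps that $L_f\Prog(\mathbf{x}_k)\leq \frac{1}{2}\Vert \nabla F_{\mu_k}(\mathbf{x}_k)\Vert_*^2+L_f\beta\mu_k$. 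First, applying smoothability inside the minimisation defining $\Prog$ gives $\Prog(\mathbf{x}_k)\leq \Prog_{\mu_k}(\mathbf{x}_k)+\beta\mu_k$. Second, replacing $g_{\mu_k}(\mathbf{y})-g_{\mu_k}(\mathbf{x}_k)$ by its tangent linearisation at $\mathbf{x}_k$ (by convexity of $g_{\mu_k}$) reduces the inner minimisation to a pure quadratic in $\mathbf{y}$ whose optimum equals $-\frac{1}{2L_f}\Vert \nabla F_{\mu_k}(\mathbf{x}_k)\Vert_*^2$, and hence $\Prog_{\mu_k}(\mathbf{x}_k)\leq \frac{1}{2L_f}\Vert \nabla F_{\mu_k}(\mathbf{x}_k)\Vert_*^2$. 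Combining these two bounds with the choice $\eta_{k+1}\geq L_H/(c s_k\kappa^2)$ forces the required inequality.

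The main technical hurdle is precisely this last chain. The composite progress $\Prog$ is defined through the original non-smooth $g$, whereas the only useful decrease from a coarse step is expressed through $\nabla F_{\mu_k}$, so some smoothing must be inserted between them. It is essential that the quadratic minorisation of $\Prog_{\mu_k}$ uses convexity of $g_{\mu_k}$ rather than its Lipschitz gradient, because the smoothed Lipschitz constant $L_{\mu_k}=K+\alpha/\mu_k$ blows up as $\mu_k\downarrow 0$; with the convexity route the quadratic model keeps its original Lipschitz constant $L_f$ and the only additive error paid for the smoothing is the $L_f\beta\mu_k$ term that appears in the statement, which is the cleanest form one can hope for.
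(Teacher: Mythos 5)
Your proposal is correct and follows essentially the same route as the paper's proof: the same reduction of the first inequality to the AGM/mirror-descent guarantee, the same case split on the step type, and for the coarse branch the same key bound $L_f\Prog(\mathbf{x}_k)\leq\tfrac{1}{2}\Vert\nabla F_{\mu_k}(\mathbf{x}_k)\Vert_*^2+L_f\beta\mu_k$ chained with Lemma~\ref{l:descent-direction} and the line-search condition~(\ref{eq:line-search}). The only cosmetic difference is that you derive that bound abstractly through an intermediate $\Prog_{\mu_k}$ and tangent linearisation, whereas the paper substitutes the explicit minimiser $\tilde{\mathbf{x}}=\prox(\mathbf{x}_k)$ and completes the square; both rest on the same use of convexity of $g_{\mu_k}$ and the smoothability bounds.
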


\begin{proof}
First note that the {proof of the first inequality} follows directly from Lemma 4.2 of \cite{allen2014novel}. Now assume $k$ is a coarse step, then the first inequality follows from Lemma \ref{l:mirror-descent-guarantee-composite}. To show the second one we first note that for $\tilde{\mathbf{x}}=\prox(\mathbf{x}_k)$,
\begin{equation*}
\begin{split}
 & \Prog(\mathbf{x}_k)-\frac{1}{2L_f}\Vert \nabla F_{\mu_{k}}(\mathbf{x}_k)\Vert^2 \\
 = & -\frac{L_f}{2}\Vert \tilde{\mathbf{x}}-\mathbf{x}_k\Vert^2 - \langle \nabla f(\mathbf{x}_k),\tilde{\mathbf{x}}-\mathbf{x}_k\rangle - g(\tilde{\mathbf{x}})+g(\mathbf{x}_k) - \frac{1}{2L_f}\Vert \nabla F_{\mu_{k}}(\mathbf{x}_k)\Vert^2 \\
 \leq & -\frac{L_f}{2}\Vert \tilde{\mathbf{x}}-\mathbf{x}_k\Vert^2 - \langle \nabla f(\mathbf{x}_k),\tilde{\mathbf{x}}-\mathbf{x}_k\rangle - \langle \tilde{\mathbf{x}}-\mathbf{x}_k,\nabla g_{\mu_{k}} (\mathbf{x}_k)\rangle + \beta{\mu_{k}} - \frac{1}{2L_f}\Vert \nabla F_{\mu_{k}}(\mathbf{x}_k)\Vert^2 \\
 = & -\frac{L_f}{2} (\Vert \tilde{\mathbf{x}}-\mathbf{x}_k\Vert^2 + 2\langle \tilde{\mathbf{x}}-\mathbf{x}_k,\frac{1}{L_f}\nabla F_{\mu_{k}}(\mathbf{x}_k)\rangle + \Vert \frac{1}{L_f}\nabla F_{\mu_{k}}(\mathbf{x}_k)\Vert^2) + \beta{\mu_{k}} \\
 \leq & \beta{\mu_{k}}.
\end{split}
\end{equation*}
Here we used the definitions of $\Prog$, $\prox$ and $g_\mu$, as well as the convexity of $g_\mu$. Therefore from Lemma \ref{l:descent-direction} and backtracking condition (\ref{eq:line-search}) we obtain that if $k$ is a coarse correction step, then
\begin{equation*}
\begin{split}
L_f\Prog(\mathbf{x}_k) 
 & \leq \frac{1}{2}\Vert \nabla F_{\mu_{k}}(\mathbf{x}_k)\Vert^2 + L_f\beta{\mu_{k}} \\
 & \leq -\frac{L_H}{\kappa^2}\langle \mathbf{d}_k(\mathbf{x}_k),\nabla F_{\mu_{k}} (\mathbf{x}_k)\rangle + L_f\beta{\mu_{k}} \\
 & \leq \frac{L_H}{c s_k \kappa^2} ({F_{\mu_k}}(\mathbf{x}_k) - {F_{\mu_k}}(\mathbf{y}_{k+1})) + L_f\beta{\mu_{k}}.,
\end{split}
\end{equation*}
Otherwise, if $k$ is a gradient correction step, then from Lemma \ref{l:gradient-descent-guarantee}
\begin{equation*}
L_f\Prog(\mathbf{x}_k)\leq L_f (F(\mathbf{x}_k) - F(\mathbf{y}_{k+1})) \leq L_f ({F_{\mu_k}}(\mathbf{x}_k) - {F_{\mu_k}}(\mathbf{y}_{k+1})) + {L_f}\beta\mu_k.
\end{equation*}

Now choosing
\begin{equation*}
\eta_{k+1}=\left\{
    \begin{array}{ll}
      L_f,\quad\text{when}\; k+1\;\text{is a gradient correction step}\\
      \max\Big\{\frac{1}{4\alpha_k^2 \eta_k},\frac{L_H}{c s_k\kappa^2}
      \Big\},\quad\text{otherwise}\\
    \end{array}
  \right.
\end{equation*} 
we obtain the desired result.
\end{proof}

\begin{remark}
The recurrent choice of $\eta_{k+1}$ may seem strange at this point, as we could simply set it to $\max \{\frac{L_H}{c s_k\kappa^2}, L_f\}$, however forcing $\eta_{k+1}\geq \frac{1}{4\alpha_k^2 \eta_k}$ helps us ensure that $t_k\in (0,1]$ later.
\end{remark}

\begin{lemma}[Coupling] \label{l:coupling}
For any $\mathbf{u}\in\mathbb{R}^n$ and $ t_k=1/\alpha_{k+1}\eta_{k+1}$, where $\eta_{k+1}$ is defined as in Lemma \ref{l:magma-main-lemma}, it holds that
\begin{multline}
\alpha_{k+1}^2 \eta_{k+1} F(\mathbf{y}_{k+1})-(\alpha_{k+1}^2 \eta_{k+1}-\alpha_{k+1})F(\mathbf{y}_{k}) +(V_{\mathbf{z}_{k+1}}(\mathbf{u})-V_{\mathbf{z}_k}(\mathbf{u})) \\
\leq \alpha_{k+1} F(\mathbf{u}) + ({L_f + \eta_{k+1}})\alpha_{k+1}^2\beta{\mu_{k}}.
\end{multline}
\end{lemma}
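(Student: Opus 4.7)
The plan is to mirror the strategy used in the AGM coupling lemma (Lemma \ref{l:AGM-coupling}) but carefully track the smoothing error introduced by $F_{\mu_k}$, which is what distinguishes MAGMA from AGM and produces the extra $(L_f+\eta_{k+1})\alpha_{k+1}^2\beta\mu_k$ term on the right-hand side. The starting point will be the chain of inequalities in Lemma \ref{l:magma-main-lemma}, so I would first invoke that lemma to obtain an upper bound on $\alpha_{k+1}\langle\nabla f(\mathbf{x}_k),\mathbf{z}_k-\mathbf{u}\rangle+\alpha_{k+1}(g(\mathbf{z}_k)-g(\mathbf{u}))$ in terms of $\eta_{k+1}(F_{\mu_k}(\mathbf{x}_k)-F_{\mu_k}(\mathbf{y}_{k+1}))$, $L_f\beta\mu_k$ and the Bregman difference.

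Next I would lower-bound the same quantity in terms of the composite $F$. Splitting $\mathbf{z}_k-\mathbf{u}=(\mathbf{z}_k-\mathbf{x}_k)+(\mathbf{x}_k-\mathbf{u})$, convexity of $f$ gives $\langle\nabla f(\mathbf{x}_k),\mathbf{x}_k-\mathbf{u}\rangle\geq f(\mathbf{x}_k)-f(\mathbf{u})$, while the update rule $\mathbf{x}_k=t_k\mathbf{z}_k+(1-t_k)\mathbf{y}_k$ yields the identity $\mathbf{z}_k-\mathbf{x}_k=\tfrac{1-t_k}{t_k}(\mathbf{x}_k-\mathbf{y}_k)$, and convexity at $\mathbf{y}_k$ then gives $\langle\nabla f(\mathbf{x}_k),\mathbf{x}_k-\mathbf{y}_k\rangle\geq f(\mathbf{x}_k)-f(\mathbf{y}_k)$. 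Substituting and using $\alpha_{k+1}/t_k=\alpha_{k+1}^2\eta_{k+1}$ (from $t_k=1/(\alpha_{k+1}\eta_{k+1})$), the $f$-part of the lower bound becomes $\alpha_{k+1}^2\eta_{k+1}f(\mathbf{x}_k)-(\alpha_{k+1}^2\eta_{k+1}-\alpha_{k+1})f(\mathbf{y}_k)-\alpha_{k+1}f(\mathbf{u})$. For the non-smooth part I apply Jensen's inequality $g(\mathbf{x}_k)\leq t_kg(\mathbf{z}_k)+(1-t_k)g(\mathbf{y}_k)$, which after multiplying by $\alpha_{k+1}/t_k$ yields the exactly analogous lower bound for $g$. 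Adding the two pieces collapses them into $\alpha_{k+1}^2\eta_{k+1}F(\mathbf{x}_k)-(\alpha_{k+1}^2\eta_{k+1}-\alpha_{k+1})F(\mathbf{y}_k)-\alpha_{k+1}F(\mathbf{u})$.

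Chaining the lower and upper bounds and rearranging, I am left with an inequality whose only remaining mismatch is that $F_{\mu_k}$ appears in place of $F$ on both $\mathbf{x}_k$ and $\mathbf{y}_{k+1}$. Here I invoke the $(\alpha,\beta,K)$-smoothability inequalities $F-\beta_1\mu_k\leq F_{\mu_k}\leq F+\beta_2\mu_k$ with $\beta_1+\beta_2=\beta$: replacing $F_{\mu_k}(\mathbf{x}_k)$ from below by $F(\mathbf{x}_k)-\beta_2\mu_k$ inside the term $\alpha_{k+1}^2\eta_{k+1}(F(\mathbf{x}_k)-F_{\mu_k}(\mathbf{x}_k))$ and $F_{\mu_k}(\mathbf{y}_{k+1})$ from below by $F(\mathbf{y}_{k+1})-\beta_1\mu_k$ lets the $F(\mathbf{x}_k)$ terms cancel and leaves $\alpha_{k+1}^2\eta_{k+1}F(\mathbf{y}_{k+1})$ on the LHS at the cost of $+\alpha_{k+1}^2\eta_{k+1}\beta\mu_k$ on the RHS. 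Together with the $\alpha_{k+1}^2 L_f\beta\mu_k$ already present, this produces the desired $(L_f+\eta_{k+1})\alpha_{k+1}^2\beta\mu_k$ term.

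The main obstacle is keeping the smoothing bookkeeping straight: Lemma \ref{l:magma-main-lemma} is stated with $F_{\mu_k}$ inside, but we want a statement about $F$, so each transition between $F$ and $F_{\mu_k}$ must pick up the right sign of $\beta_i\mu_k$, and both contributions must combine into a single $\beta\mu_k$ via $\beta_1+\beta_2=\beta$. A secondary subtlety is to ensure $t_k\in(0,1]$, which is exactly why Lemma \ref{l:magma-main-lemma} forces $\eta_{k+1}\geq\frac{1}{4\alpha_k^2\eta_k}$; this guarantees that the Jensen-type inequalities applied to $f$ and $g$ use nonnegative coefficients, so the chain of lower bounds is valid in both coarse-correction and gradient-correction iterations without any case split.
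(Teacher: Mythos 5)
Your proposal is correct, and it follows a genuinely different decomposition than the paper's own proof. The paper argues ``forward'': it starts from convexity of $f$ at $\mathbf{x}_k$ to bound $F(\mathbf{x}_k)-F(\mathbf{u})$, routes the non-smooth difference $g(\mathbf{x}_k)-g(\mathbf{z}_k)$ through the smoothed gradient $\nabla g_{\mu_k}(\mathbf{x}_k)$ (picking up a $\beta\mu_k$ from the smoothability sandwich), handles the coupling term $\langle\nabla F_{\mu_k}(\mathbf{x}_k),\mathbf{x}_k-\mathbf{z}_k\rangle$ via convexity of the smoothed function $F_{\mu_k}$ (picking up another $\tfrac{1-t_k}{t_k}\beta\mu_k$), and only then applies Lemma \ref{l:magma-main-lemma}; moreover it treats gradient correction steps separately by deferring to Lemma \ref{l:AGM-coupling}. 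You instead sandwich the mirror quantity $\alpha_{k+1}\langle\nabla f(\mathbf{x}_k),\mathbf{z}_k-\mathbf{u}\rangle+\alpha_{k+1}(g(\mathbf{z}_k)-g(\mathbf{u}))$ between a purely non-smoothed lower bound --- convexity of $f$ toward $\mathbf{u}$ and $\mathbf{y}_k$ plus Jensen's inequality $g(\mathbf{x}_k)\le t_kg(\mathbf{z}_k)+(1-t_k)g(\mathbf{y}_k)$, exactly as in the original AGM coupling argument --- and the upper bound of Lemma \ref{l:magma-main-lemma}, deferring \emph{all} smoothing bookkeeping to a single final conversion $F_{\mu_k}(\mathbf{x}_k)-F_{\mu_k}(\mathbf{y}_{k+1})\le F(\mathbf{x}_k)-F(\mathbf{y}_{k+1})+\beta\mu_k$. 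This buys two things: the argument is uniform over coarse and gradient correction steps (no case split, since your lower bound uses no smoothing and Lemma \ref{l:magma-main-lemma} covers both cases), and the smoothing error enters exactly twice ($L_f\beta\mu_k$ from Lemma \ref{l:magma-main-lemma} and $\eta_{k+1}\beta\mu_k$ from the final conversion), yielding the stated constant $(L_f+\eta_{k+1})\alpha_{k+1}^2\beta\mu_k$ cleanly, whereas the paper's accounting accumulates smoothing terms in three separate places. One cosmetic remark: your labels $\beta_1$ and $\beta_2$ are swapped relative to the paper's definition of smoothability (the lower bound of $g_\mu$ carries $\beta_1$, the upper bound $\beta_2$), but since only $\beta_1+\beta_2=\beta$ is used, this does not affect the result; and your observation that $t_k\in(0,1]$ (guaranteed by the choice $\eta_{k+1}\ge\tfrac{1}{4\alpha_k^2\eta_k}$) is exactly what legitimizes the Jensen step.
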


\begin{proof}
First note that, if $k$ is a gradient correction step, then the proof follows from Lemma \ref{l:AGM-coupling}. Now assume $k$ is a coarse correction step, then using the convexity of $f$ we obtain
\begin{equation} \label{eq:coupling-1}
\begin{split}
F(\mathbf{x}_k)-F(\mathbf{u}) 
 & = f(\mathbf{x}_k)-f(\mathbf{u}) + g(\mathbf{x}_k)-g(\mathbf{u}) \\
 & \leq \langle \nabla f(\mathbf{x}_k),\mathbf{x}_k-\mathbf{u}\rangle + g(\mathbf{x}_k)-g(\mathbf{u}).
\end{split}
\end{equation}

On the other hand, from the definition of $g_{\mu_k}$ and its convexity we have
\begin{equation} \label{eq:coupling-2}
\begin{split}
g(\mathbf{x}_k)-g(\mathbf{z}_k) 
 & \leq g_{\mu_k}(\mathbf{x}_k) + \beta_1\mu_k - g_{\mu_k}(\mathbf{z}_k) + \beta_2\mu_k \\
 & \leq \langle \nabla g_{\mu_{k}}(\mathbf{x}_k),\mathbf{x}_k -\mathbf{z}_k\rangle + \beta{\mu_{k}}.
\end{split}
\end{equation}

Then using (\ref{eq:coupling-2}), we can {rewrite} $g(\mathbf{x}_k)-g(\mathbf{u})$ as
\begin{equation} \label{eq:coupling-3}
\begin{split}
g(\mathbf{x}_k)-g(\mathbf{u}) 
 & = (g(\mathbf{x}_k)-g(\mathbf{z}_k)) + (g(\mathbf{z}_k)-g(\mathbf{u})) \\
 & \leq \langle \nabla g_{\mu_{k}}(\mathbf{x}_k),\mathbf{x}_k -\mathbf{z}_k\rangle + \beta{\mu_{k}} + (g(\mathbf{z}_k)-g(\mathbf{u})).
\end{split}
\end{equation}

Now rewriting $\langle \nabla f(\mathbf{x}_k),\mathbf{x}_k-\mathbf{u}\rangle= \langle \nabla f(\mathbf{x}_k),\mathbf{x}_k-\mathbf{z}_k\rangle + \langle \nabla f(\mathbf{x}_k),\mathbf{z}_k-\mathbf{u}\rangle$ in (\ref{eq:coupling-1}) and using (\ref{eq:coupling-3}) we obtain
\begin{equation} \label{eq:coupling-4}
 F(\mathbf{x}_k)-F(\mathbf{u})
 \leq \langle \nabla F_{\mu_{k}}(\mathbf{x}_k),\mathbf{x}_k -\mathbf{z}_k\rangle + \beta\mu_k + \langle \nabla f(\mathbf{x}_k),\mathbf{z}_k-\mathbf{u}\rangle + (g(\mathbf{z}_k)-g(\mathbf{u})),
\end{equation}
where we used that $\nabla F_{\mu_k}(\mathbf{x}_k)=\nabla f(\mathbf{x}_k)+\nabla g_{\mu_k}(\mathbf{x}_k)$.

From the choice $ t_k(\mathbf{x}_k-\mathbf{z}_k)=(1- t_k)(\mathbf{y}_k-\mathbf{x}_k)$ and convexity of $F_{\mu_k}$ we have
\begin{equation*}
\begin{split}
\langle \nabla F_{\mu_{k}}(\mathbf{x}_k),\mathbf{x}_k -\mathbf{z}_k\rangle
 & = \frac{(1-t_k)}{t_k} \langle \nabla F_{\mu_{k}}(\mathbf{x}_k),\mathbf{y}_k -\mathbf{x}_k\rangle \\
 & \leq \frac{(1-t_k)}{t_k} (F_{\mu_k}(\mathbf{y}_k)-F_{\mu_k}(\mathbf{x}_k)) \\
 & \leq \frac{(1-t_k)}{t_k} (F(\mathbf{y}_k)-F(\mathbf{x}_k)) + \frac{(1-t_k)}{t_k}\beta\mu_k,
\end{split}
\end{equation*}
where for the last inequality we used that $F_{\mu_k}$ is a $\mu_k$-smooth approximation of $F$. Then choosing {$t_k=1/(\alpha_{k+1}\eta_{k+1})$}, where $\alpha_{k+1}$ is a step-size for the Mirror Descent step and $\eta_{k+1}$ is defined in Lemma \ref{l:magma-main-lemma}, we have
\begin{equation} \label{eq:coupling-5}
\langle \nabla F_{\mu_{k}}(\mathbf{x}_k),\mathbf{x}_k -\mathbf{z}_k\rangle
 \leq (\alpha_{k+1}\eta_{k+1} -1) (F(\mathbf{y}_k)-F(\mathbf{x}_k)) + (\alpha_{k+1} \eta_{k+1} - 1)\beta\mu_k.
\end{equation}

Plugging (\ref{eq:coupling-5}) into (\ref{eq:coupling-4}) we obtain
\begin{multline} \label{eq:coupling-6}
 \alpha_{k+1}(F(\mathbf{x}_k)-F(\mathbf{u})) 
 \leq (\alpha_{k+1}^2 \eta_{k+1} -\alpha_{k+1}) (F(\mathbf{y}_k)-F(\mathbf{x}_k)) + \alpha_{k+1}^2\eta_{k+1}\beta\mu_k \\
  + \alpha_{k+1}\langle \nabla f(\mathbf{x}_k),\mathbf{z}_k-\mathbf{u}\rangle + \alpha_{k+1}(g(\mathbf{z}_k)-g(\mathbf{u})).
\end{multline}

Finally, we apply Lemma \ref{l:magma-main-lemma} on (\ref{eq:coupling-6}) and obtain the desired result after simplifications:
\begin{multline*}
 \alpha_{k+1}(F(\mathbf{x}_k)-F(\mathbf{u})) \leq (\alpha_{k+1}^2\eta_{k+1}-\alpha_{k+1})(F(\mathbf{y}_k)-F(\mathbf{x}_k)) \\
 + \alpha_{k+1}^2 [\eta_{k+1}(F(\mathbf{x}_k) - F(\mathbf{y}_{k+1})) + ({L_f + \eta_{k+1}})\beta{\mu_{k}}] + V_{\mathbf{z}_k}(\mathbf{u})-V_{\mathbf{z}_{k+1}}(\mathbf{u}).
\end{multline*}

\end{proof}

\begin{theorem}[Convergence] \label{th:convergence}
After $T$ iterations of Algorithm \ref{alg:MAGMA}, without loss of generality assuming that the {last iteration} is a gradient correction step {for
\begin{equation} \label{eq:eta}
\eta_{k+1}=\left\{
                \begin{array}{ll}
                  L_f,\quad\text{when}\; k+1\;\text{is a gradient correction step}\\
                  \max\Big\{\frac{1}{4\alpha_k^2 \eta_k},\frac{L_H}{c s_k\kappa^2}
                  \Big\},\quad\text{otherwise}\\
                \end{array}
              \right.
\end{equation}
and
\begin{equation} \label{eq:alpha}
\alpha_{k+1}=\left\{
                \begin{array}{ll}
                  \frac{k+2}{2L_f},\quad\text{when}\; k+1\;\text{is a gradient correction step}\\
                  \frac{1}{2\eta_{k+1}}+\alpha_k\sqrt{\frac{\eta_k}{\eta_{k+1}}},\quad\text{otherwise}\\
                \end{array}
              \right.
\end{equation}
it holds that}
\begin{equation*}
F(\mathbf{y}_T)-F(\mathbf{x}^*)\leq \mathcal{O}\left(\frac{L_f}{T^2}\right),
\end{equation*}
\end{theorem}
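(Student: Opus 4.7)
The plan is to invoke the Coupling Lemma (Lemma \ref{l:coupling}) at $\mathbf{u} = \mathbf{x}^*$ and telescope the resulting inequality from $k=0$ to $T-1$, mirroring the Allen--Zhu/Orecchia argument for AGM but now absorbing both the coarse/gradient split and the smoothing residual.

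First I would verify that the recursions (\ref{eq:eta})--(\ref{eq:alpha}) are well-posed in the sense that (i) $t_k = 1/(\alpha_{k+1}\eta_{k+1}) \in (0,1]$, so $\mathbf{x}_k$ is a genuine convex combination of $\mathbf{y}_k$ and $\mathbf{z}_k$, and (ii) the telescoping inequality $\alpha_{k+1}^2 \eta_{k+1} - \alpha_{k+1} \leq \alpha_k^2 \eta_k$ holds. For a coarse step, expanding the quadratic defining $\alpha_{k+1}$ gives
\[
\alpha_{k+1}^2 \eta_{k+1} = \alpha_k^2 \eta_k + \alpha_k\sqrt{\eta_k/\eta_{k+1}} + \frac{1}{4\eta_{k+1}} \leq \alpha_k^2\eta_k + \alpha_{k+1},
\]
and the lower bound $\eta_{k+1} \geq 1/(4\alpha_k^2 \eta_k)$ built into (\ref{eq:eta}) supplies exactly $\alpha_{k+1}\eta_{k+1} \geq 1$, which is why the remark after Lemma \ref{l:magma-main-lemma} introduces it. For a gradient step, plugging $\alpha_{k+1} = (k+2)/(2L_f)$ and $\eta_{k+1} = L_f$ into (ii) reduces it to the algebraic check $(k+1)^2 \geq k(k+2)$.

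Writing $A_k = \alpha_k^2 \eta_k$ with $A_0 = 0$, the next step is to apply Lemma \ref{l:coupling}, use $F(\mathbf{y}_k) \geq F(\mathbf{x}^*)$ together with (ii) to rewrite $(A_{k+1}-\alpha_{k+1})F(\mathbf{y}_k) \geq A_k F(\mathbf{y}_k) + (A_{k+1}-\alpha_{k+1}-A_k)F(\mathbf{x}^*)$, and sum from $k=0$ to $T-1$. The $F(\mathbf{y}_k)$ terms then telescope; the identity $\sum_{k=0}^{T-1}(A_{k+1}-A_k) = A_T$ cancels the $\sum \alpha_{k+1} F(\mathbf{x}^*)$ contribution on the right-hand side, and the Bregman chain collapses to $V_{\mathbf{z}_0}(\mathbf{x}^*) - V_{\mathbf{z}_T}(\mathbf{x}^*) \leq V_{\mathbf{z}_0}(\mathbf{x}^*)$. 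This yields
\[
A_T\bigl(F(\mathbf{y}_T) - F(\mathbf{x}^*)\bigr) \leq V_{\mathbf{z}_0}(\mathbf{x}^*) + \beta\sum_{k=0}^{T-1}(L_f + \eta_{k+1})\alpha_{k+1}^2\mu_k.
\]
Because the last iteration is assumed to be a gradient step, $A_T = (T+1)^2/(4L_f)$; dividing gives a leading term of the advertised order $\mathcal{O}(L_f/T^2)$, while the residual sum is kept within the same order provided the $\mu_k$ are driven to zero quickly enough relative to $(L_f+\eta_{k+1})\alpha_{k+1}^2$.

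The main obstacle I anticipate is the bookkeeping around transitions between the two step types, since $\alpha_{k+1} = (k+2)/(2L_f)$ on a gradient step is prescribed absolutely rather than recursively. After a run of coarse steps one must check by induction that the accumulated $A_k$ from the coarse recursion is at least $k(k+2)/(4L_f)$ at the gradient iteration index, so that (ii) survives across the transition; otherwise the clean telescoping collapses. A secondary technical point is choosing $\{\mu_k\}$ adaptively along the realised step-type sequence: the weight $(L_f+\eta_{k+1})\alpha_{k+1}^2$ multiplying $\mu_k$ depends on whether step $k+1$ was coarse (with $\eta_{k+1}$ possibly as large as $L_H/(c s_k \kappa^2)$) or gradient, and the $\mathcal{O}(L_f/T^2)$ conclusion only holds if this smoothing tail remains uniformly bounded.
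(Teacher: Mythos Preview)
Your proposal is essentially the paper's own argument: apply the Coupling Lemma at $\mathbf{u}=\mathbf{x}^*$, telescope over $k=0,\dots,T-1$, use $F(\mathbf{y}_k)\ge F(\mathbf{x}^*)$ and $V_{\mathbf{z}_T}(\mathbf{x}^*)\ge 0$, read off $A_T=(T+1)^2/(4L_f)$ from the gradient-step formula at $k=T$, and kill the smoothing residual by choosing $\mu_k$ small. The only cosmetic difference is that the paper retains the exact identity $\alpha_k^2\eta_k=\alpha_{k+1}^2\eta_{k+1}-\alpha_{k+1}+\tfrac{1}{4\eta_{k+1}}$ and carries the extra $\tfrac{1}{4\eta_{k+1}}F(\mathbf{y}_k)$ term through the sum before lower-bounding it by $\tfrac{1}{4\eta_{k+1}}F(\mathbf{x}^*)$, whereas you drop that term immediately via the inequality $A_{k+1}-\alpha_{k+1}\le A_k$; the two bookkeeping schemes are equivalent. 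The paper makes the $\mu_k$ choice explicit as $\mu_k\le \zeta/\bigl((L_f+\eta_{k+1})\alpha_{k+1}^2\beta T\bigr)$, which is exactly the ``driven to zero quickly enough'' prescription you allude to.

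The transition concern you raise---that after a coarse step the absolutely prescribed $\alpha_{k+1}=(k+2)/(2L_f)$ may fail to satisfy $A_{k+1}-\alpha_{k+1}\le A_k$ unless one verifies $A_k\ge k(k+2)/(4L_f)$ inductively---is legitimate, and the paper does not address it either: it simply asserts the identity holds for all $k$. So you are not missing a trick available in the paper; you are correctly flagging a point the published proof leaves implicit.
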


\begin{proof}
{By choosing $\eta_{k+1}$ and $\alpha_{k+1}$ according to (\ref{eq:eta}) and (\ref{eq:alpha})}, we ensure that $\alpha_{k}^2 \eta_{k}=\alpha_{k+1}^2\eta_{k+1}-\alpha_{k+1}+\frac{1}{4\eta_{k+1}}$ and ${t_k=1/(\alpha_{k+1}\eta_{k+1})}\in(0,1]$. Then telescoping Lemma \ref{l:coupling} with $k=0,1,\ldots,T-1$ we obtain

\begin{multline*}
\sum_{k=0}^{T-1}[\alpha_{k+1}^2\eta_{k+1}F(\mathbf{y}_{k+1})-\alpha_{k}^2\eta_{k}F(\mathbf{y}_{k})+\frac{1}{4\eta_{k+1}}F(\mathbf{y}_k)]+V_{\mathbf{z}_T}(\mathbf{u})-V_{\mathbf{z}_0}(\mathbf{u}) \\
\leq \sum_{k=0}^{T-1}[\alpha_{k+1} F(\mathbf{u})+({L_f+\eta_{k+1}})\alpha_{k+1}^2\beta{\mu_{k}}].
\end{multline*}

Or equivalently,
\begin{multline*}
\alpha_T^2 \eta_T F(\mathbf{y}_T) - \alpha_0^2\eta_0 F(\mathbf{y}_0) + \sum_{k=0}^{T-1}\frac{1}{4\eta_{k+1}}F(\mathbf{y}_k)+V_{\mathbf{z}_T}(\mathbf{u})-V_{\mathbf{z}_0}(\mathbf{u}) \\
\leq \sum_{k=0}^{T-1}\left[\alpha_{k+1} F(\mathbf{u})+({L_f+\eta_{k+1}})\alpha_{k+1}^2\beta{\mu_{k}}\right],
\end{multline*}

Then choosing $\mathbf{u}=\mathbf{x}^*$ and noticing that $F(\mathbf{y}_k)\geq F(\mathbf{x}^*)$ and  $V_{\mathbf{z}_T}(\mathbf{u})\geq 0$, {for an upper bound $\Theta\geq V_{\mathbf{z}_0}(\mathbf{x}^*)$, we obtain}
\begin{equation*}
\alpha_T^2 \eta_T F(\mathbf{y}_T) - \alpha_0^2\eta_0 F(\mathbf{y}_0) \leq \Theta + \sum_{k=0}^{T-1}\left[\left(\alpha_{k+1} - \frac{1}{4\eta_{k+1}}\right) F(\mathbf{x}^*)+({L_f+\eta_{k+1}})\alpha_{k+1}^2\beta{\mu_{k}}\right].
\end{equation*}

Now using the fact that $\alpha_{k+1}-\frac{1}{4\eta_{k+1}} = \alpha_{k+1}^2\eta_{k+1}-\alpha_{k}^2 \eta_{k}$, for $k=0,1,\ldots,T-1$ we simplify to
\begin{equation*}
\alpha_T^2 \eta_T F(\mathbf{y}_T) - \alpha_0^2\eta_0 F(\mathbf{y}_0) \leq \Theta + \alpha_T^2 \eta_T F(\mathbf{x}^*) - \alpha_0^2\eta_0 F(\mathbf{x}^*) + \sum_{k=0}^{T-1}({L_f+\eta_{k+1}})\alpha_{k+1}^2\beta{\mu_{k}}.
\end{equation*}

{Then defining $\alpha_0=0$ and $\eta_0=L_f$ and using the property of Bregman distances we obtain}
\begin{equation*}
\begin{split}
\alpha_T^2 \eta_T (F(\mathbf{y}_T) - F(\mathbf{x}^*))
& \leq \Theta + \alpha_0^2\eta_0 (F(\mathbf{y}_0) - F(\mathbf{x}^*)) + \sum_{k=0}^{T-1}({L_f+\eta_{k+1}})\alpha_{k+1}^2\beta{\mu_{k}} \\
& \leq {\Theta + \frac{1}{L_f}} + \sum_{k=0}^{T-1}({L_f+\eta_{k+1}})\alpha_{k+1}^2\beta{\mu_{k}}.
\end{split}
\end{equation*}

Then assuming that the last iteration $T$ is a gradient correction step\footnote{This assumption does not lose the generality, as we can always perform one extra gradient correction iteration} and using the definitions of $\alpha_T$ and $\eta_T$ for the left hand side, we can further simplify to
\begin{equation*}
\frac{(T+1)^2}{4L_f}(F(\mathbf{y}_T)-F(\mathbf{x}^*)) \leq {\Theta} + \sum_{k=0}^{T-1}({L_f+\eta_{k+1}})\alpha_{k+1}^2\beta{\mu_{k}},
\end{equation*}
or equivalently
\begin{equation*}
F(\mathbf{y}_T)-F(\mathbf{x}^*) \leq 4L_f\frac{{\Theta} + \sum_{k=0}^{T-1}({L_f+\eta_{k+1}})\alpha_{k+1}^2\beta{\mu_{k}}}{(T+1)^2}.
\end{equation*}

Finally, choosing $\mu_{k}\in\Big(0, \frac{\zeta}{({L_f+\eta_{k+1}})\alpha_{k+1}^2\beta T}\Big]$ for a small predefined $\zeta\in (0,1]$, we obtain:
\begin{equation*}
F(\mathbf{y}_T)-F(\mathbf{x}^*) \leq {\frac{4L_f(\Theta+\zeta)}{(T+1)^2}}.
\end{equation*}
\end{proof}

\begin{remark}
Clearly, the constant factor of our Algorithm's worst case convergence rate is not better than that of AGM, however, as we show in the next section, in practice MAGMA can be several times faster than AGM.
\end{remark}

\section{Numerical Experiments} \label{sec:experiments}
In this section we demonstrate the empirical performance of our algorithm compared to FISTA \cite{beck2009fast} and a variant of MISTA \cite{parpas2014multilevel} on several large-scale face recognition problems. We chose those two particular algorithms, as MISTA is the only multi-level algorithm that can solve non-smooth composite problems and FISTA is the only other algorithm that was able to solve our large-scale problems in reasonable times. The source code and data we used in the numerical experiments is available from the webpage of the second author, \href{http://www.doc.ic.ac.uk/~pp500/magma.html}{www.doc.ic.ac.uk/$\sim$pp500/magma.html}. {In Section \ref{secRand1} we also compare FISTA and MAGMA to {two recent proximal stochastic algorithms: (APCG \cite{lin2015accelerated}) and SVRG \cite{xiao2014proximal}}. We chose Face Recognition (FR) as a demonstrative application since (a) FR using large scale dictionaries is a relatively unexplored problem \footnote{FR using large scale dictionaries is an unexplored problem in $\ell_1$ optimization literature due to the complexity of the (\ref{eq:L1regLS}) problem.} and (b) the performance of large scale face recognition depends on the face resolution.}

\subsection{Robust Face Recognition}
In \cite{wright2010dense} it was shown that a sparse signal $\mathbf{x}\in\mathbb{R}^n$ from highly corrupted linear measurements $\mathbf{b}=\mathbf{A}\mathbf{x}+\mathbf{e}\in\mathbb{R}^n$, where $\mathbf{e}$ is an unknown error and $\mathbf{A}\in\mathbb{R}^{m\times n}$ is a highly correlated dictionary, can be recovered by solving the following $\ell_1$-minimization problem
\begin{equation*}
\begin{array}{ccc}
\min\limits_{\mathbf{x}\in\mathbb{R}^n ,\mathbf{e}\in\mathbb{R}^m}\Vert\mathbf{x}\Vert_{1} + \Vert\mathbf{e}\Vert_{1}  & \text{subject to} & \mathbf{A}\mathbf{x}+\mathbf{e}=\mathbf{b}.
\end{array}
\end{equation*}
It was shown that accurate recovery of sparse signals is possible and computationally feasible even for images with nearly $100\%$ of the observations corrupted. 

We introduce the new variable $\mathbf{w}=[\mathbf{x}^\top, \mathbf{e}^\top]^\top \in\mathbb{R}^{n+m}$ and matrix $\mathbf{B}=[\mathbf{A},\mathbf{I}] \in\mathbb{R}^{m\times(m+n)}$, where $\mathbf{I}\in\mathbb{R}^{m\times m}$ is the identity matrix. The updated (\ref{eq:L1regLS}) problem can be written as the following optimization problem:

\begin{equation} \label{eq:L1regLS-CAB}
\min_{\mathbf{w}\in\mathbb{R}^{m+n}}{\frac{1}{2}\Vert \mathbf{B} \mathbf{w}-\mathbf{b}\Vert_2^2+\lambda\Vert \mathbf{w}\Vert_1}.
\end{equation}

A popular application of dense error correction in computer vision, is the face recognition problem. There $\mathbf{A}$ is a dictionary of facial images stacked as column vectors\footnote{Facial images are, indeed, highly correlated.}, $\mathbf{b}$ is an incoming image of a person who we aim to identify in the dictionary and the non-zero entries of the sparse solution $\mathbf{x}^*$ (obtained from the solution of (\ref{eq:L1regLS-CAB}) $\mathbf{w}^*=[{\mathbf{x}^*}^\top,{\mathbf{e}^*}^\top]^\top$) indicate the images that belong to the same person as $\mathbf{b}$. As stated above, $\mathbf{b}$ can be highly corrupted, e.g. with noise and/or occlusion.

In a recent survey Yang et. al. \cite{yang2013fast} compared a number of algorithms solving the face recognition problem with regard to their efficiency and recognition rates. Their experiments showed that the best algorithms were the Dual Proximal Augmented Lagrangian Method (DALM), primal dual interior point method (PDIPA) and L1LS \cite{kim2007interior}, however, the images in their experiments were of relatively small dimension - $40\times 30$ pixels. Consequently the problems were solved within a few seconds. It is important to notice that both DALM and PDIPA are unable to solve large-scale (both large number of images in $\mathbf{A}$ and large image dimensions) problems, as the former requires inverting $\mathbf{A}\mathbf{A}^\top$ and the later uses Newton update steps. On the other hand L1LS is designed to take advantage of sparse problems structures, however it performs significantly worse whenever the problem is not very sparse.

\subsection{MAGMA for Solving Robust Face Recognition}
In order to be able to use the multi-level algorithm, we need to define a coarse model and restriction and prolongation operators appropriate for the given problem. In this subsection we describe those constructions for the face recognition problem used in our numerical experiments.

Creating a coarse model requires decreasing the size of the original fine level problem. In our experiments we only reduce $n$, as we are trying to improve over the performance of first order methods, {whose complexity per iteration is} $\mathcal{O}(n^2)$. Also the columns of $\mathbf{A}$ are highly correlated, which means that reducing $n$ loses little information about the original problem, whereas it is not necessarily true for the rows of $\mathbf{A}$.  Therefore, we introduce the dictionary $\mathbf{A}_H\in\mathbb{R}^{m\times n_H}$ with $n_H<n$. More precisely, we set $\mathbf{A}_H=\mathbf{A} \mathbf{R}_x^\top$, with $\mathbf{R}_x$ defined in (\ref{eq:Rx}). With the given restriction operator we are ready to construct a coarse model for the problem (\ref{eq:L1regLS-CAB}):
\begin{equation} \label{eq:L1regLS-CAB-coarse}
\min_{\mathbf{w}_H\in\mathbb{R}^{m+n_H}}{\frac{1}{2}\Vert \begin{bmatrix}
\mathbf{A}_H & \mathbf{I}
\end{bmatrix}\mathbf{w}_H-\mathbf{b}\Vert_2^2
+\lambda\sum_{j=1}^{m+n_H}\sqrt{\mu_H^2+\mathbf{w}_{H,j}^2}
+\langle \mathbf{v}_H,\mathbf{w}_H\rangle},
\end{equation}
where $\mathbf{w}_H=[\mathbf{x}_H^\top, \mathbf{e}^\top]^\top$ and $\mathbf{v}_H$ is defined in (\ref{eq:v-term}). It is easy to check that the coarse objective function and gradient can be evaluated using the following equations:
\begin{equation*}
F_H(\mathbf{w}_H)
=\frac{1}{2}\Vert \mathbf{A}_H \mathbf{x}_H+\mathbf{e}-\mathbf{b}\Vert_2^2
+\lambda\sum_{j=1}^{m+n_H}\sqrt{\mu_H^2+\mathbf{w}_{H,j}^2}
+\langle \mathbf{v}_H,\mathbf{w}_H\rangle,
\end{equation*}
\begin{equation*}
\nabla F_H(\mathbf{w}_H)
=\begin{bmatrix}
\mathbf{A}_H^\top \mathbf{A}_H & \mathbf{A}_H^\top\\
\mathbf{A}_H & \mathbf{I}
\end{bmatrix}\begin{bmatrix}
\mathbf{x}_H\\
\mathbf{e}
\end{bmatrix}-\begin{bmatrix}
\mathbf{A}_H^\top \mathbf{b}\\
\mathbf{b}
\end{bmatrix}
+\nabla g_H (\mathbf{w}_H)
+\mathbf{v}_H,
\end{equation*}
where $\nabla g_H (\mathbf{w}_H)$ is the gradient of $g_H $ defined in (\ref{eq:g_mu}) and is given elementwise as follows:
\begin{equation*}
\frac{\lambda\mathbf{w}_{H,j}}{\sqrt{\mu_H^2+\mathbf{w}_{H,j}^2}}, \forall j=1,2,\ldots,n_H.
\end{equation*}
Hence we do not multiply matrices of size $m\times (m+n_H)$, but only $m\times n_H$. 

We use a standard full weighting operator \cite{briggs2000multigrid} as a restriction operator,
\begin{equation} \label{eq:Rx}
\mathbf{R}_x = \frac{1}{4}\begin{bmatrix}
2 & 1 & 0 & 0 & 0 & 0 &...& 0 \\
0 & 1 & 2 & 1 & 0 & 0 &...& 0 \\
0 & 0 & 0 & 1 & 2 & 1 &...& 0 \\
  &   &	  &...&   &   &   & 0 \\
0 &...&   & 0 & 0 & 1 & 2 & 1 \end{bmatrix}{\in\mathbb{R}^{\frac{n_i}{2}\times n_i}}
\end{equation}
and its transpose for the prolongation operator {at each level $i\in\{1,...,H-1\}$, where $n_1=n$, $n_{i+1}=\frac{n_i}{2}$ and $H$ is the depth of each V-cycle}. However, we do not apply those operators on the whole variable $\mathbf{w}$ of the model (\ref{eq:L1regLS-CAB}), as this would imply also reducing $m$. We rather apply the operators only on part $\mathbf{x}$ of $\mathbf{w}=[\mathbf{x}^\top, \mathbf{e}^\top]^\top$, therefore our restriction and prolongation operators are of the following forms: $\mathbf{R}=[\mathbf{R}_x, \mathbf{I}]$ and $\mathbf{P}=\mathbf{R}^\top=[\mathbf{R}_x^\top, \mathbf{I}]^\top$.

{In all our experiments, for the $\Mirr$ operator we chose the standard Euclidean norm $\Vert \cdot\Vert_2$ and accordingly $\frac{1}{2}\Vert \cdot\Vert_2^2$ as a Bregman divergence.}

\subsection{Numerical Results} \label{sec:experimental results}

The proposed algorithm has been implemented in MatLab\textregistered\quad and tested on a PC with Intel\textregistered\quad Core\texttrademark\quad i7 CPU (3.4GHz$\times$8) and 15.6GB memory.

In order to create a large scale FR setting we created a dictionary $\mathbf{A}$ of up to $8,824$ images from more than $4,000$ different people \footnote{Some people had up to 5 facial images in the database some only one.} by merging images from several facial databases captured in controlled and in uncontrolled conditions. In particular, we used images from MultiPIE \cite{gross2010multi}, XM2VTS \cite{luettin1998evaluation} and many in-the-wild databases such as LFW \cite{huang2007labeled}, LFPW \cite{belhumeur2011localizing} and HELEN \cite{le2012interactive}. In \cite{yang2013fast} the dictionary was small hence only very low-resolution images were considered. However, in large scale FR having higher resolution images is very beneficial (e.g., from a random cohort of 300 selected images, low resolution images of $40\times 30$ achieved around $85\%$ recognition rate, while using images of $200\times 200$ we went up to $100\%$). Hence, in the remaining of our experiments the dictionary used is $\begin{bmatrix}\mathbf{A} & \mathbf{I}\end{bmatrix}$ of $40,000 \times 48,824$ and some subsets of this dictionary. 

In order to show the improvements of our method with regards to FISTA as well as to MISTA, we have designed the following experimental settings
\begin{itemize}
\item Using 440 facial images, then $\begin{bmatrix}\mathbf{A} & \mathbf{I}\end{bmatrix}$ is of $40,000 \times 40,440$
\item Using 944 facial images, then $\begin{bmatrix}\mathbf{A} & \mathbf{I}\end{bmatrix}$ is of $40,000 \times 40,944$
\item Using 8,824 facial images, then $\begin{bmatrix}\mathbf{A} & \mathbf{I}\end{bmatrix}$ is of $40,000 \times 48,824$
\end{itemize}

For each dictionary we randomly chose $20$ different input images $\mathbf{b}$ that are not in the dictionary and ran the algorithms with $4$ different random starting points. For all of those experiments we set the parameters of the algorithms as follows: $\kappa=0.9$, $0.8$ and $0.7$ respectively for experiment settings with $440$, $944$ and $8,824$ images in the database, $K_d=30$ for the two smaller settings and $K_d=50$ for the largest one, {the convergence tolerance was set to} $\epsilon=10^{-6}$ for the two smaller experiments and $\epsilon=10^{-7}$ for the largest one, $\lambda=10^{-6}$, $\tau=0.95$ and $s^0=10$. For larger dictionaries we slightly adjust $\kappa$ and $K_d$ in order to adjust for the larger problem size, giving the algorithm more freedom to perform coarse iterations. In all experiments we solve the coarse models with lower tolerance than the original problem, namely with tolerance $10^{-3}$.

{For all experiments we used as small coarse models as possible so that the corresponding algorithm could produce sparse solutions correctly identifying the sought faces in the dictionaries. Specifically, for experiments on the largest database with $8824$ images we used $13$ levels for MAGMA (so that in the coarse model $\mathbf{A}_H$ has only $2$ columns) and only two levels for MISTA, since with more than two levels MISTA was unable to produce sparse solutions. The number of levels used in MAGMA and MISTA are tabulated in Table \ref{t:coarse-levels}.}

\begin{table}
\center
{
\begin{tabular}{|c|c|c|c|}
	\hline 
	      & $440$ images & $944$ images & $8824$ images \\
	\hline 
	MISTA & 7 & 2 & 2 \\ 
	\hline 
	MAGMA & 7 & 7 & 13 \\ 
	\hline 
\end{tabular}}
\caption{{Number of coarse levels used for MISTA and MAGMA for each dictionary}}
\label{t:coarse-levels}
\end{table}

In all experiments we measure and compare the CPU running times of each tested algorithm, because comparing the number of iterations of a multi-level method against a standard first order algorithm would not be fair. This is justified as the multilevel algorithm may need fewer iterations on the fine level, but with a higher computational cost for obtaining each coarse direction. Comparing all iterations including the ones in coarser levels would not be fair either, as those are noticeably cheaper than the ones on the fine level. Even comparing the number of objective function values and gradient evaluations would not be a good solution, as on the coarse level those are also noticeably cheaper than on the fine level. Furthermore, the multilevel algorithm requires additional computations for constructing the coarse model, as well as for vector restrictions and prolongations. Therefore, we compare the algorithms with respect to CPU time.

Figures \ref{p:exp1}, \ref{p:exp2} and \ref{p:exp3} show the relative CPU running times until convergence of MAGMA and MISTA compared to FISTA\footnote{We only report the performance of FISTA, as AGM and FISTA demonstrate very similar performances on all experiments.}. The horizontal axis indicate different input images $\mathbf{b}$, whereas the vertical lines on the plots show the respective highest and lowest values obtained from different starting points. Furthermore, in Table \ref{t:average-times} we present the average CPU times required by each algorithm until convergence for each problem setting, namely with databases of $440$, $944$ and $8824$ images. As the experimental results show, MAGMA is $2-10$ times faster than FISTA. On the other hand, MISTA is more problem dependant. On some instances it can be even faster than MAGMA, but on most cases its performance is somewhere in between FISTA and MAGMA.

%For fair comparison, for both MAGMA and MISTA we used the same coarse models and the same parameters for performing coarse correction steps. Note that, it is possible to tune up some parameters so that any of the algorithms is noticeably faster on a specific problem instance, however, we chose the best parameters for each dictionary and kept them fixed throughout the experiments.

\begin{figure}
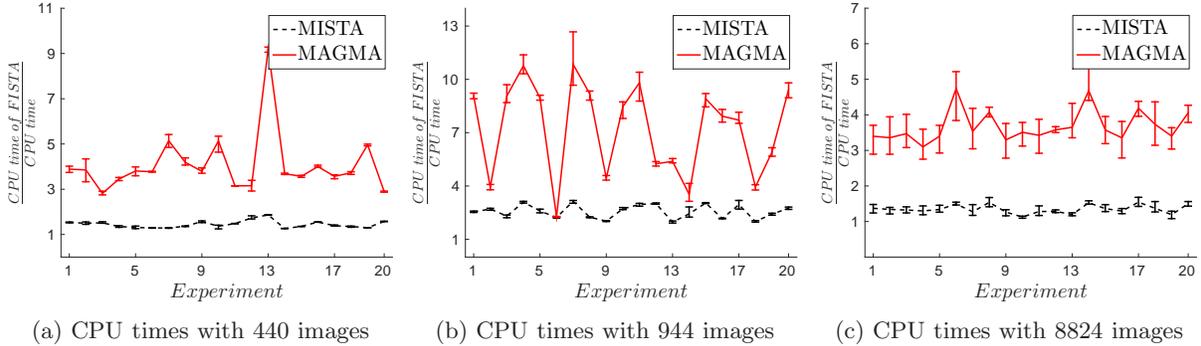

	\centering
	\begin{subfigure}[b]{0.3\textwidth}
	    \centering
		\includegraphics[scale=0.28]{time_exp1}
		\caption{CPU times with $440$ images}
		\label{p:exp1}
	\end{subfigure}
	\begin{subfigure}[b]{0.3\textwidth}
	    \centering
		\includegraphics[scale=0.28]{time_exp2}
		\caption{CPU times with $944$ images}
		\label{p:exp2}
	\end{subfigure}	
	\begin{subfigure}[b]{0.3\textwidth}
	    \centering
		\includegraphics[scale=0.28]{time_exp3}
		\caption{CPU times with $8824$ images}
		\label{p:exp3}
	\end{subfigure}
	\caption{\textbf{Comparing MAGMA and MISTA with FISTA}. Relative CPU times of MAGMA and MISTA compared to FISTA on face recognition experiments with dictionaries of varying number of images ($440$, $944$ and $8824$). For each dictionary $20$ random input images $\mathbf{b}$ were chosen, which indicate the numbers on horizontal axis. Additionally, the results with $4$ random starting points for each experiment are reflected as horizontal error bars.}
\end{figure}

\begin{table}
\center
\begin{tabular}{|c|c|c|c|}
	\hline 
	& $440$ images & $944$ images & $8824$ images \\
	\hline 
	FISTA & 98.69 & 175.77 & 1753 \\ 
	\hline 
	MISTA & 68.77 & 70.4 & 1302 \\ 
	\hline 
	MAGMA & 25.73 & 29.62 & 481 \\ 
	\hline 
\end{tabular} 
\caption{Average CPU running times (seconds) of MISTA, FISTA and MAGMA}
\label{t:average-times}
\end{table}

In order to better understand the experimental convergence speed of MAGMA, we measured the error of stopping criteria\footnote{We use the norm of the gradient mapping as a stopping criterion} and objective function values generated by MAGMA and FISTA over time from an experiment with $440$ images in the dictionary. The stopping criteria are log-log-plotted in Figure \ref{p:exp3-crit-log} and the objective function values - in Figure \ref{p:exp3-fval-log}. In all figures the horizontal axis is the CPU time in seconds (in log scale). Note that the coarse correction steps of MAGMA take place at the steep dropping points giving large decrease, but then at gradient correction steps first large increase and then  relatively constant behaviour is recorded for both plots. Overall, MAGMA reduces both objective value and the norm of the gradient mapping significantly faster.

\begin{figure}
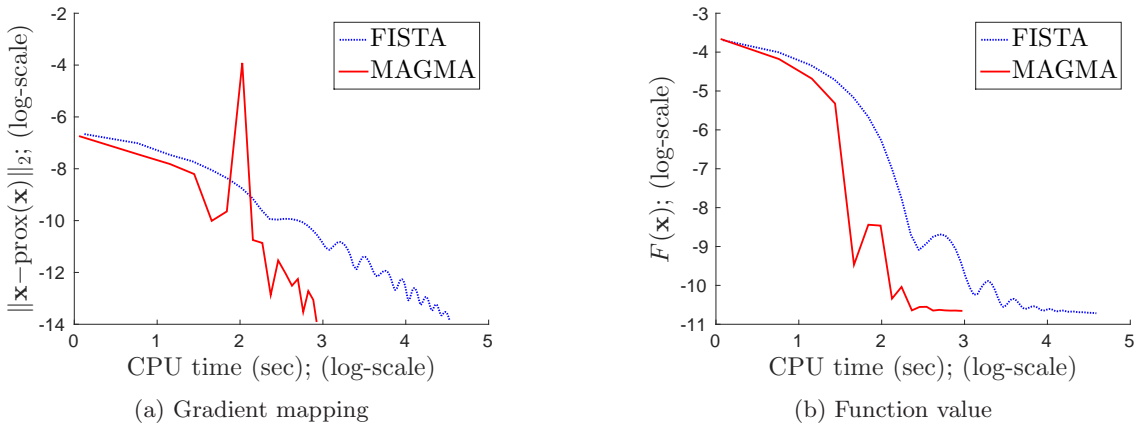

	\centering
	\begin{subfigure}[b]{0.48\textwidth}
	    \centering
		\includegraphics[scale=0.35]{steps_exp1_crit_log}
		\caption{Gradient mapping}
		\label{p:exp3-crit-log}
	\end{subfigure}
	\begin{subfigure}[b]{0.48\textwidth}
	    \centering
		\includegraphics[scale=0.35]{steps_exp1_fval_log}
		\caption{Function value}
		\label{p:exp3-fval-log}
	\end{subfigure}
	
	\caption{\textbf{MAGMA and FISTA convergences over time}. Comparisons of gradient mappings and function values of MAGMA, MISTA and FISTA over time.}
\end{figure}

\subsection{Stochastic Algorithms for Robust Face Recognition}\label{secRand1}

{Since (randomized) coordinate descent (CD) {and stochastic gradient (SG)} algorithms are considered as state of the art for general $\ell_1$ regularized least squared problems, we finish this section with a discussion on the application of these methods to the {robust} face recognition problem.} {We implemented {two} recent {algorithms:} accelerated randomised proximal coordinate gradient method (APCG) \footnote{We implemented the efficient version of the algorithm that avoids full vector operations.} \cite{lin2015accelerated} {and proximal stochastic variance reduced gradient (SVRG) \cite{xiao2014proximal}}. We also tried block coordinate methods with cyclic updates \cite{beck2013convergence}, however the randomised version of block coordinate method performs significantly better, hence we show only the performances of APCG {and SVRG} so as  not to clutter the results.}

{In our numerical experiments we found that  CD and {SG} algorithms are not suitable for {robust} face recognition problems. There are two reasons for this. Firstly, the data matrix contains highly correlated data. The correlation is due to the fact that we are dealing with a fine-grained object recognition problem. That is, the samples of the dictionary are all facial images of different people and the problem is to identify the particular individual.} {The second reason is the need to extend the standard $\ell_1$ regularized least squared model so that it can handle gross errors, such as occlusions. It can be achieved by using the dense error correction formulation \cite{wright2010dense} in (\ref{eq:L1regLS-CAB}) (also referred to as   the ``bucket" model in  \cite{wright2010dense}).}

{To demonstrate our argument we implemented APCG {and SVRG} and compared {them} with FISTA \cite{beck2009fast} and our method - MAGMA. We run all {four} algorithms for a fixed time and compare the achieved function values. For APCG we tried three different block size strategies, namely $1$, $10$ and $50$. While $10$ and $50$ usually performed similarly, $1$ was exceptionally slow, so we report the best achieved results for all experiments.} {For SVRG one has to choose a fixed step size as well as a number of inner iterations. We tuned both parameters to achieve the best possible results for each particular problem.}

{We performed the experiments on $5$ databases: the $3$ reported in the previous subsection (with $440$, $944$ and $8824$ images of $200\times 200$ dimension) and two ``databases" with data generated uniformly at random with dimensions $40,000\times 100$ and $40,000\times 8,000$. The later experiments are an extreme case where full gradient methods (FISTA) are outperformed by {both} APCG and {SVRG} when on standard $\ell_1$-regularized least squared problems, while MAGMA is not applicable. The results are given in Figure \ref{f:apcg_fista}.} { As we can see from Figures \ref{p:dic-2-val-nobucket} and \ref{p:dic-1-val-nobucket} all three methods can achieve low objective values very quickly for standard $\ell_1$ regularized least squared problems. However, after adding an identity matrix to the right of database (dense error correction model) the performance of all algorithms changes: they all become significantly slower due to larger problem dimensions (Figures \ref{p:dic-2-val} and \ref{p:dic-1-val}). Most noticeably SVRG stagnates after first one-two iterations. For the smaller problem (Figure \ref{p:dic-2-val}) FISTA is the fastest to converge to a minimizer, but for the larger problem (Figure \ref{p:dic-1-val}) APCG performs best. The picture is quite similar when looking at optimality conditions instead of objective function values in Figures \ref{p:dic-2-crit-nobucket} to \ref{p:dic-1-crit}: all three algorithms, especially SVRG are slower on the dense error correction model and for the largest problem APCG performs best.}

{This demonstrates that for $\ell_1$-regularized least squares problems {partial gradient methods} with a random dictionary can often be faster than accelerated full gradient methods. However, for problems with highly correlated dictionaries and  {noisy data}, such as the  {robust} face recognition problem, the picture is quite different.}

\begin{figure}
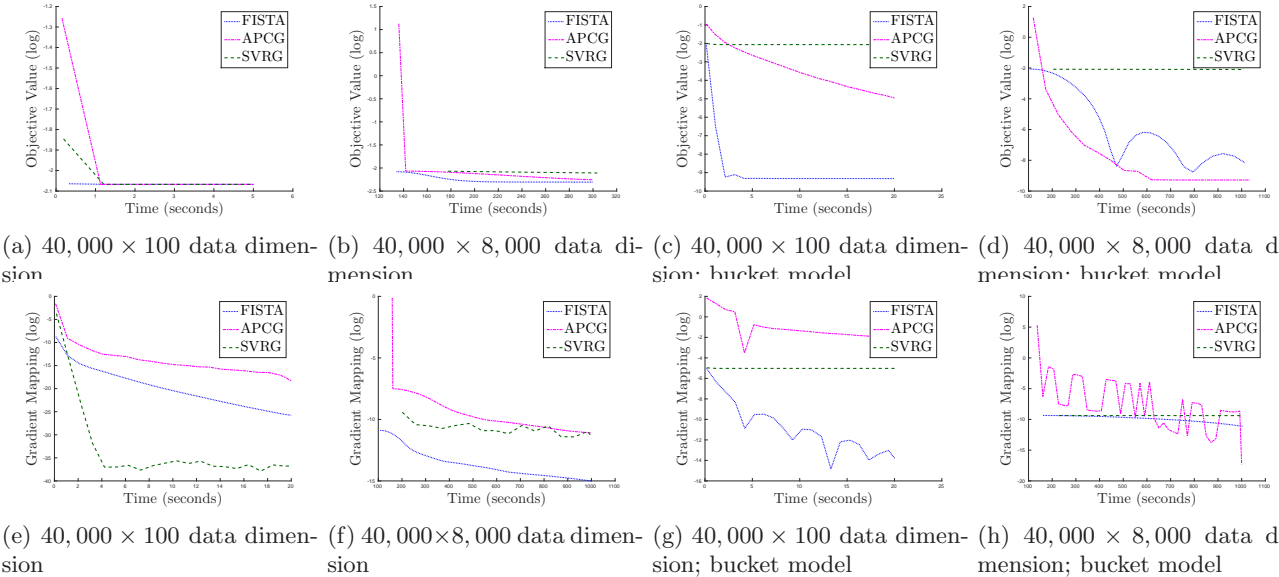

	\centering
	\begin{subfigure}[b]{0.24\textwidth}
	    \centering
		\includegraphics[scale=0.2]{dic-2-val-nobucket}
		\caption{$40,000\times 100$ data dimension}
		\label{p:dic-2-val-nobucket}
	\end{subfigure}
	\begin{subfigure}[b]{0.24\textwidth}
	    \centering
		\includegraphics[scale=0.2]{dic-1-val-nobucket}
		\caption{$40,000\times 8,000$ data dimension}
		\label{p:dic-1-val-nobucket}
	\end{subfigure}
	\begin{subfigure}[b]{0.24\textwidth}
	    \centering
		\includegraphics[scale=0.2]{dic-2-val}
		\caption{$40,000\times 100$ data dimension; bucket model}
		\label{p:dic-2-val}
	\end{subfigure}
	\begin{subfigure}[b]{0.24\textwidth}
	    \centering
		\includegraphics[scale=0.2]{dic-1-val}
		\caption{$40,000\times 8,000$ data dimension; bucket model}
		\label{p:dic-1-val}
	\end{subfigure}

	\begin{subfigure}[b]{0.24\textwidth}
	    \centering
		\includegraphics[scale=0.2]{dic-2-crit-nobucket}
		\caption{$40,000\times 100$ data dimension}
		\label{p:dic-2-crit-nobucket}
	\end{subfigure}
	\begin{subfigure}[b]{0.24\textwidth}
	    \centering
		\includegraphics[scale=0.2]{dic-1-crit-nobucket}
		\caption{$40,000\times 8,000$ data dimension}
		\label{p:dic-1-crit-nobucket}
	\end{subfigure}
	\begin{subfigure}[b]{0.24\textwidth}
	    \centering
		\includegraphics[scale=0.2]{dic-2-crit}
		\caption{$40,000\times 100$ data dimension; bucket model}
		\label{p:dic-2-crit}
	\end{subfigure}
	\begin{subfigure}[b]{0.24\textwidth}
	    \centering
		\includegraphics[scale=0.2]{dic-1-crit}
		\caption{$40,000\times 8,000$ data dimension; bucket model}
		\label{p:dic-1-crit}
	\end{subfigure}
	\caption{{Comparing FISTA, APCG and {SVRG} after running them for a fixed amount of time {with ((c), (d), (g) and (h)) and without ((a), (b), (e) and (f)) the bucket model} on randomly generated data.} {The first row contains sequences of the objective values and the bottom row contains optimality criteria, i.e gradient mappings.}}
	\label{f:apcg_fista}
\end{figure}

{Having established that APCG {and SVRG are} superior than FISTA for random data {with no gross errors}, we turn our attention to the problem at hand, i.e. the {robust} face recognition problem.} {First we run FISTA, APCG, SVRG and MAGMA on problems with no noise and thus the bucket model (\ref{eq:L1regLS-CAB}) is not used. We run all algorithms for $100$ seconds on a problem with $n=440$ images of $m=200\times 200=40,000$ dimension in the database. As the results in Figure \ref{f:fista-svrg-magma} show all algorithms achieve very similar solutions of good quality with APCG resulting in obviously less sparse solution (Figure \ref{p:apcg}).}

\begin{figure}
	\centering
	\begin{subfigure}[b]{0.19\textwidth}
	    \centering
		\includegraphics[scale=0.25]{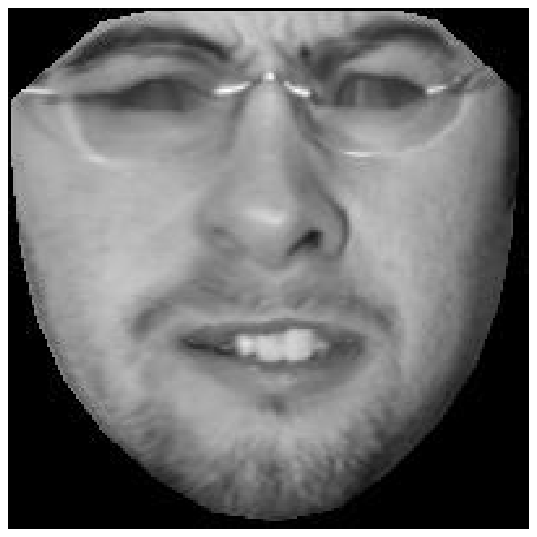}
		\caption{Original}
		\label{p:orig}
	\end{subfigure}
	\begin{subfigure}[b]{0.19\textwidth}
	    \centering
		\includegraphics[scale=0.25]{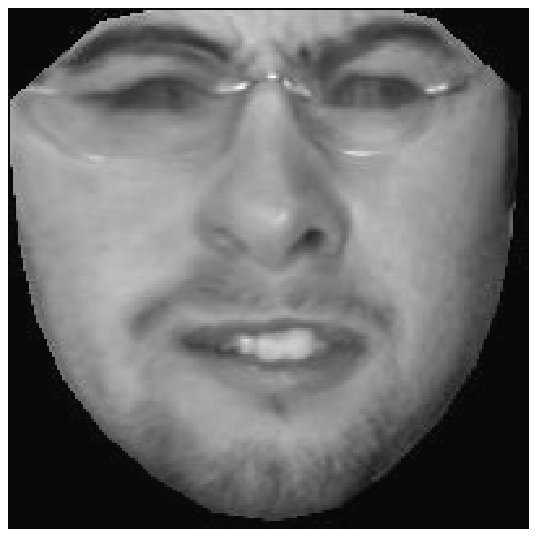}
		\caption{FISTA}
		\label{p:fista-img}
	\end{subfigure}
	\begin{subfigure}[b]{0.19\textwidth}
	    \centering
		\includegraphics[scale=0.25]{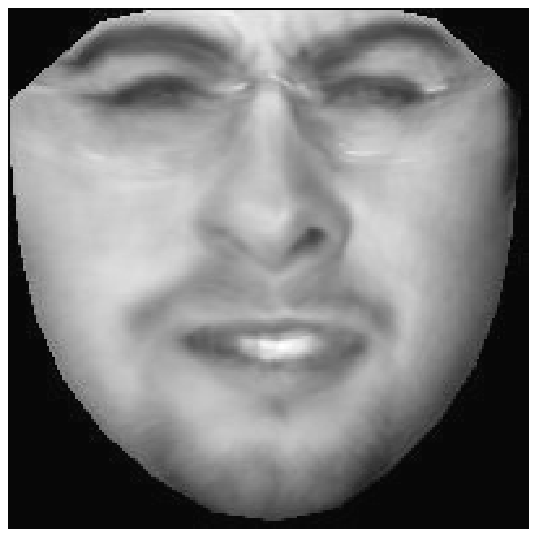}
		\caption{APCG}
		\label{p:apcg-img}
	\end{subfigure}
	\begin{subfigure}[b]{0.19\textwidth}
	    \centering
		\includegraphics[scale=0.25]{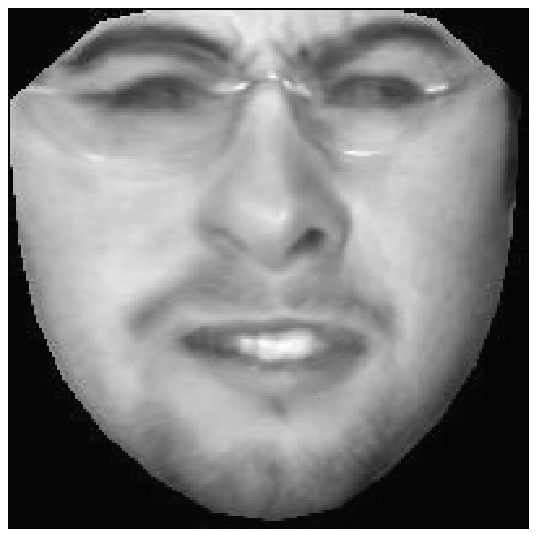}
		\caption{SVRG}
		\label{p:svrg-img}
	\end{subfigure}
	\begin{subfigure}[b]{0.19\textwidth}
	    \centering
		\includegraphics[scale=0.25]{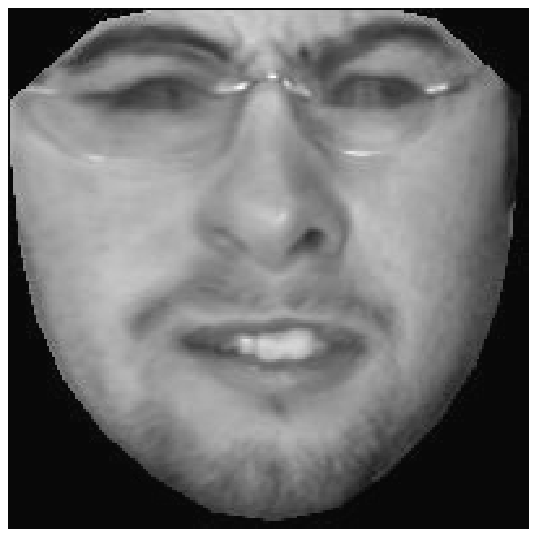}
		\caption{MAGMA}
		\label{p:magma-img}
	\end{subfigure}
	
	\begin{subfigure}[b]{0.24\textwidth}
	    \centering
		\includegraphics[scale=0.25]{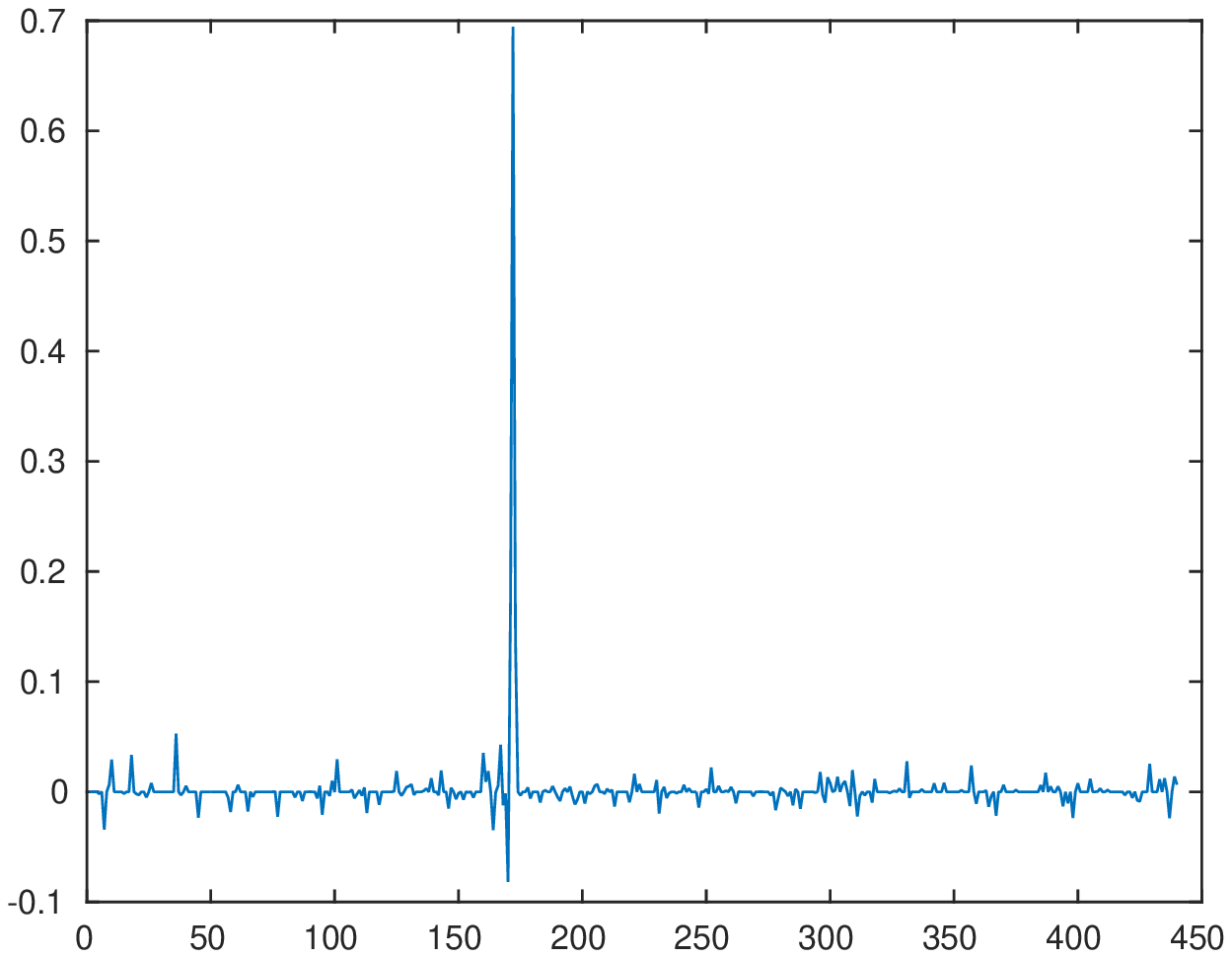}
		\caption{FISTA}
		\label{p:fista}
	\end{subfigure}
	\begin{subfigure}[b]{0.24\textwidth}
	    \centering
		\includegraphics[scale=0.25]{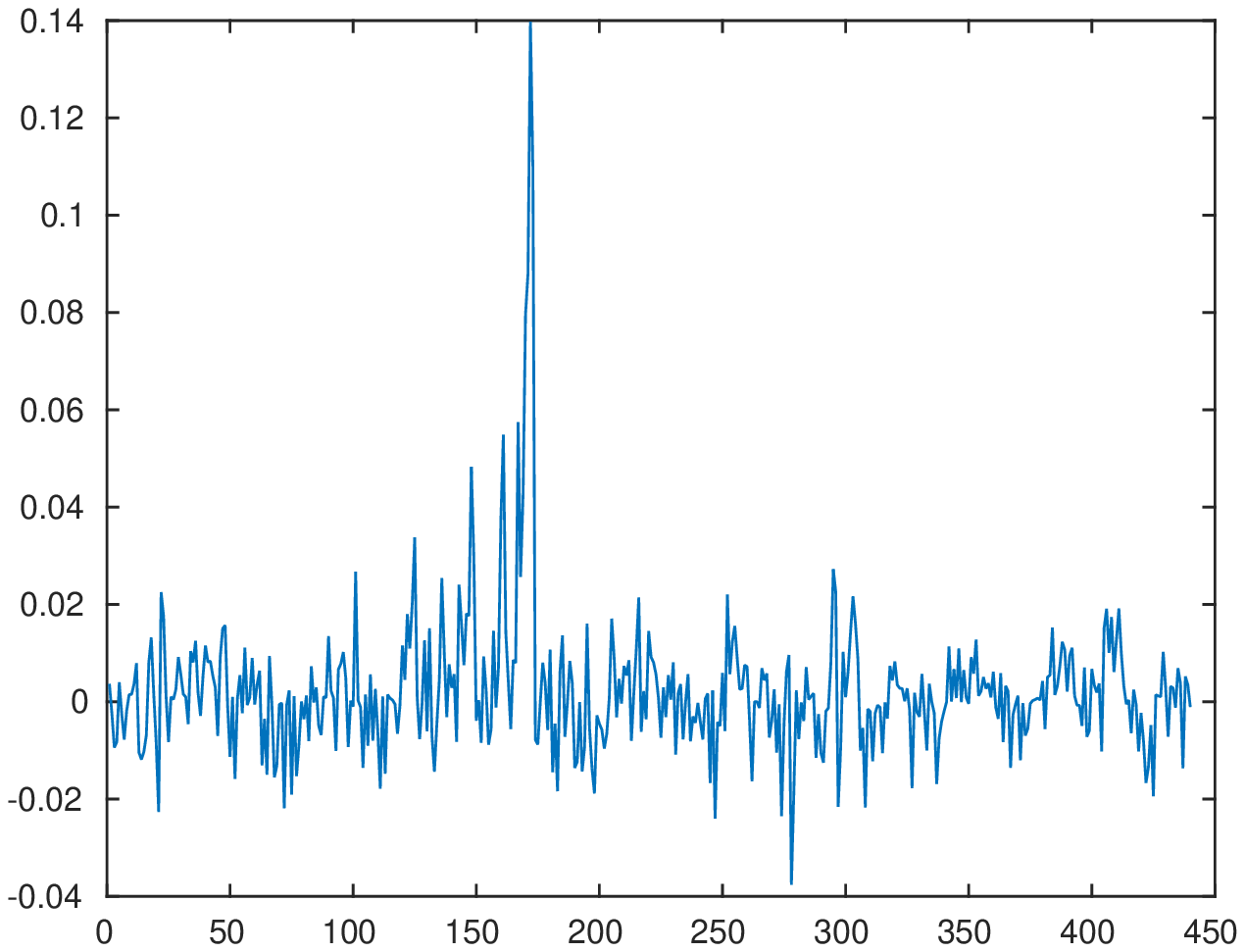}
		\caption{APCG}
		\label{p:apcg}
	\end{subfigure}
	\begin{subfigure}[b]{0.24\textwidth}
	    \centering
		\includegraphics[scale=0.25]{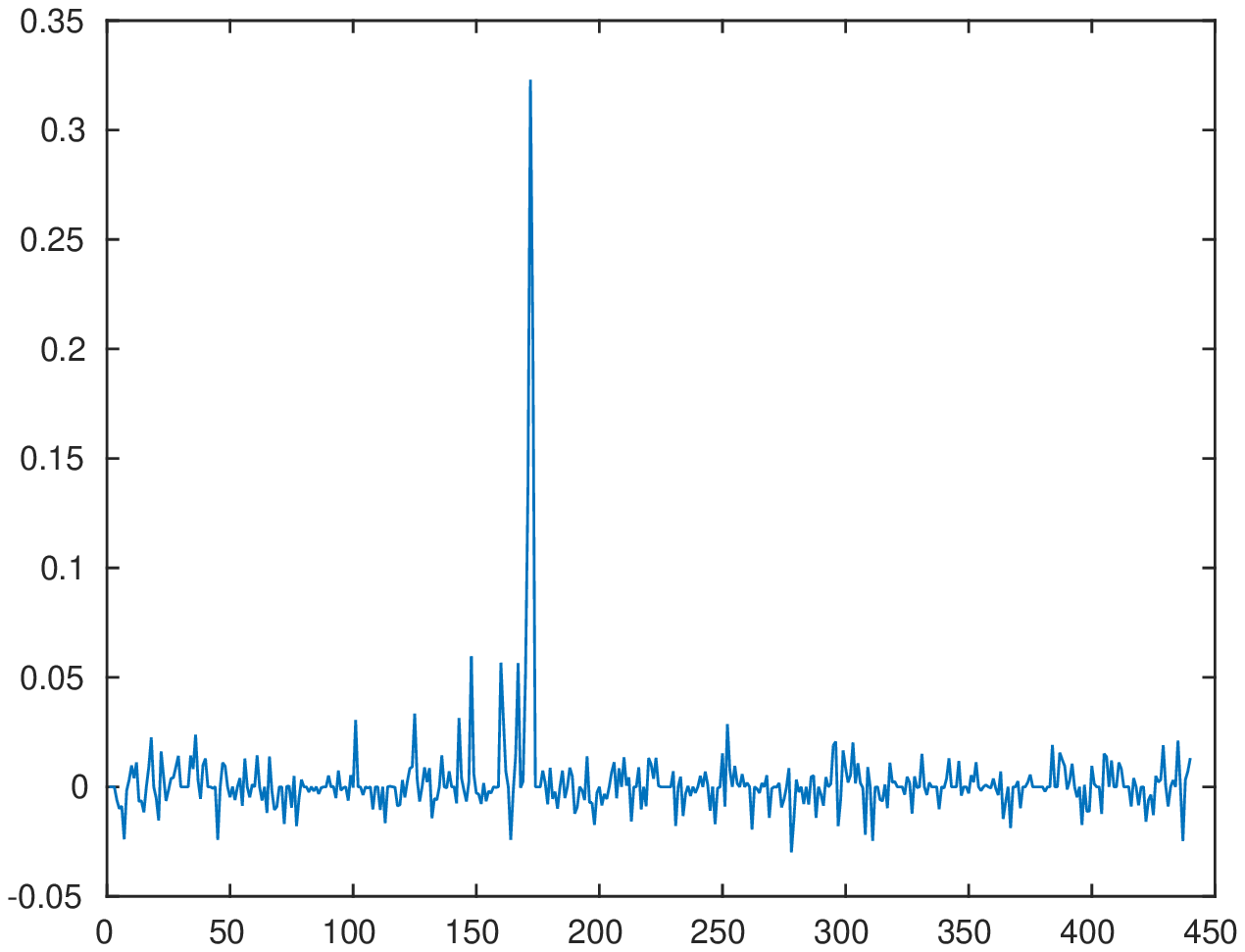}
		\caption{SVRG}
		\label{p:svrg}
	\end{subfigure}
	\begin{subfigure}[b]{0.24\textwidth}
	    \centering
		\includegraphics[scale=0.25]{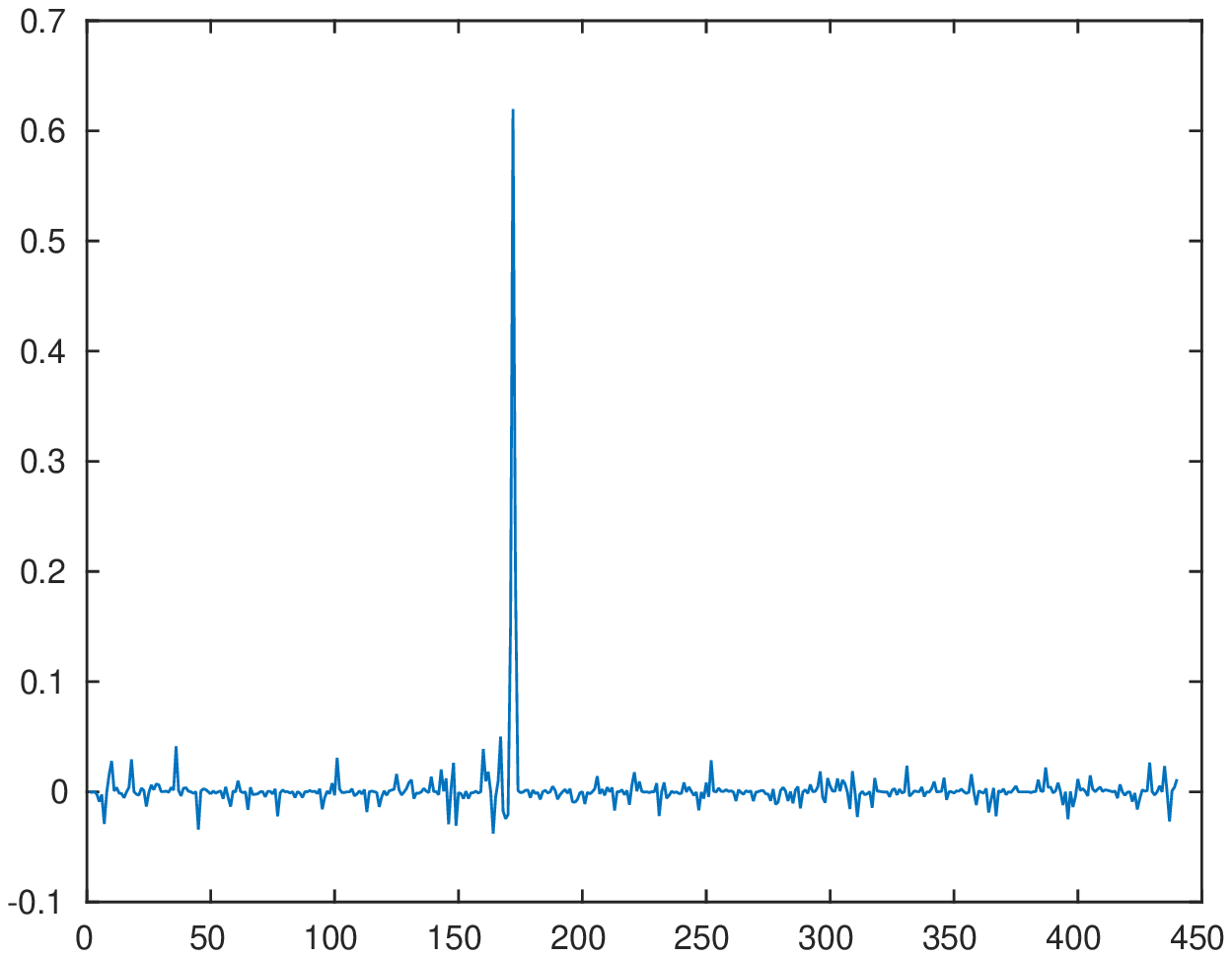}
		\caption{MAGMA}
		\label{p:magma}
	\end{subfigure}
	\caption{{Result of FISTA, APCG, SVRG and MAGMA in an image decomposition example without occlusion. Top row ((a) to (e))  contains the original input image and reconstructed images from each algorithm. The bottom row ((f) to (i)) contains the solutions $\mathbf{x}^\star$ from each corresponding algorithm.}}
	\label{f:fista-svrg-magma}
\end{figure}

{Then we run all the tested algorithms on the same problem, but this time we simulated occlusion by adding a black square on the incoming image. Thus, we need to solve the dense error correction optimisation problem (\ref{eq:L1regLS-CAB}). The results are shown in Figure \ref{f:fista-svrg-magma-occ}. The top row contains the original occluded image and reconstructed images by each algorithm and the middle row contains reconstructed noises as determined by each corresponding algorithm. FISTA (Figure \ref{p:fista-img-occ}) and MAGMA (Figure \ref{p:magma-img-occ}) both correctly reconstruct the underlying image while putting noise (illumination changes and gross corruptions) into the error part (Figures \ref{p:fista-err} and \ref{p:magma-err}). APCG (Figure \ref{p:apcg-img-occ}), on the other hand, reconstructs the true image together with the noise into the error part (Figure \ref{p:apcg-err}), as if the sought person does not have images in the database. SVRG seems to find the correct image in the database, however it fails to separate noise from the true image (Figures \ref{p:svrg-img-occ} and \ref{p:svrg-err}). We also report the reconstruction vectors $\mathbf{x}^\star$ as return by each algorithm in the bottom row. FISTA (Figure \ref{p:fista-occ}) and MAGMA (\ref{p:magma-occ}) both are fairly sparse with a clear spark indicating to the correct image in the database. SVRG (Figure \ref{p:svrg-occ}) also has a minor spark indicating to the correct person in the database, however it is not sparse, since it could not separate the noise. The result from APCG (Figure \ref{p:apcg-occ}) is the worst, since it is not sparse and indicates to multiple different images in the database, thus failing in the face recognition task.}

{Note that this is not a specifically designed case, in fact, SVRG, APCG and other algorithms that use partial information per iteration might suffer from this problem. Indeed, APCG uses one or a few columns of $\mathbf{A}$ at a time, thus effectively transforming the original large database into many tiny ones, which cannot result in good face recognition. SVRG, on the other hand, uses one row of $\mathbf{A}$ and all of variable $\mathbf{x}$ at a time for all iterations, which allows it to solve the recognition task correctly. However, when the dense error correction optimisation problem in (\ref{eq:L1regLS-CAB}) is used the minibatch approach results in using only one row of the identity matrix with each corresponding row of $\mathbf{A}$ and one coordinate from variable $\mathbf{e}$ together with $\mathbf{x}$, therefore resulting in poor performance when there is noise present.}

\begin{figure}
	\centering
	\begin{subfigure}[b]{0.19\textwidth}
	    \centering
		\includegraphics[scale=0.25]{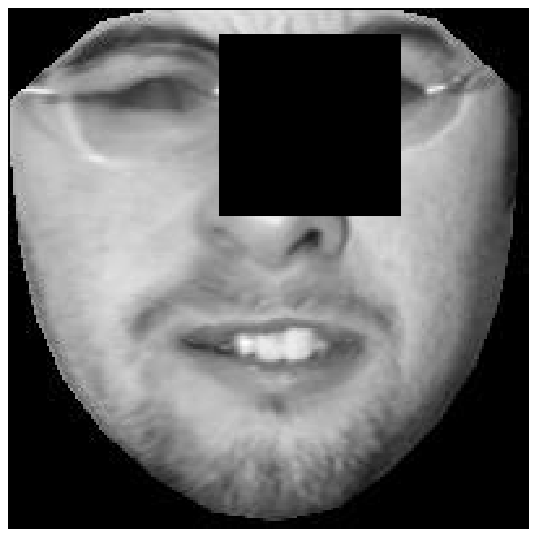}
		\caption{Original}
		\label{p:orig-occ}
	\end{subfigure}
	\begin{subfigure}[b]{0.19\textwidth}
	    \centering
		\includegraphics[scale=0.25]{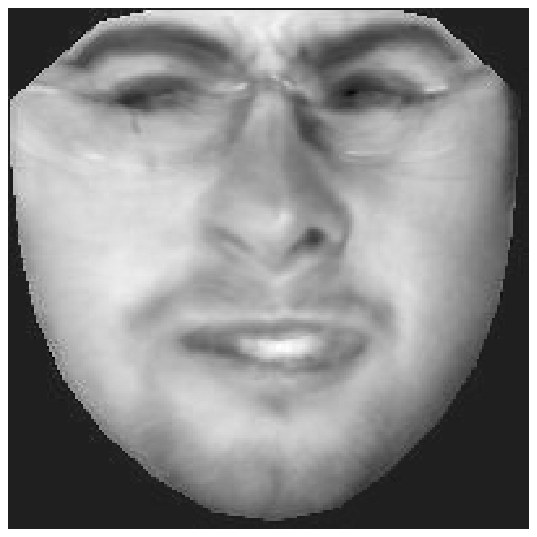}
		\caption{FISTA}
		\label{p:fista-img-occ}
	\end{subfigure}
	\begin{subfigure}[b]{0.19\textwidth}
	    \centering
		\includegraphics[scale=0.25]{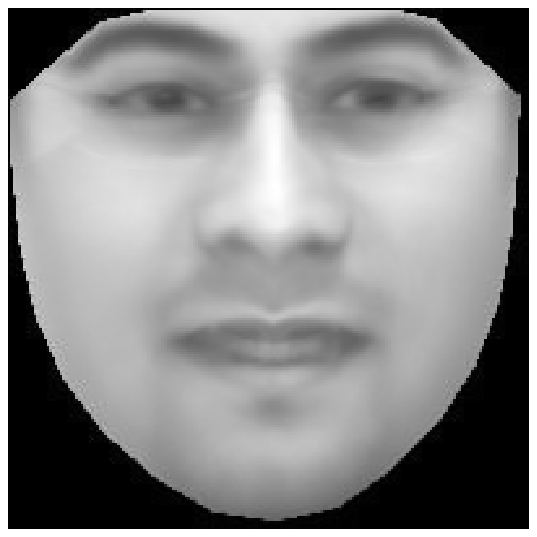}
		\caption{APCG}
		\label{p:apcg-img-occ}
	\end{subfigure}
	\begin{subfigure}[b]{0.19\textwidth}
	    \centering
		\includegraphics[scale=0.25]{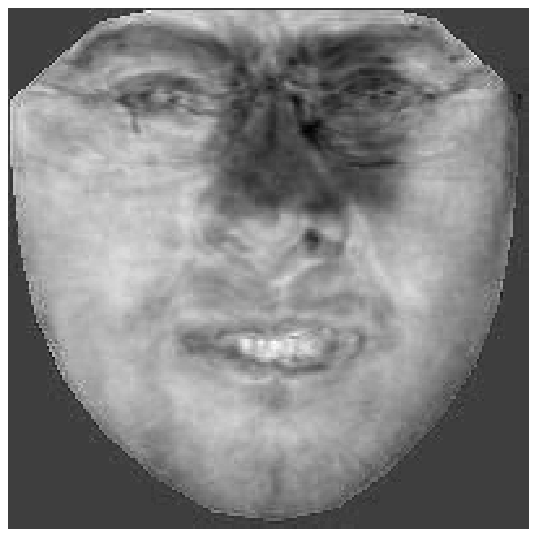}
		\caption{SVRG}
		\label{p:svrg-img-occ}
	\end{subfigure}
	\begin{subfigure}[b]{0.19\textwidth}
	    \centering
		\includegraphics[scale=0.25]{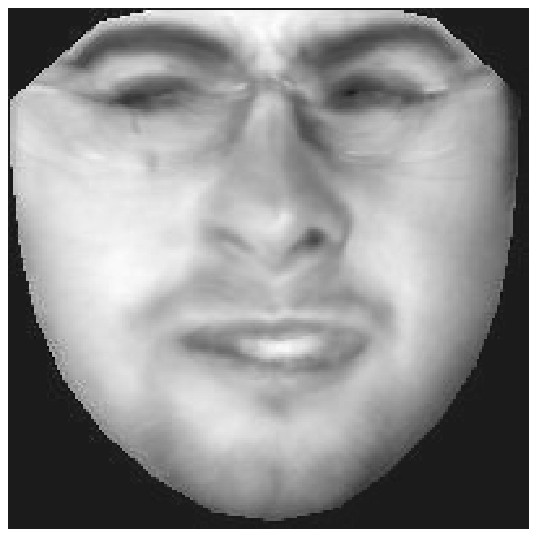}
		\caption{MAGMA}
		\label{p:magma-img-occ}
	\end{subfigure}
	
	\begin{subfigure}[b]{0.2\textwidth}
	    \centering
		\includegraphics[scale=0.25]{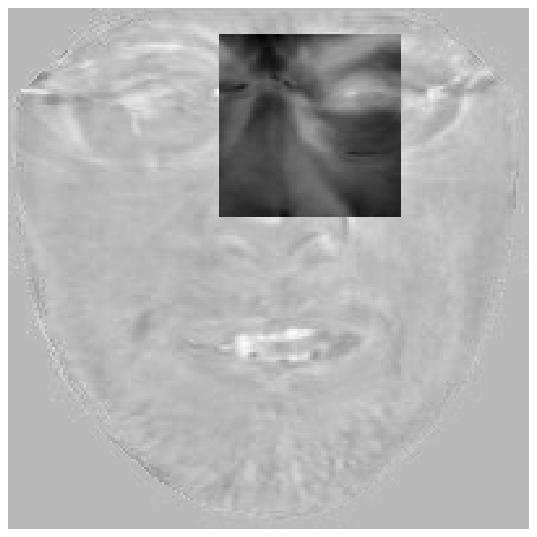}
		\caption{FISTA}
		\label{p:fista-err}
	\end{subfigure}
	\begin{subfigure}[b]{0.2\textwidth}
	    \centering
		\includegraphics[scale=0.25]{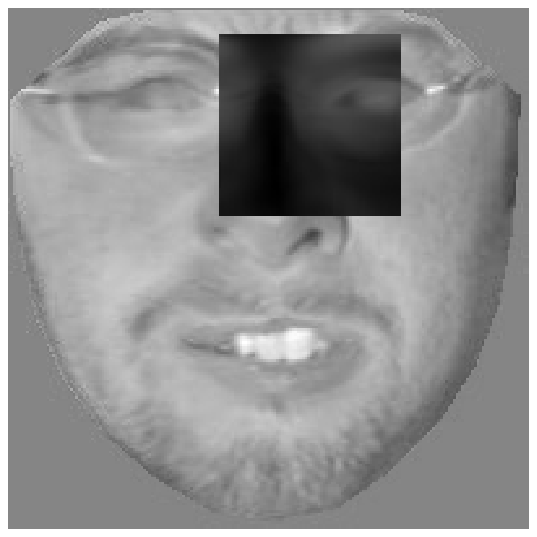}
		\caption{APCG}
		\label{p:apcg-err}
	\end{subfigure}
	\begin{subfigure}[b]{0.2\textwidth}
	    \centering
		\includegraphics[scale=0.25]{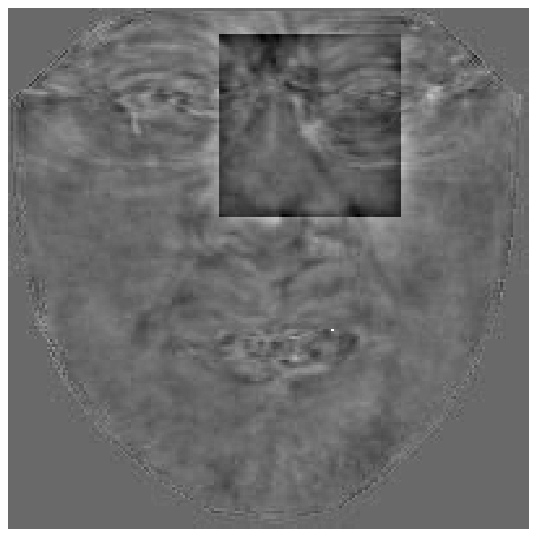}
		\caption{SVRG}
		\label{p:svrg-err}
	\end{subfigure}
	\begin{subfigure}[b]{0.2\textwidth}
	    \centering
		\includegraphics[scale=0.25]{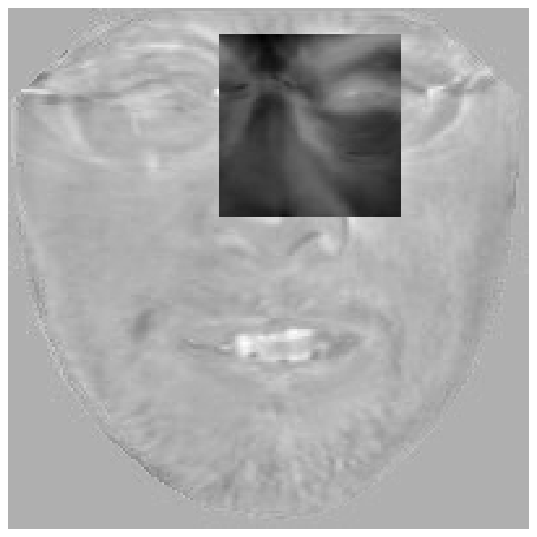}
		\caption{MAGMA}
		\label{p:magma-err}
	\end{subfigure}
	
	\begin{subfigure}[b]{0.24\textwidth}
	    \centering
		\includegraphics[scale=0.25]{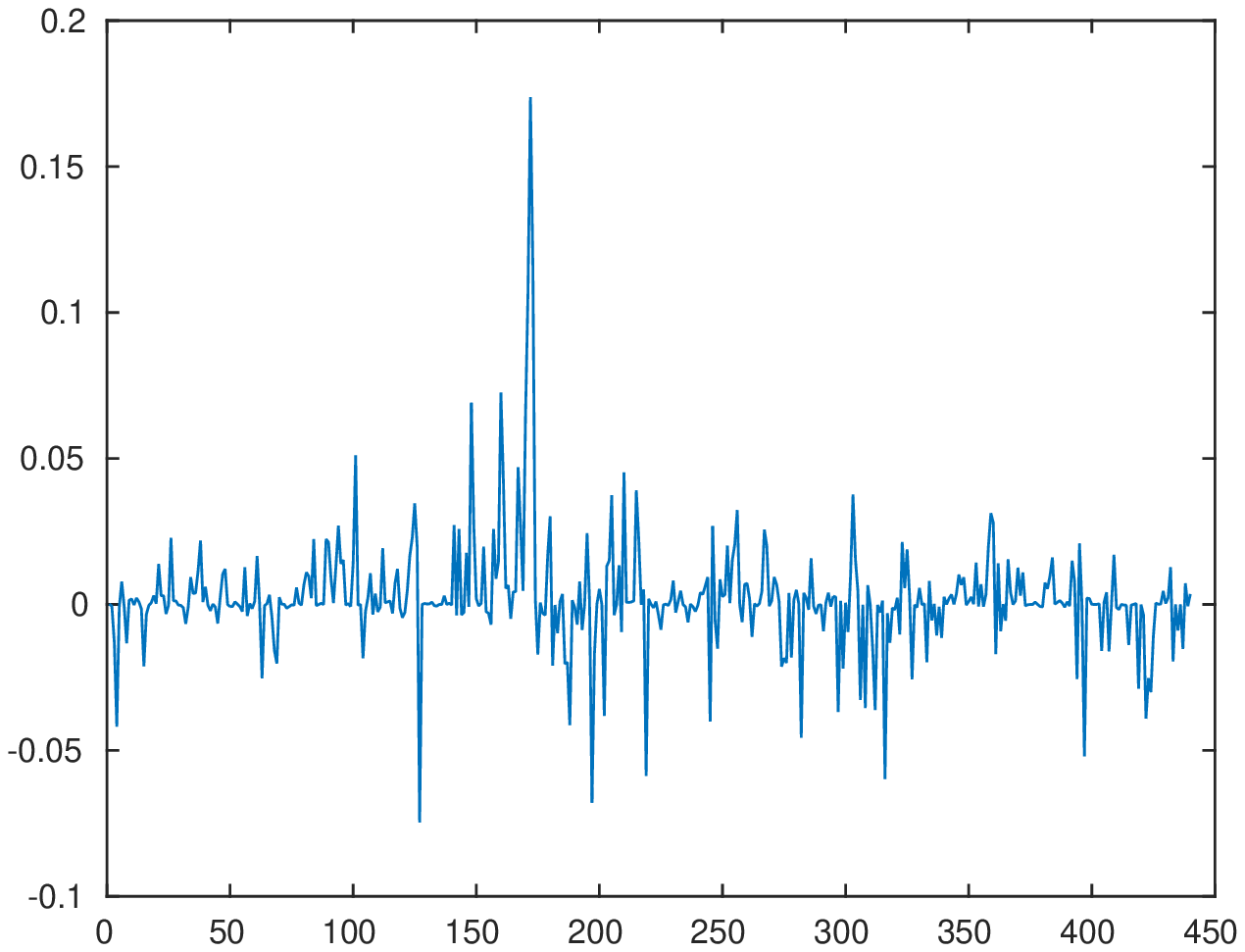}
		\caption{FISTA}
		\label{p:fista-occ}
	\end{subfigure}
	\begin{subfigure}[b]{0.24\textwidth}
	    \centering
		\includegraphics[scale=0.25]{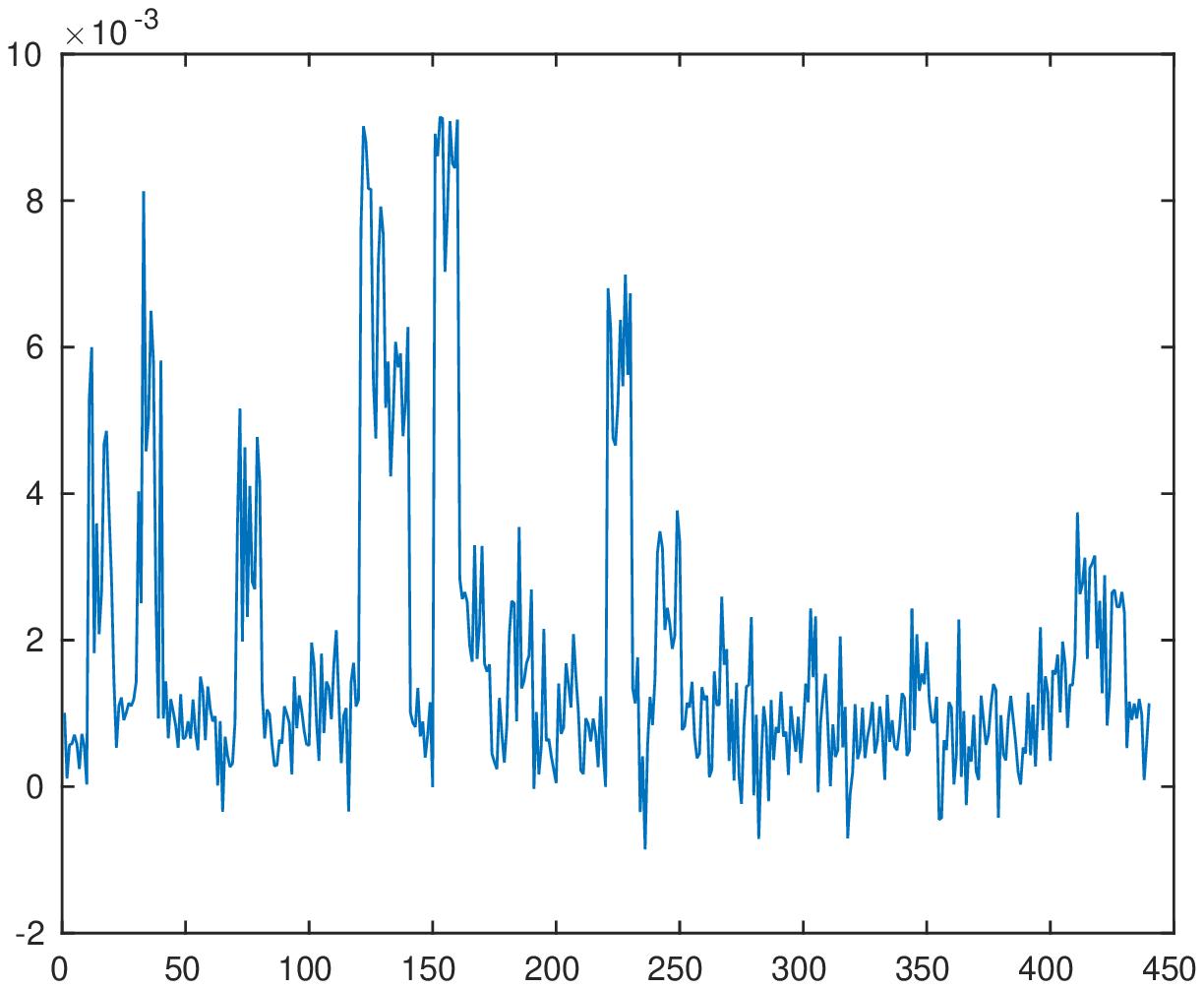}
		\caption{APCG}
		\label{p:apcg-occ}
	\end{subfigure}
	\begin{subfigure}[b]{0.24\textwidth}
	    \centering
		\includegraphics[scale=0.25]{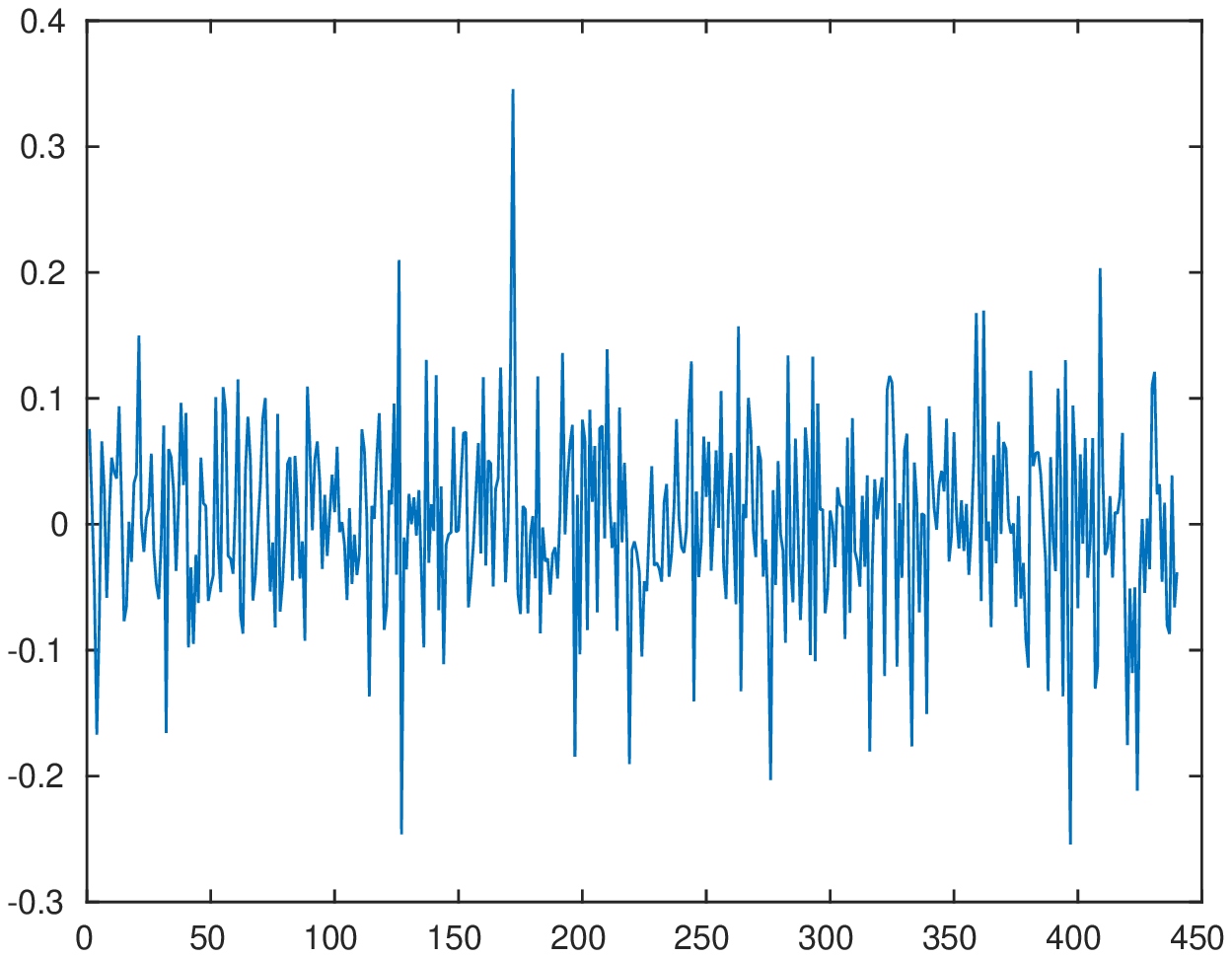}
		\caption{SVRG}
		\label{p:svrg-occ}
	\end{subfigure}
	\begin{subfigure}[b]{0.24\textwidth}
	    \centering
		\includegraphics[scale=0.25]{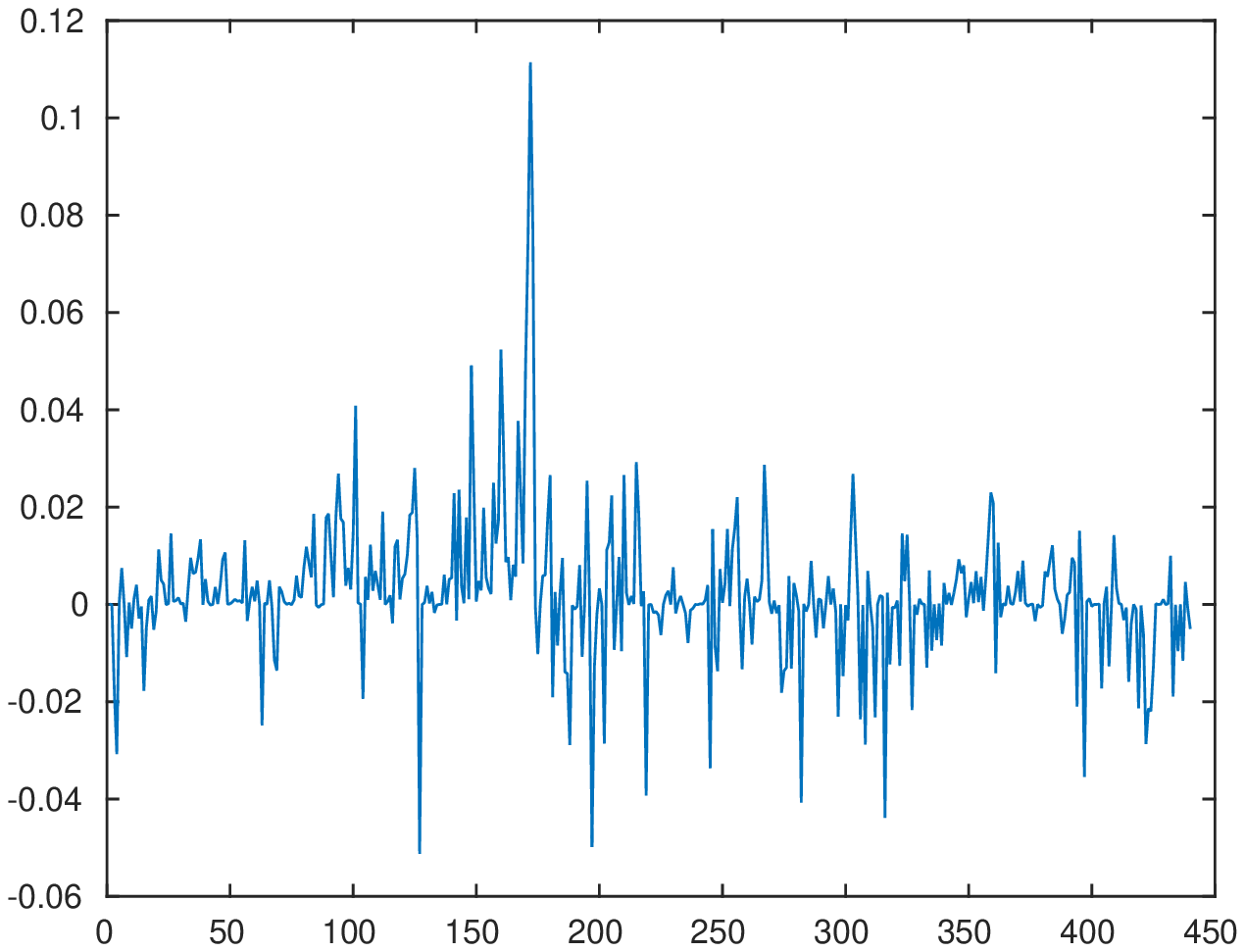}
		\caption{MAGMA}
		\label{p:magma-occ}
	\end{subfigure}
	\caption{{Result of FISTA, APCG, SVRG and MAGMA in an image decomposition example with corruption. Top row ((a)-(e)) contains the original occluded image and reconstructed images from each algorithm. The middle row ((f)-(i)) contains the reconstructed noise ($\mathbf{e}^\star$ reshaped as an image) as determined by corresponding algorithms. The bottom row ((j)-(m)) contains the solutions $\mathbf{x}^\star$ from each corresponding algorithm.}}
	\label{f:fista-svrg-magma-occ}
\end{figure}

{To further investigate the convergence properties of the discussed algorithms, we plot the objective function values achieved by APCG, {SVRG,} FISTA and MAGMA on problems with $440$, $944$ and $8824$ images in the database in Figure \ref{f:apcg_fista_magma} \footnote{For all experiments we use the same standard parameters.}. As we can see from Figures \ref{p:dic1} and \ref{p:dic2} APCG is slower than FISTA for smaller problems and slightly outperforms it only for the largest dictionary (Figure \ref{p:dic3}). SVRG on the other hand quickly drops in function after one-two iterations, but then stagnates. MAGMA is significantly faster than all three algorithms in all experiments.}

{However, looking at the objective function value alone can be deceiving. In order to obtain a full picture of the performance of the algorithms for the face recognition task, one has to look at the optimality conditions and the sparsity of the obtained solution. In order to test it, we run FISTA, APCG, {SVRG} and MAGMA until the first order condition is satisfied with $\epsilon=10^{-6}$ error. As the plots in Figure \ref{f:apcg_fista_magma-crit} indicate APCG and {SVRG} are slower to achieve low convergence criteria. Furthermore, as Table \ref{t:converge-times} shows they also produce denser solutions with higher $\ell_1$-norm.}

\begin{figure}
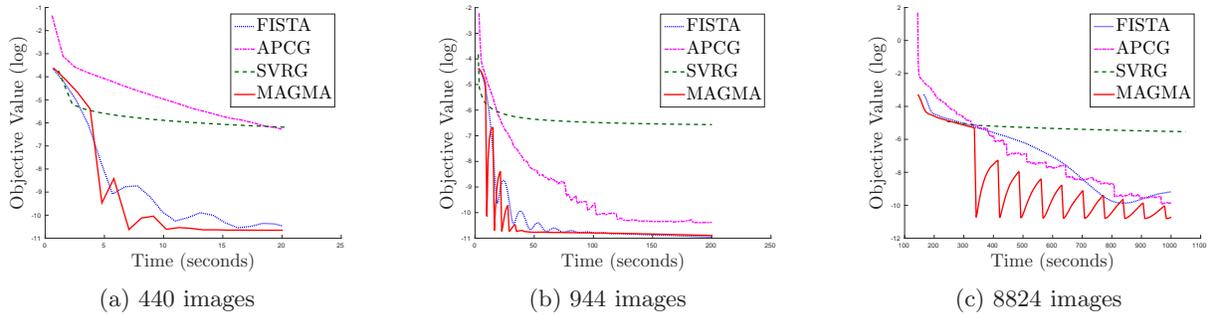

	\centering
	\begin{subfigure}[b]{0.32\textwidth}
	    \centering
		\includegraphics[scale=0.25]{dic1-val}
		\caption{$440$ images}
		\label{p:dic1}
	\end{subfigure}
	\begin{subfigure}[b]{0.32\textwidth}
	    \centering
		\includegraphics[scale=0.25]{dic2-val}
		\caption{$944$ images}
		\label{p:dic2}
	\end{subfigure}
	\begin{subfigure}[b]{0.32\textwidth}
	    \centering
		\includegraphics[scale=0.25]{dic3-val}
		\caption{$8824$ images}
		\label{p:dic3}
	\end{subfigure}
	\caption{{Comparing the objective values of FISTA, APCG, {SVRG} and MAGMA after running them for a fixed amount of time on dictionaries of various sizes.}}
	\label{f:apcg_fista_magma}
\end{figure}

\begin{figure}
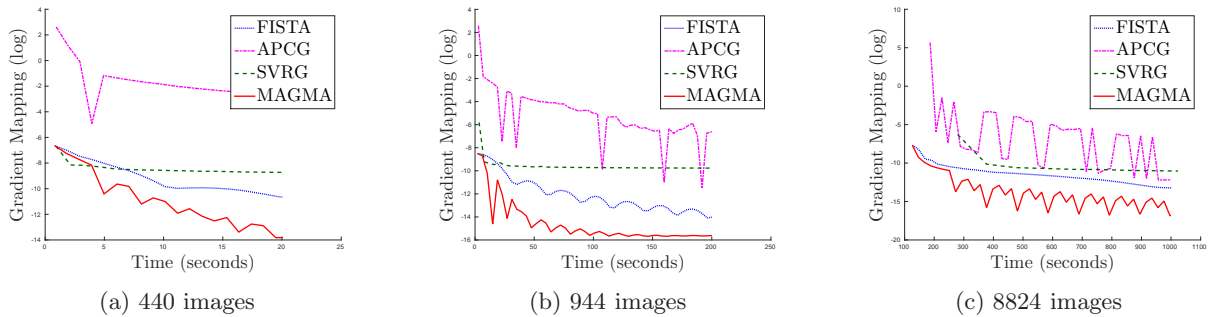

	\centering
	\begin{subfigure}[b]{0.32\textwidth}
	    \centering
		\includegraphics[scale=0.25]{dic1-crit}
		\caption{$440$ images}
		\label{p:dic1-crit}
	\end{subfigure}
	\begin{subfigure}[b]{0.32\textwidth}
	    \centering
		\includegraphics[scale=0.25]{dic2-crit}
		\caption{$944$ images}
		\label{p:dic2-crit}
	\end{subfigure}
	\begin{subfigure}[b]{0.32\textwidth}
	    \centering
		\includegraphics[scale=0.25]{dic3-crit}
		\caption{$8824$ images}
		\label{p:dic3-crit}
	\end{subfigure}
	\caption{{Comparing the first order optimality criteria of FISTA, {SVRG}, APCG and MAGMA after running them for a fixed amount of time on dictionaries of various sizes.}}
	\label{f:apcg_fista_magma-crit}
\end{figure}

\begin{table}
\centering
\begin{tabular}{|c|c|c|c|}
\hline 
Method & CPU Time (seconds) & Number of iterations & $\Vert\mathbf{x}^\star \Vert_1$ \\ 
\hline
FISTA & $1690$ & $554$ & $3.98$ \\
\hline
APCG & $680$ & $68,363$ (block size $10$) & $6.67$ \\
\hline
{SVRG} & $\geq 10,000$ & $\geq 93$ (epoch length $17,648$) & $5.19$ \\
\hline
MAGMA & $526$ & $144$ fine and $120$ coarse & $3.09$ \\
\hline
\end{tabular}
\caption{{Running FISTA, APCG, {SVRG} and MAGMA on the largest dictionary (with $8824$ images) until convergence with $\epsilon=10^{-6}$ accuracy or maximum time of $10,000$ CPU seconds returning $\mathbf{x}^*$ as a solution.}}
\label{t:converge-times}
\end{table}

\section{Conclusion and Discussion}

In this work we presented a novel accelerated multi-level algorithm - MAGMA, for solving convex composite problems. We showed that in theory our algorithm has an optimal convergence rate of $\mathcal{O}(1/\sqrt{\epsilon})$, where $\epsilon$ is the accuracy. To the best of our knowledge this is the first multi-level algorithm with optimal convergence rate. Furthermore, we demonstrated on several large-scale face recognition problems that in practice MAGMA can be up to $10$ times faster than the state of the art methods.

The promising results shown here are encouraging, and justify the use of MAGMA in other applications. MAGMA can be used to solve composite problems with two non-smooth parts. Another approach for applying FISTA on a problem with two non-smooth parts was given in \cite{orabona2012prisma}. This problem setting is particularly attractive, because of its numerous applications, including Robust PCA \cite{candes2011robust}. MAGMA's extension for solving computer vision applications of Robust PCA (such as image alignment \cite{peng2012rasl}) is an interesting direction for future research.

In terms of theory, we note that even though we prove that MAGMA has the same worst case asymptotic convergence rate as Nesterov's accelerated methods, it would be interesting to see, what conditions (we expect it to be high correlation of the columns of $\mathbf{A}$) imposed on the problem ensure that MAGMA has a better convergence rate or at least a strictly better constant factor. This could also suggest an automatic way for choosing parameters $\kappa$ and $K_d$ optimally and a closed form solution for the step size $s_k$.

\section*{Acknowledgements}
The authors would like to express sincere appreciation to the two anonymous referees for their useful suggestions and for having drawn the authors' attention to additional relevant literature. The first author's work was partially supported by \href{http://www.luys.am/}{Luys} foundation. The work of the second author was partially supported by EPSRC grants EP/M028240, EP/K040723 and an FP7 Marie Curie Career Integration Grant (PCIG11-GA-2012-321698 SOC-MP-ES). The work of S. Zafeiriou was partially funded by EPSRC project FACER2VM (EP/N007743/1).

\bibliographystyle{plain}
\bibliography{magma}

\end{document}